\newtheorem{theorem}{Theorem}[section]
\newtheorem{corollary}[theorem]{Corollary}
\newtheorem{lemma}[theorem]{Lemma}
\theoremstyle{definition}
\newtheorem{remark}[theorem]{Remark}
\newcommand{\R}{\mathbb{R}}
\newcommand{\N}{\mathbb{N}}
\newcommand{\eps}{\varepsilon}
\newcommand{\BBB}{\color{black}} 
\newcommand{\UUU}{\color{black}} 
\newcommand{\MMM}{\color{black}}  
\newcommand{\EEE}{\color{black}} 
\newcommand{\RRR}{\color{black}}
\begin{document}

\title[A compactness result in $GSBV^p$]{A compactness result in $GSBV^p$ and applications to $\Gamma$-convergence for free discontinuity problems}

\keywords{variational fracture, free discontinuity problems, functions of bounded variation, piecewise Poincar\'e inequality,  $\Gamma$-convergence, homogenization}

\author{Manuel Friedrich}
\address[Manuel Friedrich]{Applied Mathematics M\"unster, University of M\"unster\\
Einsteinstrasse 62, 48149 M\"unster, Germany.}
\email{manuel.friedrich@uni-muenster.de}
\urladdr{https://www.uni-muenster.de/AMM/Friedrich/index.shtml}

\date{\today}

\begin{abstract}
We present a compactness result  in the space $GSBV^p$ which extends the classical statement due to Ambrosio \cite{Ambrosio:90} to \BBB problems \EEE without a priori bounds on the functions. As an application, we revisit the $\Gamma$-convergence results for free discontinuity \BBB functionals \EEE established recently by Cagnetti, Dal Maso, Scardia, and Zeppieri \cite{Caterina}. We investigate  sequences of boundary value problems and show   convergence of minimum values and minimizers.
\end{abstract}

\subjclass[2010]{ 49J45, 49Q20, 70G75, 74Q05, 74R10.} 
\maketitle

\section{Introduction}

Since the pioneering work of {\sc Griffith} \cite{Griffith:1921}, the propagation of crack is viewed as the result of a competition between elastic energy stored in the uncracked region of a body and dissipation related to an infinitesimal increase of the crack. It is the fundamental idea in the approach to quasistatic crack evolution by {\sc Francfort and Marigo} \cite{Francfort-Marigo:1998}  and has led to a variety of variational models, where the displacements and the 
  (a priori unknown) crack paths are determined from an energy minimization principle. (Among the vast body of literature, we mention here only  the brittle fracture models for small strains \cite{BJ, Chambolle:2003, Francfort-Larsen:2003, Solombrino, Giacomini-Ponsiglione:2006} and finite strains \cite{DalMaso-Francfort-Toader:2005, DalGiac, DalMaso-Lazzaroni:2010}, and the cohesive models  \cite{CagnettiToader, CrismaleLazzaroniOrlando, DalMasoZanini}.) Problems of this form may be formulated in the frame of free discontinuity functionals
\begin{align}\label{mainfunctional}
E(u) = \int_{\Omega} f(x,\nabla u(x)) \, dx + \int_{J_u} g(x,[u](x),\nu_u(x)) \, d\mathcal{H}^{d-1}(x). 
\end{align}
Here, $\Omega \subset \R^d$ denotes the reference configuration, $\nabla u$ the deformation gradient, and $J_u$ the crack surface. The energy density $f$ accounts for elastic bulk terms for the unfractured region of the body, whereas the surface term assigns energy contributions on the crack paths comparable to the $(d-1)$-dimensional Hausdorff measure $\mathcal{H}^{d-1}(J_u)$ of the crack. 

In its simplest formulation, the density $g$ is a constant, called \emph{toughness} of the material, which is given by Griffith's criterion of fracture initiation (see \cite{Griffith:1921}).  Densities $g$ depending explicitly on the crack opening $[u]$ allow for modeling fracture  problems of cohesive-type \cite{barenblatt}.  Finally, the presence of the normal $\nu_u$ to the jump set $J_u$ and the material point $x$ take into account possible anisotropy and inhomogeneities in the body.

A basic and important question   is to prove the existence of minimizers for \eqref{mainfunctional} under appropriate Dirichlet boundary conditions. This requires a \emph{weak formulation} of the problem in the space of \emph{special functions of bounded variation} ($SBV$) (see \cite[Section 4]{Ambrosio-Fusco-Pallara:2000}).  In \cite{Ambrosio:90, Ambrosio:90-2}, lower semicontinuity for functionals of the form \eqref{mainfunctional} is characterized in terms of quasiconvexity for $f$ and $BV$-ellipticity  \cite{AmbrosioBraides2} for $g$.  Compactness of sequences  with bounded energy  is guaranteed by an \emph{a priori}  bound on the functions in $L^\infty$, see   \cite{Ambrosio89, Ambrosio-Fusco-Pallara:2000}.

The drawback of this compactness result is that it is unfortunately difficult to obtain such uniform bounds for a minimizing sequence, even if lower order terms are present in the energy. Only in the antiplane case \cite{Francfort-Larsen:2003} (namely when the displacement $u$ is scalar and $f$ is of the form $f(x,\xi) = |\xi|^2$), $L^\infty$-bounds may be obtained by  truncation, assuming that also the prescribed boundary values are bounded in $L^\infty$. If the boundary datum is only in some $L^p$ space or $f(x,0) > \min_{\xi} f(x,\xi)$, which is typically the case in finite elasticity, \BBB a truncation may change the boundary values or increase the energy. \EEE   

This issue may be partially overcome by formulating the problem in the larger space of \emph{generalized special functions of bounded variation} ($GSBV$). In this setting, one can rely on the compactness result for $GSBV$ with respect to convergence in measure (see \cite{Ambrosio:90, Ambrosio-Fusco-Pallara:2000}): it requires only a   very mild control on the functions  of the form $\int_\Omega \psi(|u|)\, dx \le C$ for some nonnegative \UUU and \EEE continuous $\psi$ with $\lim_{t \to \infty}\psi(t) = +\infty$. \UUU Adding  a lower order \emph{fidelity term} of this kind \EEE to the energy,  compactness and eventually the existence of minimizers are  guaranteed.

Let us mention that similar compactness issues arise when dealing with a sequence of free discontinuity problems $(E_k)_k$ of the form  \eqref{mainfunctional}. A classical example for this situation is the case of periodic homogenization. Here, the densities are    of the form  $f_k(x,\xi) = f(x/\eps_k,\xi)$ and   $g_k(x,\zeta,\nu) = g(x/\eps_k,\zeta,\nu)$, where $f,g$ are periodic in the first variable and $\eps_k$ describes the microscopical scale of a microstructure. The effective asymptotic behavior for such a sequence of fracture models in the finite strain framework   was studied by {\sc Braides, Defranceschi, and Vitali} \cite{BDV}  by means of $\Gamma$-convergence \cite{Braides:02, DalMaso:93}. In particular, they show   convergence of minimum values and minimizers for boundary value problems under an \textit{a priori} $L^\infty$-bound on the deformations. 

Very recently, a generalization of these results  for  sequences of densities $f_k$ and $g_k$ without any periodicity assumptions and under more general growth conditions has been derived by {\sc Cagnetti, Dal Maso, Scardia, and Zeppieri} \cite{Caterina}. (Actually, \UUU their work is  motivated by studying the case of \EEE stochastic homogenization \cite{CDMSZ2}.) Here,  besides the size of a microstructure, the parameter $k$ may also have other interpretations, such as the scale of a regularization of the energy or the ratio of the contrasting value of the mechanical response in a high-contrast medium. The convergence of minimizers is shown by including an $L^p$-fidelity term $\Vert u - h\Vert_{L^p(\Omega)}$ in the energy for a suitable datum $h$.

 We emphasize that, in contrast to the case of image reconstruction, a fidelity term
is  in general not appropriate in fracture mechanics. An investigation of the problem \eqref{mainfunctional} only involving boundary conditions, without a priori bounds on the \BBB configurations \EEE or applied body forces, is desirable and in accordance with the original formulation of the problem \cite[Section 2]{Francfort-Marigo:1998}.  The main difficulty lies in the fact that, for configurations with finite energy \eqref{mainfunctional}, small pieces of the body could be completely disconnected from the bulk part by the jump set $J_u$ and  the function $u$ could take arbitrarily large values on such small components. Eventually, this may \BBB   rule out \EEE measure convergence for minimizing sequences. It seems that only including a fidelity term in the energy can exclude such a phenomenon.

The issue of compactness results in variational fracture was recently tackled from a slightly different direction, namely via models in linearized elasticity. They are formulated in the space of \emph{generalized special functions of bounded deformation} ($GSBD$) introduced by {\sc Dal Maso} \cite{DalMaso:13}. Although in this setting only the symmetric part $e(u) = \frac{1}{2}((\nabla u)^T + \nabla u)$ of the strain is controlled, similar compactness results under a priori $L^\infty$-bounds or mild fidelity terms have been established in \cite{Bellettini-Coscia-DalMaso:98} and \cite{DalMaso:13}, respectively. Nevertheless, the problem is   more severe with respect to the $SBV$-case since truncation methods are not applicable and thus  already the simple situation $f(x,\nabla u) = |e(u)|^2$ with boundary data in $L^\infty$ is a delicate problem. 

\UUU
The recent paper \cite{Solombrino} provides the first compactness and existence result in $GSBD$ for the Griffith energy in dimension two without any a priori bounds or fidelity terms. \EEE A related result \cite{Friedrich:15-2} has been obtained in the passage from nonlinear-to-linear energies in brittle fracture by means of $\Gamma$-convergence \BBB (see also \cite{Schmidt} for a discrete-to-continuum analysis). \EEE As discussed before, arbitrary minimizing sequences are typically not compact when (small) pieces are \BBB completely \EEE disconnected by the jump set. The compactness result relies on the idea that a \BBB control \EEE on a sequence of functions can always be ensured by subtracting suitable piecewise  rigid motions. Using a \emph{piecewise Korn inequality} \cite{Friedrich:15-4, Solombrino}, it can be shown that such a modification can be performed without essentially increasing the energy of the configurations.

Very recently, a related compactness result in $GSBD$ in arbitrary space dimensions has been derived  by {\sc Chambolle and Crismale} \cite{Crismale}. Their strategy relies on a Korn-Poincar\'e inequality for functions with small jump set \cite{Chambolle-Conti-Francfort:2014} together with arguments in the spirit of \BBB Rellich's type \EEE compactness theorems. In contrast to \cite{Solombrino}, no passage to modifications of a sequence $(u_k)_k$ is necessary, at the expense \BBB of the fact \EEE that convergence to a limiting function $u$ is only guaranteed \emph{outside} $A:=\lbrace x \in \Omega: \, |u_k(x)| \to \infty \rbrace$. On the one hand, by setting $u= 0$ on $A$ (or affine), this is enough to identify $u$ as a minimizer for certain fracture problems, including Griffith energies \cite{Iu3, Iu1, Solombrino} or approximations \emph{\`a la} Ambrosio-Tortorelli \cite{AmbrosioTortorelli, Crismale2}. On the other hand, this strategy is not expedient if ${\rm argmin}_{\xi} f(x,\xi)$ is $x$-dependent and therefore excludes a variety of interesting energies, e.g., models for composite materials. Moreover, this method is not adapted for applications to $\Gamma$-convergence where in general  sequences are supposed to converge on the \emph{whole} domain to a limiting function.

The main goal of the present paper is to derive a compactness result in the space $GSBV^p$, $p \in (1,\infty)$, \UUU without any a priori bounds or fidelity terms, \EEE  see Theorem \ref{th: comp}. We show that for a sequence of energies $(E_k)_k$  \UUU of the form \eqref{mainfunctional}, \EEE and for functions  $(u_k)_k  \subset GSBV^p(\Omega;\R^m)$ with  $\sup_{k \in \N} E_k(u_k)  <+\infty$ (possibly satisfying boundary conditions), one can find a subsequence, modifications $(y_k)_k \subset GSBV^p(\Omega;\R^m)$ (with the same boundary data \BBB as $(u_k)_k$) \EEE satisfying
$$
 E_k(y_k) \le E_k(u_k) + \tfrac{1}{k},  \ \ \ \ \ \  \mathcal{L}^d \big(\lbrace \nabla y_k \neq \nabla u_k \rbrace \big) \le \tfrac{1}{k},$$
and a limiting function  $u \in GSBV^p(\Omega;\R^m)$ such that
\begin{align*}
(i) & \ \  y_k \to u \text{ in measure on $\Omega$}, \\ 
(ii) & \ \ \nabla y_k  \rightharpoonup \nabla u \text{ weakly in } L^p,\\
(iii) & \ \ \mathcal{H}^{d-1}(J_u) \le \liminf_{k \to \infty} \mathcal{H}^{d-1}(J_{y_k}).
\end{align*}
Properties (ii) and (iii) also hold for the original sequence $(u_k)_k$. As explained above, it is in general indispensable to pass to modifications $(y_k)_k$  to ensure property (i). \UUU The class of admissible energies is very general: \EEE we only require standard growth conditions in $GSBV^p$ together with a mild monotonicity condition on $g$ used in \cite{Caterina}. (For details, see assumptions ($f1$)-($f2$) and ($g1$)-($g4$) in Section \ref{sec: compactness}.)

As applications, we prove existence of minimizers for energies of the form \eqref{mainfunctional} under Dirichlet boundary data.  Moreover, we revisit the $\Gamma$-convergence result for free discontinuity problems established recently in \cite{Caterina}. We show convergence of minimum values and minimizers for a sequence of boundary value problems without any  \UUU fidelity term. \EEE

To prove the main compactness result, we follow the strategy devised in \cite{Friedrich:15-2, Solombrino}: \UUU given a sequence of functions, \EEE we pass to suitable modifications whose \UUU energies coincide with the original ones \EEE up to an error   of order $\theta$. Subsequently, we let $\theta \to 0$ and apply  carefully a diagonal sequence argument (see Section \ref{sec: main proof}).  In contrast to the $GSBD$ setting where piecewise rigid motions have to be subtracted, in the present context of $GSBV^p$ functions we  can work with \emph{piecewise translated configurations}. Accordingly, the piecewise Korn inequality \cite{Friedrich:15-4} is replaced by a suitable \emph{piecewise Poincar\'e inequality} (see Section \ref{sec: piecewise Poin}), which is based on a careful use of the coarea formula in $BV$ (see \cite[Theorem 3.40]{Ambrosio-Fusco-Pallara:2000}). Let us note that the coarea formula has been largely employed to approximate $BV$ functions by piecewise constant functions, particularly to prove lower  semicontinuity \cite{Ambrosio:90}  and $\Gamma$-convergence results \cite{BDV, Caterina} in $SBV$, as well as the existence of quasistatic evolutions \cite{DalMaso-Francfort-Toader:2005, Francfort-Larsen:2003, Giacomini-Ponsiglione:2006}. Compared to \cite{Solombrino},  the passage to modifications is more delicate  due to the more general energies which may  depend explicitly on the crack opening. \BBB At this point, \EEE we draw some ideas from truncation methods in \cite{Caterina} and  use a mild monotonicity assumption on $g$ (see Section \ref{sec: piecewise}).

One of the main motivations for the compactness result is an application to $\Gamma$-convergence for free discontinuity problems. We extend the analysis in \cite{Caterina} by deriving a version  of the $\Gamma$-convergence result including Dirichlet boundary data. To this end, we follow the strategy in \cite[Lemma 7.1]{Giacomini-Ponsiglione:2006}. This eventually allows us to prove the convergence of minima and minimizers along a sequence of boundary value problems. 

The paper is organized as follows. In Section \ref{sec: preli} we first fix the notation and recall some basic properties. Section \ref{sec: compactness} contains the formulation of the main compactness result and its proof. In Section \ref{sec: appli} we finally provide two applications: an existence result for functionals of the form \eqref{mainfunctional} under Dirichlet boundary data and a convergence result  \BBB for \EEE a  sequence of functionals by means of $\Gamma$-convergence.


\section{Notation and preliminaries}\label{sec: preli}

In this section  we fix the notation and recall some basic tools.  

\textbf{Basic notation:} We use the  notations $\R^m_0 = \R^m \setminus \lbrace 0 \rbrace$,  $\mathbb{S}^{d-1} = \lbrace v \in \R^d: \ |v|=1\rbrace$, and $\R_+ = [0,+\infty)$.  For $\Omega \subset \R^d$ open and bounded, we denote by $\mathcal{A}(\Omega)$ the open subsets of $\Omega$. We use the symbol $\triangle$ for the symmetric difference of two sets in $\R^d$.  $\mathcal{L}^d$ denotes the Lebesgue measure on $\R^d$ and $\mathcal{H}^{d-1}$ the $(d-1)$-dimensional Hausdorff measure.   \BBB By \EEE  $L^0(\Omega;\R^m)$ we indicate the space of $\mathcal{L}^d$-measurable functions $u: \Omega \to \R^m$, endowed with the topology of convergence in measure. We observe that this convergence is metrizable. For $x \in \R^d$ and $\rho >0$ we denote by $B_\rho(x)$ the open ball with center $x$ and radius $\rho$. \UUU We denote the indicator function of $E \subset \Omega$ by $\chi_E$. \EEE

We will   use   the following measure-theoretical result. (See \cite[Lemma 4.1, 4.2]{Friedrich:15-2} and note that the \RRR statement in fact holds in arbitrary space dimensions.)  \EEE
\begin{lemma}\label{rig-lemma: concave function2}
Let $\Omega\subset \R^d$ with $\mathcal L^d(\Omega)<\infty$. Then for every sequence $(u_n)_n \subset L^1(\Omega;\R^m)$ with
$$\mathcal L^d\left(\bigcap\nolimits_{n \in \N} \bigcup\nolimits_{m \ge n} \lbrace |u_m - u_n| > 1 \rbrace\right)=0$$  
there exist a  subsequence (not relabeled) and an increasing concave function \BBB $\psi: \R_+ \to \R_+$ \EEE with 
$
\lim_{t\to \infty}\psi(t)=+\infty
$
such that   $$\sup_{n \ge 1} \int_{\Omega}\psi(|u_n|)\, dx < + \infty.$$ 
\end{lemma}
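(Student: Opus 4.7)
The plan is to reduce the measure-theoretic hypothesis to pointwise a.e.\ finiteness of the pointwise supremum $F(x) := \sup_n |u_n(x)|$, and then to construct $\psi$ directly from the distribution function of $F$ via a de la Vall\'ee--Poussin type argument. In fact the whole sequence already works, so the subsequence in the statement can be taken to be trivial.

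First, I would rewrite the hypothesis in terms of its complement: the set
\[
\bigcup_{n \in \mathbb{N}} \bigcap_{m \ge n}\big\{x \in \Omega:\ |u_m(x)-u_n(x)|\le 1\big\}
\]
has full measure in $\Omega$. Hence for a.e.\ $x$ there exists $N(x) \in \mathbb{N}$ such that $|u_m(x)|\le|u_{N(x)}(x)|+1$ for every $m \ge N(x)$. Since each $u_n$ is finite a.e.\ (being in $L^1$) and the index set is countable, a standard null-set argument already forces $F(x)<\infty$ for a.e.\ $x\in\Omega$.

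I would then set $\mu(t):=\mathcal{L}^d(\{F>t\})$, which is non-increasing with $\mu(t)\to 0$ by continuity of measure. Inductively choose $0=t_0<t_1<t_2<\dots$ with $t_k\to\infty$, $\mu(t_k)\le 2^{-k}$, and non-decreasing increments $t_{k+1}-t_k\ge t_k-t_{k-1}$, and define $\psi:\mathbb{R}_+\to\mathbb{R}_+$ as the piecewise linear interpolant with $\psi(0)=0$ and $\psi(t_k)=k$. The monotonicity of the increments makes the slopes $1/(t_{k+1}-t_k)$ non-increasing, so $\psi$ is concave, strictly increasing, and $\psi(t)\to\infty$. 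Using $|u_n|\le F$ pointwise, Fubini applied to $\psi(F(x))=\int_0^{F(x)}\psi'(t)\,dt$, and the monotonicity of $\mu$,
\[
\int_\Omega \psi(|u_n|)\,dx \le \int_\Omega \psi(F)\,dx = \int_0^\infty \psi'(t)\,\mu(t)\,dt \le \sum_{k\ge 0}\mu(t_k) \le \mu(0)+\sum_{k\ge 1}2^{-k} < \infty,
\]
uniformly in $n$, which yields the claim.

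The only subtle point is the first step: the hypothesis supplies only a highly non-uniform pointwise control, with the reference index $N(x)$ depending on $x$, and no uniform $L^p$-type bound on any individual $u_n$. The key observation is that once $N(x)$ is fixed, the entire tail $(u_m(x))_{m\ge N(x)}$ is controlled by $|u_{N(x)}(x)|+1<\infty$, while the finitely many earlier terms are automatically a.e.\ finite; a countable union of null sets then gives $F<\infty$ a.e. Everything that follows---the construction of $\psi$ and the layer-cake estimate---is routine.
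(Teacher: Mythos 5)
Your proof is correct, and in fact slightly stronger than the statement: the hypothesis already forces $F(x)=\sup_n|u_n(x)|<\infty$ a.e.\ (the complement of the exceptional set is $\bigcup_n\bigcap_{m\ge n}\{|u_m-u_n|\le 1\}$, so the tail beyond $N(x)$ is controlled by $|u_{N(x)}(x)|+1$ and the finitely many earlier terms are a.e.\ finite), after which the piecewise-linear de la Vall\'ee--Poussin construction from the distribution function $\mu(t)=\mathcal L^d(\{F>t\})$ gives $\int_\Omega\psi(F)\le \mathcal L^d(\Omega)+1$ with no need to extract a subsequence. All the individual steps check out: $F$ is measurable as a countable supremum, $\mu(t)\to 0$ by continuity from above (here $\mathcal L^d(\Omega)<\infty$ is used), non-decreasing increments $t_{k+1}-t_k$ make the slopes non-increasing and hence $\psi$ concave, and the Tonelli/layer-cake computation is legitimate for the nonnegative integrand. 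The paper itself does not prove the lemma but cites \cite[Lemmas 4.1, 4.2]{Friedrich:15-2}; the argument there (as indicated also in Remark \ref{rem: leichter}) first passes to a subsequence with quantitatively summable exceptional sets and then builds the concave function from an increasing sequence of levels $(b_i)_i$. Your observation that the pointwise supremum of the \emph{whole} sequence is already a.e.\ finite lets you skip the subsequence extraction entirely, at no cost elsewhere; otherwise the two routes are the same in spirit (control the superlevel sets, then choose $\psi$ growing slowly enough that the layer-cake integral converges).
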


\textbf{$BV$ functions:}  For the general notions on $SBV$ and $GSBV$ functions and their  properties we refer to \cite{Ambrosio-Fusco-Pallara:2000}. For $u \in GSBV^p(\Omega;\R^m)$, $\Omega \subset \R^d$ open, we denote by $\nabla u$ the density of the absolutely
continuous part of $Du$ with respect to the Lebesgue measure $\mathcal{L}^d$. $J_u$ stands for the set of approximate jump points of $u$ and $\nu_u$ denotes the measure-theoretic normal to $J_u$. The symbols $u^\pm$ denote the one-sided approximate
limits of $u$ at a point of $J_u$ and we write $[u] = u^+ - u^-$.  We will also use the notation 
\begin{align}\label{eq: GSBVM}
GSBV_M^p(\Omega;\R^m) = \lbrace u \in GSBV^p(\Omega;\R^m): \ \Vert \nabla u \Vert_{L^p(\Omega)}^p + \mathcal{H}^{d-1}(J_u) \le M \rbrace.
\end{align} 
The following compactness result in $GSBV^p$ due to {\sc Ambrosio} \cite{Ambrosio:90} will be a key ingredient for our result.

\begin{theorem}\label{th: GSBD comp}
Let $\Omega \subset \R^d$ be open, bounded. Let $(u_k)_k$ be a sequence in $GSBV^p(\Omega;\R^m)$. Suppose that there exists a   continuous function $\psi: \R_+ \to \R_+$ with $\lim_{t \to \infty} \psi(t) = + \infty$ such that 
$$\sup_{k \in \N}  \Big(\int_{\Omega} \psi(|u_k|)\, dx + \int_{\Omega} |\nabla u_k|^p \, dx  + \mathcal{H}^{d-1}(J_{u_k}) \Big) < + \infty.$$
Then there exists a subsequence, still denoted by $(u_k)_k$, and a function $u \in GSBV^p(\Omega;\R^m)$ such that
$u_k \to u$ in measure on $\Omega$, $\nabla u_k \rightharpoonup \nabla u $ weakly in $L^p(\Omega;\R^{m \times d})$, and $\mathcal{H}^{d-1}(J_u) \le \liminf_{k \to \infty} \mathcal{H}^{d-1}(J_{u_k})$.
\end{theorem}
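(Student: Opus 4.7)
The plan is to reduce the problem to the classical $SBV$ compactness theorem under an $L^\infty$ bound (see \cite{Ambrosio-Fusco-Pallara:2000}) by means of a truncation argument, and then let the truncation parameter $N\to\infty$. The role of the coercive function $\psi$ is purely tightness: since $\psi(t)\to\infty$, a Chebyshev-type inequality gives
\[
\sup_{k}\mathcal{L}^d(\{|u_k|>N\})\le \frac{1}{\inf_{t>N}\psi(t)}\sup_k\int_\Omega \psi(|u_k|)\,dx \longrightarrow 0 \quad \text{as } N\to\infty,
\]
so the sequence does not escape to infinity in measure.

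For each $N\in\N$ I would fix a smooth $1$-Lipschitz map $\phi_N:\R^m\to\R^m$ which is the identity on $\{|\xi|\le N\}$ and bounded by $N+1$. Setting $u_k^N:=\phi_N\circ u_k$, the $GSBV^p$ chain rule yields $u_k^N\in SBV(\Omega;\R^m)\cap L^\infty(\Omega;\R^m)$ with $|\nabla u_k^N|\le |\nabla u_k|$ almost everywhere and $J_{u_k^N}\subset J_{u_k}$ up to $\mathcal{H}^{d-1}$-null sets. Hence $(u_k^N)_k$ is uniformly bounded in $SBV\cap L^\infty$ with uniform control on the $L^p$-norm of the gradient and on $\mathcal{H}^{d-1}(J_{u_k^N})$. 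The classical Ambrosio compactness theorem in the $L^\infty$-bounded regime then produces, for each $N$, a subsequence and a limit $u^N\in SBV(\Omega;\R^m)$ with $u_k^N\to u^N$ in $L^1$, $\nabla u_k^N\rightharpoonup \nabla u^N$ weakly in $L^p$, and $\mathcal{H}^{d-1}(J_{u^N})\le\liminf_k\mathcal{H}^{d-1}(J_{u_k^N})\le\liminf_k\mathcal{H}^{d-1}(J_{u_k})$. A standard diagonal extraction delivers a single subsequence for which all of these assertions hold simultaneously for every $N$.

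Next I would show that $(u_k)_k$ is Cauchy in measure. Given $\varepsilon>0$, pick $N$ so large that $\sup_k\mathcal{L}^d(\{|u_k|>N\})<\varepsilon$; since $u_k=u_k^N$ outside this set,
\[
\mathcal{L}^d(\{|u_j-u_k|>\varepsilon\})\le \mathcal{L}^d(\{|u_j^N-u_k^N|>\varepsilon\})+2\varepsilon,
\]
and the first term tends to zero as $j,k\to\infty$ by the $L^1$ convergence $u_k^N\to u^N$. Let $u\in L^0(\Omega;\R^m)$ denote the resulting limit in measure. The continuous mapping theorem forces $u^N=\phi_N\circ u$ almost everywhere, and since each $\phi_N\circ u=u^N\in SBV(\Omega;\R^m)$ with uniformly bounded gradient in $L^p$ and uniformly bounded jump set, the definition of $GSBV^p$ yields $u\in GSBV^p(\Omega;\R^m)$.

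Finally, the remaining two conclusions follow by sending $N\to\infty$. For the weak convergence, reflexivity and the uniform $L^p$-bound on $\nabla u_k$ allow extraction of a further subsequence with $\nabla u_k\rightharpoonup v$ in $L^p$, and the identification $v=\nabla u$ is obtained by testing against smooth compactly supported tensor fields, using $\nabla u_k^N=D\phi_N(u_k)\nabla u_k$ and the fact that $D\phi_N=\mathrm{Id}$ on $\{|u_k|\le N\}$, whose complement has measure $o_N(1)$ uniformly in $k$. For the lower semicontinuity of the jump set, the identity $J_u=\bigcup_N J_{\phi_N\circ u}=\bigcup_N J_{u^N}$ up to $\mathcal{H}^{d-1}$-null sets, combined with the bound $\mathcal{H}^{d-1}(J_{u^N})\le\liminf_k\mathcal{H}^{d-1}(J_{u_k})$ already obtained above, yields the desired inequality once $N\to\infty$. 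The main technical delicacy is precisely this passage $N\to\infty$: one must ensure that the tightness provided by $\psi$ propagates through all the truncations consistently, so that the nested pieces $u^N$ glue into a single $GSBV^p$ function that simultaneously realizes the weak gradient limit and the jump lower semicontinuity of the original sequence.
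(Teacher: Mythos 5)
This statement is Ambrosio's classical compactness theorem in $GSBV^p$; the paper does not prove it but quotes it from \cite{Ambrosio:90} (see also \cite{Ambrosio89} and \cite{Ambrosio-Fusco-Pallara:2000}), so there is no internal proof to compare against. Your truncation argument is, in substance, the standard proof of this result: reduce to the $L^\infty$-bounded $SBV$ compactness theorem via the maps $\phi_N$, use the Chebyshev/tightness estimate supplied by $\psi$ to show the original sequence is Cauchy in measure, and recover the gradient and jump assertions by letting $N\to\infty$. I consider the argument correct, with two points that each deserve one more line. First, in the identification of the weak limit of $\nabla u_k$ you need $\sup_k\int_{\{|u_k|>N\}}|\nabla u_k|\,dx\to 0$ as $N\to\infty$, not merely that $\mathcal{L}^d(\{|u_k|>N\})$ is small uniformly in $k$; this follows from H\"older's inequality together with the uniform $L^p$ bound and the tightness estimate, so the hypothesis $p>1$ is genuinely used at this step. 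Second, in the last step you pass from $\mathcal{H}^{d-1}(J_{u^N})\le\liminf_k\mathcal{H}^{d-1}(J_{u_k})$ for every $N$ to the same bound for $\mathcal{H}^{d-1}(J_u)$ with $J_u=\bigcup_N J_{u^N}$ up to null sets; a countable union of sets each of measure at most $L$ need not have measure at most $L$, so you should take the radial profile of $\phi_N$ strictly increasing, hence $\phi_N$ injective, which makes $N\mapsto J_{u^N}$ nondecreasing up to $\mathcal{H}^{d-1}$-null sets and lets you conclude by continuity of the measure along increasing unions. With these two clarifications the proof is complete and matches the argument in the cited literature.
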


\textbf{Caccioppoli partitions:} We say that a partition $\mathcal{P} = (P_j)_j$ of an open set $\Omega\subset \R^d$ is a \textit{Caccioppoli partition} of $\Omega$ if $\sum\nolimits_j \mathcal{H}^{d-1}(\partial^* P_j) < + \infty$, where $\partial^* P_j$ denotes  the \emph{essential boundary} of $P_j$ (see \cite[Definition 3.60]{Ambrosio-Fusco-Pallara:2000}). We say a  partition is \textit{ordered} if $\mathcal L^d(P_i) \ge \mathcal L^d(P_j)$ for $i \le j$. The  local structure of Caccioppoli partitions can be characterized as follows (see \cite[Theorem 4.17]{Ambrosio-Fusco-Pallara:2000}).
 
\begin{theorem}\label{th: local structure}
Let $(P_j)_j$ be a Caccioppoli partition of $\Omega$. Then 
$$\bigcup\nolimits_j (P_j)^1 \cup \bigcup\nolimits_{i \neq j} (\partial^* P_i \cap \partial^* P_j)$$
contains $\mathcal{H}^{d-1}$-almost all of $\Omega$.
\end{theorem}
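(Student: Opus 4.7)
The plan is to reduce the claim to the classical structure theorem for $BV$ functions applied to a piecewise constant encoding of the partition, followed by an exhaustion argument in $N$. For each $N\in\N$, truncate the partition by setting $Q_N := \bigcup_{j>N} P_j$, so that $(P_1,\dots,P_N,Q_N)$ is a \emph{finite} Caccioppoli partition of $\Omega$ with $\mathcal{H}^{d-1}(\partial^* Q_N) \le \sum_{j>N}\mathcal{H}^{d-1}(\partial^* P_j) < \infty$. Choosing pairwise distinct reals $a_1,\dots,a_{N+1}$, the piecewise constant function
\[
u_N := \sum_{k=1}^{N} a_k\,\chi_{P_k} + a_{N+1}\,\chi_{Q_N}
\]
lies in $SBV(\Omega)$ because $|Du_N|(\Omega)$ is controlled by $\sum_{k\le N}|a_k|\mathcal{H}^{d-1}(\partial^* P_k) + |a_{N+1}|\mathcal{H}^{d-1}(\partial^* Q_N)<\infty$.

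Next I would apply the standard $BV$ structure theorem: outside an $\mathcal{H}^{d-1}$-null set, every point of $\Omega$ is either an approximate continuity point of $u_N$ or an approximate jump point. Since $u_N$ attains only the finitely many distinct values $a_1,\dots,a_{N+1}$, an approximate continuity point has approximate limit equal to some $a_{k_0}$, forcing $x\in(P_{k_0})^1$ or $x\in(Q_N)^1$; at a jump point both one-sided traces are distinct values in $\{a_k\}$, so $x\in\partial^* A_i\cap\partial^* A_j$ for two distinct members $A_i, A_j$ of the finite partition. Using $\partial^* Q_N\subset\bigcup_{j>N}\partial^* P_j$, this yields, up to $\mathcal{H}^{d-1}$-null sets,
\[
\Omega\subset \bigcup_{k\le N}(P_k)^1\;\cup\;(Q_N)^1\;\cup\;\bigcup_{i\ne j}\bigl(\partial^* P_i\cap\partial^* P_j\bigr).
\]

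Intersecting over $N$, a point of $\Omega$ either lies in $\bigcup_j(P_j)^1 \cup \bigcup_{i\ne j}(\partial^* P_i\cap\partial^* P_j)$ for some $N$, or else belongs to the residual $R := \bigcap_N (Q_N)^1$. The main obstacle is to show $\mathcal{H}^{d-1}(R) = 0$: points of $R$ have Lebesgue density $1$ in every $Q_N$, despite $\mathcal{L}^d(Q_N)\to 0$ and $\mathcal{H}^{d-1}(\partial^* Q_N)\to 0$. I would handle this via a Vitali-type covering argument: the density-one condition provides, for each $x\in R$ and $\varepsilon>0$, balls $B_r(x)$ with $\mathcal{L}^d(Q_N\cap B_r(x))\ge(1-\varepsilon)\omega_d r^d$ for arbitrarily small $r$; extracting a disjoint subfamily covering a hypothetical positive-$\mathcal{H}^{d-1}$-measure portion of $R$ and combining with $\mathcal{L}^d(Q_N)\to 0$---or, alternatively, invoking the relative isoperimetric inequality in small balls together with $\mathcal{H}^{d-1}(\partial^* Q_N)\to 0$---yields a contradiction, whence $\mathcal{H}^{d-1}(R) = 0$ and the proof is complete.
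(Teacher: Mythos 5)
The paper does not actually prove this statement; it is quoted verbatim from \cite[Theorem 4.17]{Ambrosio-Fusco-Pallara:2000}, so you are supplying a proof where the paper supplies a citation. Your reduction is sound: encoding the finite truncated partition $(P_1,\dots,P_N,Q_N)$ by a piecewise constant $SBV$ function and invoking the Federer--Vol'pert theorem ($\mathcal{H}^{d-1}(S_{u_N}\setminus J_{u_N})=0$) correctly places $\mathcal{H}^{d-1}$-a.e.\ point in $\bigcup_{k\le N}(P_k)^1\cup(Q_N)^1\cup\bigcup_{i\ne j}(\partial^*A_i\cap\partial^*A_j)$, since the finitely many values $a_k$ are separated. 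One small point you should make explicit: the inclusion $\partial^*Q_N\subset\bigcup_{j>N}\partial^*P_j$ holds only up to an $\mathcal{H}^{d-1}$-null set, and follows from $D\chi_{Q_N}=\sum_{j>N}D\chi_{P_j}$ (the partial sums converge in $L^1$ with uniformly bounded variation), whence $\mathcal{H}^{d-1}\,\raisebox{-1pt}{$\llcorner$}\,\partial^*Q_N\le\sum_{j>N}\mathcal{H}^{d-1}\,\raisebox{-1pt}{$\llcorner$}\,\partial^*P_j$.

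The only genuinely delicate step is $\mathcal{H}^{d-1}(R)=0$ for $R=\bigcap_N(Q_N)^1$, and here your first proposed variant does not work: a disjoint family of balls $B_{r_i}(x_i)$ with $\mathcal{L}^d(Q_N\cap B_{r_i})\ge(1-\varepsilon)\mathcal{L}^d(B_{r_i})$ only yields $\sum_i r_i^d\le C\,\mathcal{L}^d(Q_N)$, which controls $\mathcal{H}^d(R)$ but gives no bound on $\sum_i r_i^{d-1}$, hence none on $\mathcal{H}^{d-1}(R)$. Your alternative via the relative isoperimetric inequality does close the gap and should be the argument of record: for $x\in R$ the ratio $\mathcal{L}^d(Q_N\cap B_r(x))/\mathcal{L}^d(B_r(x))$ tends to $1$ as $r\to0$ and drops below $\tfrac12$ once $\mathcal{L}^d(B_r(x))>2\mathcal{L}^d(Q_N)$, so there is a radius $r_N(x)\le C(\mathcal{L}^d(Q_N))^{1/d}$ at which the ratio equals $\tfrac12$; the relative isoperimetric inequality then gives $\mathcal{H}^{d-1}(\partial^*Q_N\cap B_{r_N(x)}(x))\ge c(d)\,r_N(x)^{d-1}$, and a Vitali $5r$-covering of $R$ by such balls yields $\mathcal{H}^{d-1}_{\delta_N}(R)\le C\,\mathcal{H}^{d-1}(\partial^*Q_N)\le C\sum_{j>N}\mathcal{H}^{d-1}(\partial^*P_j)\to0$ with $\delta_N\to0$ (working on compact subsets of $\Omega$ so the balls stay inside $\Omega$). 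With that variant selected and written out, the proof is complete.
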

Here $(P)^1$ denote the points where $P$ has density one (see again \cite[Definition 3.60]{Ambrosio-Fusco-Pallara:2000}). 
 Essentially, the  theorem states that $\mathcal{H}^{d-1}$-a.e.\ point of $\Omega$ either belongs to exactly one element of the partition or to the intersection of exactly two sets $\partial^* P_i$, $\partial^* P_j$. We now state a compactness result for ordered Caccioppoli partitions. (See \cite[Theorem 4.19, Remark 4.20]{Ambrosio-Fusco-Pallara:2000} or \cite[Theorem 2.8]{Solombrino} for the slightly adapted version presented here.)  

\begin{theorem}\label{th: comp cacciop}
Let $\Omega \subset \R^d$ be a bounded Lipschitz domain.  Let $\mathcal{P}_i = (P_{j,i})_j$, $i \in \N$, be a sequence of ordered Caccioppoli partitions of $\Omega$ with $$\sup\nolimits_{i \ge 1} \sum\nolimits_{j}\mathcal{H}^{d-1}(\partial^* P_{j,i}) < + \infty.$$
Then there exists a Caccioppoli partition $\mathcal{P} = (P_j)_j$ \UUU of $\Omega$ \EEE and a  subsequence (not relabeled) such that  
$\sum_{j} \mathcal L^d\left(P_{j,i} \triangle  P_j\right) \to 0$  as $i \to \infty$.
\end{theorem}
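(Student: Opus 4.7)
The plan is to reduce the partition compactness to BV compactness applied componentwise, then upgrade the resulting pointwise-in-$j$ convergence to summable convergence by exploiting the ordering together with the relative isoperimetric inequality in the Lipschitz domain~$\Omega$.

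First, fix $j \in \N$ and observe that $\chi_{P_{j,i}} \in BV(\Omega;\{0,1\})$ with $|D\chi_{P_{j,i}}|(\Omega) \le \mathcal{H}^{d-1}(\partial^* P_{j,i}) \le C$ uniformly in $i$ by hypothesis. The classical BV compactness theorem then yields, along a subsequence, convergence $\chi_{P_{j,i}} \to \chi_{P_j}$ in $L^1(\Omega)$ for some set of finite perimeter $P_j \subset \Omega$. A diagonal extraction produces one subsequence (not relabeled) along which this convergence holds simultaneously for every $j \in \N$. Passing to the $L^1$-limit in $\chi_{P_{j,i}} \chi_{k,i} = 0$ for $j \ne k$ gives $\mathcal{L}^d(P_j \cap P_k) = 0$, and the lower semicontinuity of the perimeter together with Fatou yields $\sum_j \mathcal{H}^{d-1}(\partial^* P_j) \le \liminf_{i} \sum_j \mathcal{H}^{d-1}(\partial^* P_{j,i}) < +\infty$.

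The crucial input is a tail estimate that is uniform in $i$. Since $\mathcal{P}_i$ is ordered and $\sum_j \mathcal{L}^d(P_{j,i}) = \mathcal{L}^d(\Omega)$, one has $\mathcal{L}^d(P_{j,i}) \le \mathcal{L}^d(\Omega)/j$, so for $j \ge 2$ the set $P_{j,i}$ satisfies $\mathcal{L}^d(P_{j,i}) \le \mathcal{L}^d(\Omega)/2$ and the relative isoperimetric inequality on the Lipschitz domain $\Omega$ applies:
\[
\mathcal{L}^d(P_{j,i})^{(d-1)/d} \le c_\Omega \, \mathcal{H}^{d-1}(\partial^* P_{j,i} \cap \Omega).
\]
Writing $\mathcal{L}^d(P_{j,i}) = \mathcal{L}^d(P_{j,i})^{1/d}\,\mathcal{L}^d(P_{j,i})^{(d-1)/d}$ and using the ordering, one obtains for every $N \ge 2$ the bound
\[
\sum_{j > N} \mathcal{L}^d(P_{j,i}) \le \left(\frac{\mathcal{L}^d(\Omega)}{N}\right)^{1/d} c_\Omega \sum_j \mathcal{H}^{d-1}(\partial^* P_{j,i}) \le \frac{C}{N^{1/d}},
\]
uniformly in $i$. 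An identical argument, applied to $(P_j)_j$ after reindexing in decreasing order (which one may always do), yields $\sum_{j > N} \mathcal{L}^d(P_j) \le C/N^{1/d}$. In particular $\sum_j \mathcal{L}^d(P_j) = \mathcal{L}^d(\Omega)$, so $(P_j)_j$ is indeed a Caccioppoli partition of $\Omega$.

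To conclude the summable convergence, fix $\eta > 0$, pick $N$ so large that $C/N^{1/d} < \eta/3$, and split
\[
\sum_j \mathcal{L}^d(P_{j,i} \triangle P_j) = \sum_{j=1}^N \mathcal{L}^d(P_{j,i} \triangle P_j) + \sum_{j > N} \mathcal{L}^d(P_{j,i} \triangle P_j).
\]
The first sum is finite and vanishes as $i \to \infty$ by the componentwise $L^1$ convergence, while the second is bounded by $\sum_{j > N}(\mathcal{L}^d(P_{j,i}) + \mathcal{L}^d(P_j)) \le 2\eta/3$ uniformly in $i$. The hard point is precisely this tail control: the ordering by itself only gives $\mathcal{L}^d(P_{j,i}) \le \mathcal{L}^d(\Omega)/j$, which is not summable, so the perimeter bound must be converted into a volume estimate via the relative isoperimetric inequality, and this is where the Lipschitz regularity of~$\Omega$ plays an essential role.
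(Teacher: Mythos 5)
Your proof is correct in substance. The paper does not prove this theorem itself --- it cites \cite[Theorem 4.19, Remark 4.20]{Ambrosio-Fusco-Pallara:2000} and \cite[Theorem 2.8]{Solombrino} --- but your argument is essentially the standard one: the reference proof encodes the whole partition into a single $BV$ function $\sum_j t_j\chi_{P_{j,i}}$ for a suitable decreasing sequence $(t_j)_j$ and recovers the limit partition from level sets, whereas you apply $BV$ compactness componentwise with a diagonal extraction; both hinge on exactly the same isoperimetric tail estimate (which this paper uses independently in \eqref{eq: volume}, there with the global isoperimetric inequality $\mathcal{L}^d(P)^{1-1/d}\le c_d\,\mathcal{H}^{d-1}(\partial^* P)$ rather than the relative one, which also spares you the case distinction $j\ge 2$ and any connectedness issue). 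Two small points to tidy up. First, the sentence ``In particular $\sum_j\mathcal{L}^d(P_j)=\mathcal{L}^d(\Omega)$'' does not follow from the tail bound on $(P_j)_j$ alone (all $P_j$ empty would satisfy it); the correct deduction is that for each fixed $N$ one has $\sum_{j\le N}\mathcal{L}^d(P_j)=\lim_i\sum_{j\le N}\mathcal{L}^d(P_{j,i})\ge\mathcal{L}^d(\Omega)-CN^{-1/d}$ by the \emph{uniform} tail bound for the $P_{j,i}$, and then $N\to\infty$. Second, no reindexing of the limit family is needed (and reindexing would break the correspondence with $P_{j,i}$ you need at the end): the ordering $\mathcal{L}^d(P_{j,i})\ge\mathcal{L}^d(P_{k,i})$ for $j\le k$ passes to the limit, so $(P_j)_j$ is automatically ordered. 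With these one-line repairs the argument is complete.
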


The proof in  \cite{Ambrosio-Fusco-Pallara:2000} shows that the result still holds if the assumption of ordered partitions is replaced by the weaker assumption that for fixed $j_0 \in \N$ only $(P_{j,i})_{j\ge j_0}$ are ordered, i.e., $\mathcal L^d(P_{j,i}) \ge \mathcal L^d(P_{k,i})$ for all $j_0 \le j \le k$ and $i \in \N$.

The starting point for the construction of piecewise translated configurations will be the following \MMM approximation of $GSBV$ functions by piecewise constant functions, which can be seen as a \EEE piecewise Poincar\'e inequality.

 \begin{theorem}\label{th: main poinc}
Let $\Omega \subset \R^d$ be a bounded Lipschitz domain. Let $m \in \N$. Then there exists  a constant $C_0=C_0(m) \ge 1$   such that for  each $u \in (GSBV(\Omega; \R))^m$ with $\Vert  \nabla u\Vert_{L^1(\Omega)} + \mathcal{H}^{d-1}(J_u) < + \infty$  there exists a Caccioppoli partition $(P_j)_{j=1}^\infty$ of $\Omega$ and corresponding translations $(b_j)_{j=1}^\infty \subset \R^m$ such that  $v:= u - \sum_{j=1}^\infty b_j \chi_{P_j} \in SBV(\Omega;\R^m) \cap L^\infty(\Omega;\R^m)$ and 
\begin{align}\label{eq: kornpoinsharp0}
\begin{split}
(i) & \ \   \sum\nolimits_{j=1}^\infty \mathcal{H}^{d-1}( \partial^* P_j ) \le   2 \mathcal{H}^{d-1}(J_u\cup \partial \Omega) +1,\\
(ii) &\ \ \Vert v \Vert_{L^{\infty}(\Omega)} \le C_0  \Vert  \nabla u\Vert_{L^1(\Omega)}.
\end{split}
\end{align}
\end{theorem}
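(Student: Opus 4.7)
The plan is to build the partition by slicing the codomain $\R^m$ into a grid of cubes of side length $L>0$ (to be chosen), taking their preimages under $u$, and exploiting the coarea formula in $GSBV$ to pick a translate of the grid for which the level-set contribution to the partition boundary is small. The translations $b_j$ will then be the centres of the chosen cubes.

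Treating first the scalar case $m=1$: fix $L>0$ and for $s\in[0,L)$ form the preimage partition $P_k^s := u^{-1}([kL+s,(k+1)L+s))$, $k\in\Zz$, whose interior boundary in $\Omega$ lies in $J_u\cup\bigcup_k(\{u=kL+s\}\setminus J_u)$. Applying the coarea formula to the approximately differentiable part of $u\in GSBV(\Omega;\R)$ (justified by truncation $u^{(T)} := (u\wedge T)\vee(-T)\in SBV$ and monotone convergence as $T\to\infty$) gives
\[
\int_0^L \sum_{k\in\Zz}\mathcal{H}^{d-1}\bigl(\{u=kL+s\}\setminus J_u\bigr)\,ds = \int_\R \mathcal{H}^{d-1}\bigl(\partial^*\{u>t\}\cap\Omega\setminus J_u\bigr)\,dt = \Vert\nabla u\Vert_{L^1(\Omega)}.
\]
Choosing $L := 2\Vert\nabla u\Vert_{L^1(\Omega)}$ thus furnishes $s^*\in[0,L)$ with $\sum_k\mathcal{H}^{d-1}(\{u=kL+s^*\}\setminus J_u)\le \tfrac{1}{2}$. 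Setting $b_k := kL+s^*+L/2$, the estimate $|u-b_k|\le L/2 = \Vert\nabla u\Vert_{L^1}$ on $P_k^{s^*}$ establishes (ii) with $C_0(1)=1$, while counting interior partition boundaries with multiplicity $2$ and the $\partial\Omega$-part once yields
\[
\sum_k \mathcal{H}^{d-1}(\partial^*P_k^{s^*}) \le 2\mathcal{H}^{d-1}(J_u) + 1 + \mathcal{H}^{d-1}(\partial\Omega) \le 2\mathcal{H}^{d-1}(J_u\cup\partial\Omega) + 1,
\]
proving (i) after discarding empty pieces and reindexing by $j\in\N$.

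For $m\ge 2$, I would extend the argument by replacing intervals with cubes in $\R^m$: set $P_k^s := u^{-1}(s+kL+[0,L)^m)$ for $k\in\Zz^m$ and $s\in[0,L)^m$. The interior partition boundary in $\Omega\setminus J_u$ is contained in $\bigcup_{\ell=1}^m\bigcup_n(\{u_\ell=s_\ell+nL\}\setminus J_u)$, and combining Fubini with the scalar coarea formula applied to each component $u_\ell$ (using $J_{u_\ell}\subseteq J_u$ to enlarge the excision) yields
\[
\int_{[0,L)^m}\sum_{\ell,n}\mathcal{H}^{d-1}\bigl(\{u_\ell=s_\ell+nL\}\setminus J_u\bigr)\,ds \le L^{m-1}\sum_\ell\Vert\nabla u_\ell\Vert_{L^1} \le L^{m-1}\sqrt{m}\,\Vert\nabla u\Vert_{L^1}.
\]
Choosing $L := 2\sqrt{m}\,\Vert\nabla u\Vert_{L^1}$ and $s^*\in[0,L)^m$ achieving the average produces a double sum bounded by $\tfrac{1}{2}$. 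With $b_k$ the centre of the $k$-th cube, the componentwise bound $|v_\ell|\le L/2$ yields $|v|\le \sqrt{m}(L/2) = m\Vert\nabla u\Vert_{L^1}$, so (ii) holds with $C_0(m):=m$; bound (i) is verified identically to the scalar case.

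To finish, $v := u - \sum_j b_j\chi_{P_j}$ is bounded by construction, has $\nabla v = \nabla u$, and $J_v \subseteq J_u \cup \bigcup_{i\neq j}(\partial^*P_i\cap\partial^*P_j)$ has finite $\mathcal{H}^{d-1}$-measure by (i), so $v\in GSBV(\Omega;\R^m)\cap L^\infty = SBV(\Omega;\R^m)\cap L^\infty$. The main technical point is justifying the coarea identity in the full $GSBV$ setting (where $u$ itself need not be integrable) via the truncation argument above; the rest is a bookkeeping combination of Fubini and the scalar coarea formula, with the degenerate case $\Vert\nabla u\Vert_{L^1}=0$ handled separately by partitioning into the (essential) level sets of $u$, on each of which $v\equiv 0$.
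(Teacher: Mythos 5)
Your proposal is correct and follows exactly the route the paper takes (via the cited reference \cite[Theorem 2.3]{Friedrich:15-4}, going back to \cite{Ambrosio:90} and \cite{BDV}): the $P_j$ are preimages of a well-chosen translate of a cubic grid in the codomain, i.e.\ intersections of level sets of the components, with the translate selected by averaging the coarea formula. The only points left implicit --- that the offset $s^*$ should also avoid the countably many levels $t$ with $\mathcal{L}^d(\{u_\ell=t\})>0$, that the multiplicity-two count in (i) rests on the structure theorem for Caccioppoli partitions (Theorem \ref{th: local structure}) rather than on a per-cube face count, and that $v\in SBV$ needs a short truncation argument since $GSBV$ is not a vector space --- are standard.
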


\MMM This result  essentially relies on the \emph{coarea formula} in $BV$ (see \cite[Theorem 3.40]{Ambrosio-Fusco-Pallara:2000}), where the sets $P_j$ are chosen as the intersection of suitable level sets of the components $u_i$, $i=1,\ldots,m$.  For the proof we refer to  \cite[Theorem 2.3]{Friedrich:15-4}, but we also mention that the argument can be found in previous literature, e.g., in  \cite[Theorem 3.3]{Ambrosio:90} and \cite[Proposition 6.2]{BDV}. \EEE

\section{Compactness result in $GSBV^p$}\label{sec: compactness}

In this section we formulate and prove the main compactness result. 

\subsection{Formulation of the main compactness result}\label{sec: formulation}

Throughout the paper we fix the constants $p \in (1,\infty)$, $0< c_1 \le c_2 < + \infty$, $1 \le c_3 < + \infty$, and $0< c_4 < c_5 < + \infty$. We will consider integral functionals with bulk densities $f: \Omega \times \R^{m \times d} \to \R_+$ satisfying the conditions
\begin{itemize}
\item[($f1$)]  (measurability) $f$ is Borel measurable on $\Omega \times \R^{m\times d}$,
\item[($f2$)] (lower and upper bound) for every $x \in \Omega$ and every $\xi \in \R^{m \times d}$ 
$$c_1 |\xi|^p \le f(x,\xi) \le c_2 (1+ |\xi|^p)$$
\end{itemize}
and surface densities $g: \Omega \times \R^m_0 \times \mathbb{S}^{d-1} \to \R_+$ satisfying the conditions  
\begin{itemize}
\item[($g1$)] (measurability) $g$ is Borel measurable on $\Omega \times \R^m_0 \times \mathbb{S}^{d-1}$, 
\item[($g2$)] (estimate for $c_3 |\zeta_1| \le |\zeta_2|$) for every $x \in \Omega$ and every $\nu \in \mathbb{S}^{d-1}$ we have
$$g(x, \zeta_1 , \nu)   \le  g(x, \zeta_2, \nu)$$
for every $\zeta_1, \zeta_2 \in \R^m_0$ with $c_3|\zeta_1| \le |\zeta_2|$,
\item[($g3$)] (lower and upper bound) \RRR for every $x \in \Omega$, $\zeta \in \R^m_0$, and  $\nu \in \mathbb{S}^{d-1}$ we have \EEE
$$c_4  \le g(x,\zeta,\nu)  \le  c_5,$$
\item[($g4$)] (symmetry) for every $x \in \Omega$, $\zeta \in \R^m_0$, and  $\nu \in \mathbb{S}^{d-1}$ we have 
$$g(x,\zeta,\nu) = g(x,-\zeta,-\nu).$$
\end{itemize}
We let $\mathcal{E}_\Omega = \mathcal{E}_\Omega(\Omega,c_1,c_2,c_3,c_4,c_5,p)$ be the collection of   all integral functionals  $E: L^0(\Omega; \R^m) \times \mathcal{A}(\Omega)\to [0,+\infty]$ defined by
\begin{align}\label{eq: main energy functional}
E(u,A) = \begin{cases} \int_{A} f(x,\nabla u(x)) \, dx + \int_{J_u \cap A} g(x,[u](x), \nu_u(x)) \, d\mathcal{H}^{d-1}(x) & \text{if } u|_A \in GSBV^p(A;\R^m), \\
+ \infty & \text{else,} \end{cases}
\end{align}
where $f: \Omega \times \R^{m \times d} \to \R_+$ satisfies ($f1$)-($f2$) and $g: \Omega \times \R^m_0 \times \mathbb{S}^{d-1} \to \R_+$ satisfies ($g1$)-($g4$). \UUU (The dependence of $E$ on subsets of $\Omega$ will be convenient for our applications in Section \ref{sec: gamma}.) For simplicity, we \EEE write $E(u,\Omega) = E(u)$.

We remark that, apart from ($g2$), the assumptions on the bulk and surface densities are standard. In particular, the symmetry condition ($g4$) ensures that $E$ is well defined since $[u]$ is reversed if the orientation of $\nu_u$ is reversed.  Assumption ($g2$) was used in \cite{Caterina}. Among others, it includes the case of densities that are `monotonic' in the jump height $|\zeta|$, see \cite[Remark 3.2]{Caterina} for further details. In the proof of the main compactness result, this condition is necessary to pass to \emph{piecewise translated configurations} without essentially increasing the energy, see Section \ref{sec: piecewise} for details.

The following theorem is the main result of the paper. 

\begin{theorem}[Compactness in $GSBV^p$]\label{th: comp}
Let $\Omega \subset \Omega' \subset \R^d$ be bounded Lipschitz domains. Let $(E_k)_k \subset \mathcal{E}_{\Omega'}$ and let $(h_k)_k \subset W^{1,p}(\Omega';\R^m)$ converging in $L^p(\Omega';\R^m)$ to some $h \in W^{1,p}(\Omega';\R^m)$ such that $(|\nabla h_k|^p)_k$ are equi-integrable. Consider $(u_k)_k  \subset GSBV^p(\Omega';\R^m)$ with $u_k = h_k$ on $\Omega' \setminus \overline{\Omega}$ and  $\sup_{k \in \N} E_k(u_k)  <+\infty$.

\noindent Then we find a  subsequence (not relabeled), modifications $(y_k)_k \subset GSBV^p(\Omega';\R^m)$ satisfying
\begin{align}\label{eq: compi1}
\text{$y_k = h_k$ on $\Omega' \setminus \overline{\Omega}$,} \ \ \ \ \ \   E_k(y_k) \le E_k(u_k) + \tfrac{1}{k},  \ \ \ \ \ \  \mathcal{L}^d \big(\lbrace \nabla y_k \neq \nabla u_k \rbrace \big) \le \tfrac{1}{k},
\end{align}
and a limiting function  $u \in GSBV^p(\Omega';\R^m)$ with $u = h$ on $\Omega' \setminus \overline{\Omega}$ such that 
\begin{align*}
(i)& \ \ \text{$y_k \to u$ in measure on $\Omega'$,}\notag\\ 
(ii)& \ \ \text{$\nabla y_k  \rightharpoonup \nabla u$   weakly in $L^p(\Omega'; \R^{m\times d})$}.
\end{align*}
Moreover,  $\nabla u_k  \rightharpoonup \nabla u$   weakly in $L^p(\Omega'; \R^{m\times d})$, and
\begin{align}\label{eq: jump charac}
\mathcal{H}^{d-1}(J_u) \le \liminf_{k \to \infty} \mathcal{H}^{d-1}(J_{y_k}) \le  \liminf_{k \to \infty} \mathcal{H}^{d-1}(J_{u_k}).
\end{align}

\end{theorem}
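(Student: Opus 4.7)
The plan is to follow the strategy of \cite{Friedrich:15-2, Solombrino}: for each threshold $\theta > 0$ I would construct modifications $(y_k^\theta)_k$ whose energies exceed $(E_k(u_k))_k$ by a controlled amount, and then perform a careful diagonal extraction as $\theta \to 0$. The workhorse is Theorem \ref{th: main poinc}, applied to each $u_k$ (extended by $h_k$ to $\Omega'$): it yields a Caccioppoli partition $\mathcal{P}^k = (P_j^k)_j$ with total perimeter bounded by $2\mathcal{H}^{d-1}(J_{u_k}\cup \partial\Omega) + 1$ and translations $(b_j^k)_j \subset \R^m$ such that $v_k := u_k - \sum_j b_j^k \chi_{P_j^k} \in L^\infty(\Omega';\R^m)$ with $\|v_k\|_{L^\infty}$ controlled by $\|\nabla u_k\|_{L^1(\Omega')}$; by ($f2$) and the energy hypothesis this bound is uniform in $k$.

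Fix $\theta > 0$ and let $\mathcal{J}_\theta^k := \{j \ge 2 : \mathcal{L}^d(P_j^k) < \theta\}$, after relabelling so that $P_1^k$ denotes the distinguished boundary piece containing $\Omega' \setminus \overline{\Omega}$ (well-defined for $\theta < \mathcal{L}^d(\Omega' \setminus \overline{\Omega})$). Define
\begin{equation*}
y_k^\theta := u_k - \sum_{j \in \mathcal{J}_\theta^k} b_j^k \chi_{P_j^k} = v_k + \sum_{j \notin \mathcal{J}_\theta^k} b_j^k \chi_{P_j^k}.
\end{equation*}
Since subtracting piecewise constants does not alter the approximate gradient, $\nabla y_k^\theta = \nabla u_k$ almost everywhere, so the bulk energy is preserved, $\{\nabla y_k^\theta \ne \nabla u_k\}$ is Lebesgue-null, the Dirichlet condition $y_k^\theta = h_k$ on $\Omega' \setminus \overline{\Omega}$ is maintained (no modification on $P_1^k$), and weak $L^p$-convergence of $\nabla u_k$ transfers to $\nabla y_k^\theta$. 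The surface-energy change is the delicate step: $J_{y_k^\theta} \subset J_{u_k} \cup \bigcup_{j \in \mathcal{J}_\theta^k} \partial^* P_j^k$, with the new portion contributing at most $c_5$ per unit $\mathcal{H}^{d-1}$ by ($g3$); on the shared part with $J_{u_k}$ the jump height shifts from $[u_k]$ to $[u_k] - b_j^k$ and the monotonicity ($g2$), used as in the truncation arguments of \cite{Caterina}, controls the new density by $g(x,[u_k],\nu_{u_k})$ up to an additive $\mathcal{H}^{d-1}(\partial^* P_j^k)$ overhead. Altogether,
$$E_k(y_k^\theta) \le E_k(u_k) + C \sum_{j \in \mathcal{J}_\theta^k} \mathcal{H}^{d-1}(\partial^* P_j^k).$$
Since the total partition perimeter is a convergent series, an adaptive choice $\theta = \theta_k$ makes the right-hand tail smaller than $1/(Ck)$, so setting $y_k := y_k^{\theta_k}$ yields both estimates in \eqref{eq: compi1}.

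For the convergence (i), $y_k$ coincides with $v_k$ on the modified pieces and is therefore uniformly bounded in $L^\infty$ there. On the large pieces, Theorem \ref{th: comp cacciop} (together with the remark following it, which permits leaving $P_1^k$ aside) provides a subsequence for which the partitions converge in measure to a limit $(P_j)_j$, so the tail $\sum_{j > N} \mathcal{L}^d(P_j^k)$ is uniformly small in $k$ for $N$ large; on each of the remaining finitely many large pieces a further subsequence is extracted along which $b_j^k$ either stays bounded (and then so does $y_k = v_k + b_j^k$) or diverges in modulus, in which case the diagonal scheme absorbs that piece into $\mathcal{J}_{\theta_k}^k$ at the price of a vanishing additional perimeter. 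Lemma \ref{rig-lemma: concave function2} then produces a concave $\psi$ with $\lim_{t \to \infty} \psi(t) = +\infty$ and $\sup_k \int_{\Omega'} \psi(|y_k|)\,dx < \infty$, so Theorem \ref{th: GSBD comp} gives $y_k \to u$ in measure, weak $L^p$-convergence of $\nabla y_k$, and $\mathcal{H}^{d-1}(J_u) \le \liminf_k \mathcal{H}^{d-1}(J_{y_k})$; the second inequality in \eqref{eq: jump charac} follows from $J_{y_k} \subset J_{u_k} \cup \bigcup_{j \in \mathcal{J}_{\theta_k}^k} \partial^* P_j^k$ and the $1/k$ bound on the added perimeter. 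The hard part will be synchronizing the adaptive threshold $\theta_k$ with the diagonal subsequence extraction so that the Dirichlet datum is preserved while the surface-energy overhead stays at $1/k$; the mild monotonicity ($g2$) is indispensable for the latter and is precisely what is gained over the $GSBD$ construction of \cite{Solombrino}.
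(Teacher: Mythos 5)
Your overall architecture (fixed-$\theta$ piecewise-translated modifications followed by a diagonal extraction, Lemma \ref{rig-lemma: concave function2}, and Ambrosio's theorem) matches the paper's, but the core energy estimate fails. You subtract $b_j^k$ exactly on the pieces of small Lebesgue measure and bound the surface-energy increase by $C\sum_{j\in\mathcal{J}_\theta^k}\mathcal{H}^{d-1}(\partial^* P_j^k)$, claiming an adaptive choice of $\theta_k$ makes this tail at most $1/(Ck)$. That claim is false: for each fixed $k$ the series $\sum_j\mathcal{H}^{d-1}(\partial^* P_j^k)$ converges, but the partition changes with $k$, and the pieces of small measure can carry essentially the \emph{whole} perimeter budget $2\mathcal{H}^{d-1}(J_{u_k}\cup\partial\Omega)+1$ (small volume does not force small total perimeter; if anything the perimeter-to-volume ratio of small pieces is large). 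The canonical hard case --- $u_k$ equal to pairwise very different constants of size $\sim k$ on a family of tiny cubes whose total perimeter is of order one --- defeats the estimate: every such cube must be modified to obtain convergence in measure, and your bound then charges an energy increase of order one, not $1/k$. The same problem breaks the second inequality of \eqref{eq: jump charac} and the bound $\mathcal{H}^{d-1}(J_{y_k})\le\mathcal{H}^{d-1}(J_{u_k})+1/k$.

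The paper avoids paying \emph{any} perimeter on the modified interfaces, and the mechanism is ($g2$) combined with a quantitative separation of translations that Theorem \ref{th: main poinc} alone does not provide: one must arrange that whenever two retained pieces receive different translations, these differ by at least $\alpha\lambda_{\theta,\alpha}\Vert\nabla u_k\Vert_{L^1}$ with $\alpha\sim c_3/\theta$, so that the original jump across the interface dominates $c_3$ times the new jump and ($g2$) yields $g(x,[v],\nu)\le g(x,[u_k],\nu)$ there with no loss. This is precisely Lemma \ref{lemma: key}: pieces with nearby translations are merged (creating no new jump at all), pieces with far-apart translations cost nothing by ($g2$), and the only perimeter actually paid is that of the rest set $R_1$, selected by a pigeonhole over dyadic scales of translation distances so that $\mathcal{H}^{d-1}(\partial^* R_1)\le C\theta\,\mathcal{H}^{d-1}(J_{u_k}\cup\partial\Omega)$. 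You mention ($g2$) but never set up this separation; with the raw translations from Theorem \ref{th: main poinc}, adjacent pieces can have translations comparable to, yet not $c_3$ times larger than, the jump across their interface, and then neither ($g2$) nor the crude $c_5$ bound rescues the estimate. A secondary gap: you invoke Lemma \ref{rig-lemma: concave function2} for the diagonal sequence without verifying its hypothesis (an almost-Cauchy-in-measure property), which in the paper requires the re-synchronization of translations across the levels $\theta_l$ carried out in Step 3 of the proof of Theorem \ref{th: comp-aux}.
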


We emphasize that in general it is indispensable to replace the functions $(u_k)_k$ by certain modifications $(y_k)_k$. Consider, e.g., the sequence $u_k = k \chi_U$ for some set $U \subset \Omega$ of finite perimeter. \RRR Then $E_k(u_k) \le c_5\mathcal{H}^{d-1}(\partial^* U) \UUU + c_2 \mathcal{L}^d(\Omega')\EEE$ by ($f2$) and ($g3$) which is uniformly controlled. \EEE However, $u_k$ does not converge \RRR in measure \EEE  on $U$. 

The idea in the proof is to construct $y_k$ from $u_k$ by subtracting  a function which is piecewise constant \BBB (up to a set of small measure). \EEE This prevents that the \BBB functions `escape to infinity' \EEE on subsets which are completely disconnected  from the rest of the domain by the jump set. The construction \UUU also implies \EEE  that $\nabla y_k$ coincides with $\nabla u_k$ outside \UUU of a  small set  whose measure vanishes \EEE for $k \to \infty$. Thus, $\nabla u_k  \rightharpoonup \nabla u$   weakly in $L^p$ also holds for the original sequence $(u_k)_k$. Moreover, by this construction the jump set is asymptotically not increased, see \eqref{eq: jump charac}.

The result is proved in the following three subsections. In Section \ref{sec: piecewise} we first construct  piecewise translated configurations $(v^\theta_k)_k$ which are bounded in $L^\infty$ \MMM by a constant $C_\theta$ depending on  $\theta$ with $C_\theta \to \infty$ as $\theta \to 0$. \EEE Their energies coincide with the ones  of $(u_k)_k$ \EEE up to a (small) error \BBB  of order \EEE $\theta$. This construction \MMM exploits the monotonicity assumption ($g2$)  and \EEE relies on a suitable piecewise Poincar\'e inequality which is proved in Section \ref{sec: piecewise Poin}. Finally, in Section \ref{sec: main proof} we define  the sequence $(y_k)_k$ by letting $\theta \to 0$ and choosing a diagonal sequence in $(v^\theta_k)_{k,\theta}$.  \MMM The choice of the latter is quite delicate since the $L^\infty$-control $C_\theta$ blows up for $\theta \to 0$. Additional arguments involving Lemma \ref{rig-lemma: concave function2} are necessary to show that we can apply Theorem \ref{th: GSBD comp} on  $(y_k)_k$.  \EEE

\subsection{Piecewise translated configurations}\label{sec: piecewise}

Recall the definition of $GSBV_M^p(\Omega;\R^m)$ in \eqref{eq: GSBVM}. The goal of this section is to prove the following result.

\begin{theorem}[Piecewise translated configurations]\label{thm: translations}
Let $\Omega \subset \R^d$ be a bounded Lipschitz domain. Let $M>0$ and $0 < \theta < 1$. Then there exist constants $C_M = C_M(M,\Omega,  \lbrace c_i \rbrace_i,p )>0$ and  $C_{\theta, M}=C_{\theta, M}(M,  \theta, \Omega, \lbrace c_i \rbrace_i,p )>0$ such that \BBB the following holds: \EEE for each $u \in GSBV^p_M(\Omega;\R^m)$ we find a finite Caccioppoli partition $\Omega = \bigcup_{j=1}^J P_j \cup R$ as well as translations $(t_j)_{j=1}^J$ such that $v:= \sum_{j=1}^J(u - t_j) \chi_{P_j} \in  SBV^p  (\Omega;\R^m)$ \BBB and we have \EEE
\begin{align}\label{eq: main estimates}
(i) & \ \ E(v) \le E(u)  + C_M \theta, \notag \\
(ii) & \ \ \mathcal{H}^{d-1}(J_v) \le \mathcal{H}^{d-1}(J_u) +  C_M\theta, \notag \\
(iii) &  \ \  \Vert v \Vert_{L^\infty(\Omega)} \le C_{\theta, M},\notag \\
(iv) & \ \ \mathcal{L}^d(R) \le C_M\theta, \notag\\
(v) & \ \ \sum\nolimits_{j = 1}^J \mathcal{H}^{d-1} (\partial^* P_j) + \mathcal{H}^{d-1} (\partial^* R) \le C_M
\end{align}
 for all energies $E \in \mathcal{E}_\Omega$. Moreover, we have $\lbrace v = 0\rbrace \supset \lbrace u = 0\rbrace$ \BBB (up to a set of negligible measure). \EEE  Finally, for each collection $(t'_j)_{j=1}^J$ with $|t_j - t_j'| \le \UUU \theta^{-1} \EEE \Vert v\Vert_{L^\infty(\Omega)}$ for $j=1,\ldots,J$, the function $v' := \sum_{j=1}^J(u - t'_j) \chi_{P_j}$ also satisfies 
\begin{align}\label{eq: extra energy}
E(v') \le E(u)  + C_M \theta \ \  \ \text{for all} \ E \in \mathcal{E}_\Omega, \ \ \  \ \ \ \  \mathcal{H}^{d-1}(J_{v'}) \le \mathcal{H}^{d-1}(J_u) +  C_M\theta.
\end{align}
\end{theorem}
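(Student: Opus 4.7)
The plan is to combine the piecewise Poincar\'e inequality developed in Section~\ref{sec: piecewise Poin} (a $\theta$-parametrized refinement of Theorem~\ref{th: main poinc}) with the monotonicity assumption $(g2)$, in the spirit of the truncation arguments of \cite{Caterina}. Applied to $u \in GSBV^p_M(\Omega; \R^m)$, this refined inequality produces a countable Caccioppoli partition $(Q_j)_{j=1}^\infty$ of $\Omega$ and translations $(b_j)_{j=1}^\infty \subset \R^m$ such that $\tilde v := u - \sum_j b_j \chi_{Q_j} \in SBV \cap L^\infty(\Omega; \R^m)$ with $\Vert \tilde v \Vert_{L^\infty(\Omega)} \le C_{\theta, M}$, and the ``extra'' interface $\bigcup_j \partial^* Q_j \setminus (J_u \cup \partial \Omega)$ has $\mathcal{H}^{d-1}$-measure at most $C_M \theta$. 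Since H\"older gives $\Vert \nabla u \Vert_{L^1(\Omega)} \le |\Omega|^{1 - 1/p} M^{1/p}$, the total perimeter $\sum_j \mathcal{H}^{d-1}(\partial^* Q_j)$ is bounded by a constant depending only on $M$ and $\Omega$. The trade-off between small extra perimeter (which forces a large level spacing $\delta$ in the coarea construction) and a small $L^\infty$-bound is the source of the $\theta$-dependence of $C_{\theta, M}$, which blows up as $\theta \to 0$.

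Next, I would pass to a finite partition by ordering $(Q_j)_j$ according to Lebesgue measure, $\mathcal L^d(Q_1) \ge \mathcal L^d(Q_2) \ge \ldots$, and choosing $J$ minimal with $\sum_{j > J} \mathcal L^d(Q_j) \le \theta$ (which exists since $\sum_j \mathcal L^d(Q_j) = |\Omega|$). Setting $R := \bigcup_{j > J} Q_j$, $P_j := Q_j$ and $t_j := b_j$ for $j \le J$, the function $v := \sum_{j=1}^J (u - t_j)\chi_{P_j}$ agrees with $\tilde v$ on $\bigcup_{j=1}^J P_j$ and vanishes on $R$. Properties (iii), (iv), (v) then follow directly: the $L^\infty$-estimate from the bound on $\tilde v$, the measure estimate from the choice of $J$, and the perimeter estimate from the piecewise Poincar\'e bound. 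Since $J_v \subset J_u \cup \bigcup_{j=1}^J \partial^* P_j \cup \partial^* R$, and the $\mathcal{H}^{d-1}$-measure of the latter two sets off $J_u$ is bounded by the extra perimeter, the jump estimate (ii) reduces to $\mathcal{H}^{d-1}(J_v \setminus J_u) \le C_M \theta$.

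The core of the argument is the energy estimate (i), where $(g2)$ plays a decisive role. The bulk term is elementary: $\nabla v = \nabla u$ on $\bigcup_j P_j$, while on $R$ one has $\nabla v = 0$ contributing at most $c_2 |R| \le c_2 \theta$ by $(f2)$. For the surface term I would decompose $J_v$ into (a) points of $J_u$ interior to a single cell $P_j$, where $[v] = [u]$ and the integrand is unchanged; (b) points of $J_u$ lying on a shared boundary $\partial^* P_i \cap \partial^* P_j$ or $\partial^* P_j \cap \partial^* R$; and (c) points outside $J_u$, of total $\mathcal{H}^{d-1}$-measure $\le C_M \theta$, contributing $\le c_5 C_M \theta$ by $(g3)$. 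Case (b) is the delicate one: since the cells of the Poincar\'e partition arise from level sets of the components of $u$ at spacing $\delta$, a point of $J_u$ can lie on a shared boundary only when $|[u]| \ge \delta$. On the other hand, since $\tilde v \in L^\infty$, the approximate traces satisfy $|u^{\pm} - t_{i/j}| \le C_{\theta, M}$ $\mathcal{H}^{d-1}$-a.e., whence $|[v]| = |[u] - (t_i - t_j)| \le 2 C_{\theta, M}$ on (b). Calibrating $\delta$ (hence $C_{\theta, M}$) so that $2 c_3 C_{\theta, M} \le \delta$ ensures $c_3 |[v]| \le |[u]|$, and $(g2)$ together with $(g4)$ then yields $g(x, [v], \nu_v) \le g(x, [u], \nu_u)$ pointwise on (b). Summing (a)--(c) gives the required $E(v) \le E(u) + C_M \theta$.

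Finally, the stability clause for $v'$ with $|t'_j - t_j| \le \theta^{-1} \Vert v \Vert_{L^\infty(\Omega)}$ proceeds analogously: on case (b) one has $|[v']| \le |[v]| + 2 \theta^{-1} \Vert v \Vert_{L^\infty(\Omega)}$, which remains dominated by $|[u]| \ge \delta$ provided the refined Poincar\'e is set up with $\delta$ large enough to absorb the factor $\theta^{-1}$ (in practice, $\delta$ of order $\theta^{-2}$ suffices). The inclusion $\{v = 0\} \supset \{u = 0\}$ is arranged by choosing $b_j = 0$ whenever $Q_j \subset \{u = 0\}$, which is the natural choice in the coarea-based construction of the partition. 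The principal obstacle in the whole argument is case (b): without $(g2)$, the shared-boundary contributions could give an $O(1)$ surface energy error that cannot be absorbed into $C_M \theta$. Hence the simultaneous calibration of the level spacing $\delta$, the $L^\infty$-bound $C_{\theta, M}$, and the monotonicity factor $c_3$ is the crucial technical point of the proof.
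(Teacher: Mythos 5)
Your overall architecture (piecewise Poincar\'e partition, finite truncation to a rest set, decomposition of $J_v$ into interior points, shared boundaries, and new interfaces, with $(g2)$ invoked on the shared boundaries) matches the paper's. But there is a genuine gap at the crucial point, case (b). You claim that, because the cells arise from level sets of the components of $u$ at spacing $\delta$, ``a point of $J_u$ can lie on a shared boundary only when $|[u]| \ge \delta$.'' This is false. If $x \in J_u$ has traces with $u_l^-(x)$ just below and $u_l^+(x)$ just above a chosen threshold $k\delta + s$, then $x$ lies on $\partial^* Q_i \cap \partial^* Q_j$ for two distinct level cells even though $|[u](x)|$ is arbitrarily small; the averaging over the offset $s$ controls only the part of the level-set boundaries \emph{outside} $J_u$, not the occurrence of small jumps straddling a threshold, and the set of such points can carry a fixed positive fraction of $\mathcal{H}^{d-1}(J_u)$. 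At such a point $[v] = [u] - (t_i - t_j)$ with $|t_i - t_j|$ of order $\delta$, so $|[v]|$ is large while $|[u]|$ is tiny, the calibration $c_3|[v]| \le |[u]|$ fails, $(g2)$ is inapplicable, and the surface energy can increase by $(c_5-c_4)\,O(\mathcal{H}^{d-1}(J_u)) = O(M)$ rather than $O(\theta)$. The same problem occurs on $\partial^* P_j \cap \partial^* R$ for jump points of $u$ where the value of $u$ on the rest set happens to be close to its value on $P_j$.

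The missing idea is the regrouping step: after the coarea-based partition one must merge cells whose translations are close and keep apart only groups whose translations differ by a large multiple $\alpha$ of the $L^\infty$-bound (this is Lemma~\ref{lemma: balls} and Step~2 of Lemma~\ref{lemma: key}, with $\alpha \sim \theta^{-1}c_3$). After regrouping, a shared boundary between two surviving components satisfies $|[u]| \ge |b_i - b_j| - 2\lambda\Vert\nabla u\Vert_{L^1} \ge (\alpha-2)\lambda\Vert\nabla u\Vert_{L^1}$ by the separation \eqref{eq:II-new}(iii) combined with the trace bounds \eqref{eq:II-new}(i), so the lower bound on $|[u]|$ comes from the separation of the \emph{translations}, not from the level spacing; small jumps straddling a threshold end up in the interior of a single regrouped component, where $[v]=[u]$. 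The rest set must likewise be split into a part $R_2$ whose values are well separated from every $b_j$ (so that $\partial^* P_j \cap \partial^* R_2 \subset J_u$ with $|[u]|$ large there) and a part $R_1$ whose perimeter is $O(\theta)$. Without this regrouping your proof of \eqref{eq: main estimates}(i) does not go through, and the stability clause \eqref{eq: extra energy} inherits the same defect.
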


Outside the \emph{rest set} $R$, $v$ arises from $u$ by subtracting a piecewise constant function. Therefore, we call $v$ a \emph{piecewise translated configuration}. The rest set is related to a piecewise Poincar\'e inequality, see Lemma  \ref{lemma: key} below \UUU and the comments thereafter. \EEE 

A similar result has been derived in \cite[Theorem 4.1]{Solombrino} for a two-dimensional Griffith model in $SBD$ where piecewise rigid motions are subtracted to obtain uniformly bounded functions. \MMM If the density $g$ in \eqref{eq: main energy functional} is constant (as in \cite{Solombrino}), property \eqref{eq: main estimates}(i) follows essentially from \eqref{eq: main estimates}(ii). If, however, $g$ depends explicitly on the jump height, the energy is in general affected by passing to piecewise translated configurations. In this case, the proof is much more delicate: the components  $(P_j)_{j=1}^J$ and the constants $(t_j)_{j=1}^J$ have to be chosen in a careful way, and one needs to use ($g2$) to ensure the energy estimate \eqref{eq: main estimates}(i). This is subject of Lemma \ref{lemma: key} below which is a refinement of Theorem \ref{th: main poinc}. \EEE In the proof we will combine the strategy in \cite{Solombrino} with ideas inspired by a truncation method for $GSBV$ functions \cite{Caterina}. 

We remark that truncations, as used in \cite{BDV, Caterina}, also yield a uniform bound of the form \eqref{eq: main estimates}(iii). In that case, however,   in the energy estimate \eqref{eq: main estimates}(i), an additional term $c_2 \mathcal{L}^d(\lbrace |u| \ge \lambda \rbrace)$ occurs, where $\lambda$ represents the level of truncation \UUU (see, e.g., \cite[Lemma 4.1]{Caterina}). \EEE Along a sequence $(u_k)_k$ from Theorem \ref{th: comp},  we cannot expect that $ \mathcal{L}^d(\lbrace|u_k| \ge \lambda \rbrace) \to 0$ as $k \to \infty$. Thus, truncations could perturb the energy significantly and are thus not expedient in the present context. 

We now formulate a version of Theorem \ref{thm: translations} for functions satisfying boundary conditions. 

\begin{corollary}[Piecewise translated configurations with boundary conditions]\label{cor: translations-bdy} 
Let $\Omega \subset \Omega' \subset \R^d$ be bounded Lipschitz domains. Let $M>0$, $0 < \theta < 1$. Then there exist constants $C_M = C_M(M, \Omega', \lbrace c_i \rbrace_i,p )>0$ and  $C_{\theta, M}=C_{\theta, M}(M,  \theta, \Omega', \lbrace c_i \rbrace_i,p )>0$ such that the following holds:  for each $h \in W^{1,p}(\Omega'; \R^m)$ \UUU with $\Vert \nabla h \Vert^p_{L^p(\Omega')} \le M$ \EEE and each $u \in GSBV^p_M(\Omega';\R^m)$ with $u = h$ on $\Omega' \setminus \overline{\Omega}$ we find a finite Caccioppoli partition $\Omega' = \bigcup_{j=1}^J P_j \cup R$ as well as translations $(t_j)_{j=1}^J$ such that $v:= h \chi_R + \sum_{j=1}^J(u - t_j) \chi_{P_j} \in SBV^p  (\Omega';\R^m)$
satisfies $v = h$ on $\Omega' \setminus \overline{\Omega}$ and
\begin{align}\label{eq: small set main-boundary}
(i) & \ \ E(v) \le E(u)  + C_M \theta   + C_M \Vert \nabla h \Vert_{L^p(R)}^p, \notag \\
(ii) & \ \ \mathcal{H}^{d-1}(J_v) \le \mathcal{H}^{d-1}(J_u) + C_M\theta, \notag \\
(iii) &  \ \  \Vert v -h\Vert_{L^\infty(\Omega')} \le C_{\theta, M},\notag \\
(iv) & \ \ \mathcal{L}^d(R) \le C_M\theta, \notag\\
(v) & \ \ \sum\nolimits_{j = 1}^J \mathcal{H}^{d-1} (\partial^* P_j) + \mathcal{H}^{d-1} (\partial^* R) \le C_M
\end{align}
 for all energies $E \in \mathcal{E}_{\Omega'}$. Moreover, for each collection $(t'_j)_{j=1}^J$ with $|t_j - t_j'| \le \UUU \theta^{-1} \EEE \Vert v-h\Vert_{L^\infty(\Omega')}$ for $j=1,\ldots,J$, the function $v' := h \chi_R +  \sum_{j=1}^J(u - t'_j) \chi_{P_j}$ also satisfies
\begin{align}\label{eq: extra energy2}
E(v') \le E(u)  + C_M \theta  + C_M \Vert \nabla h \Vert_{L^p(R)}^p, \ \ \ \ \ \ \  \mathcal{H}^{d-1}(J_{v'}) \le \mathcal{H}^{d-1}(J_u) + C_M\theta
\end{align}
for all $E \in \mathcal{E}_{\Omega'}$. Finally, there is at most one component $P_j$ intersecting $\Omega' \setminus \overline{\Omega}$.  
\end{corollary}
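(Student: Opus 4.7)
The plan is to reduce the statement to Theorem \ref{thm: translations} by passing to the shifted function $\tilde u := u - h$. Since $h \in W^{1,p}(\Omega';\R^m)$ is continuous and so has empty jump set, $J_{\tilde u} = J_u$; together with $\|\nabla \tilde u\|_{L^p(\Omega')}^p \le 2^{p-1}(\|\nabla u\|_{L^p(\Omega')}^p + \|\nabla h\|_{L^p(\Omega')}^p) \le 2^p M$, this gives $\tilde u \in GSBV^p_{M'}(\Omega';\R^m)$ for $M' = (2^p+1)M$. Moreover $\tilde u = 0$ on $\Omega'\setminus\overline{\Omega}$. Applying Theorem \ref{thm: translations} to $\tilde u$ on $\Omega'$ yields a Caccioppoli partition $\Omega' = \bigcup_{j=1}^J P_j \cup R$, translations $(t_j)_{j=1}^J$, and $\tilde v := \sum_j(\tilde u - t_j)\chi_{P_j}$ satisfying the five properties there (with $M'$ in place of $M$). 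The crucial consequence of $\{\tilde v = 0\}\supset\{\tilde u = 0\}$ is that every component $P_j$ with $\mathcal{L}^d(P_j\cap(\Omega'\setminus\overline{\Omega}))>0$ must carry $t_j = 0$; otherwise $\tilde v \equiv -t_j \ne 0$ on a positive-measure subset of $\{\tilde u = 0\}$.

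This makes the following definition natural:
\[
v := h\,\chi_R + \sum_{j=1}^J (u - t_j)\,\chi_{P_j},
\]
and a direct check gives $v = \tilde v + h$ on $\Omega'$. Hence $v = h$ on $\Omega'\setminus\overline{\Omega}$ (each point lies either in $R$, where $v=h$, or in a component with $t_j = 0$, where $v = u = h$). Since $h$ is continuous, $J_v = J_{\tilde v}$, $[v] = [\tilde v]$, $\nu_v = \nu_{\tilde v}$, $\nabla v = \nabla u$ on $\bigcup_j P_j$, and $\nabla v = \nabla h$ on $R$. Properties (iii), (iv), (v) of \eqref{eq: small set main-boundary} transfer directly from Theorem \ref{thm: translations}, and (ii) follows from $J_{\tilde u} = J_u$ combined with Theorem \ref{thm: translations}(ii). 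The bulk part of (i) splits cleanly as
\[
\int_{\Omega'} f(x,\nabla v)\,dx = \sum_j \int_{P_j} f(x,\nabla u)\,dx + \int_R f(x,\nabla h)\,dx \le \int_{\Omega'} f(x,\nabla u)\,dx + c_2 C_{M'}\theta + c_2\|\nabla h\|_{L^p(R)}^p,
\]
yielding the $\|\nabla h\|_{L^p(R)}^p$-term in \eqref{eq: small set main-boundary}(i). The final assertion is enforced by merging any several components meeting $\Omega'\setminus\overline{\Omega}$ into one set: since all carry $t_j = 0$, $v$ is unchanged, the new interfaces sit in the outer region where $u = h$ is continuous and so create no additional jumps, and the perimeter bound in (v) only improves.

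The main technical point will be the surface part of (i). One cannot simply invoke Theorem \ref{thm: translations}(i) for $\tilde u$ with the original $E$, because $E(\tilde u)$ depends on $\nabla u - \nabla h$ and leaves an uncontrolled bulk mismatch that does not vanish with $\theta$. Instead, the estimate has to be obtained by re-tracing the jump bookkeeping of the proof of Theorem \ref{thm: translations}(i) in the present setting: jumps of $v$ inside $\bigcup_j P_j^\circ$ coincide with those of $u$, so their $g$-cost is unchanged; the new jumps on $\bigcup_j \partial^* P_j \cup \partial^* R$ have heights bounded by $\|v - h\|_{L^\infty(\Omega')} = \|\tilde v\|_{L^\infty(\Omega')} \le C_{\theta,M}$, and by ($g2$)-($g3$) their $g$-cost is absorbed into the $C_M\theta$-error exactly as in the interior proof. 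The perturbation estimate \eqref{eq: extra energy2} then follows from \eqref{eq: extra energy} applied to $\tilde u$, since the admissible shifts $(t_j')$ leave the bulk unaffected and only modify interface jumps, where the same ($g2$)-($g3$) control applies.
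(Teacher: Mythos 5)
Your proposal is correct and follows essentially the same route as the paper: apply Theorem \ref{thm: translations} to $u-h$, set $v=\bar v+h$, treat the bulk term separately on $R$ to produce the $\Vert\nabla h\Vert^p_{L^p(R)}$ correction, and observe that the surface bookkeeping of the interior proof (the paper simply cites its intermediate estimate \eqref{eq: g for later}) transfers verbatim because $J_{u-h}=J_u$ and $[u-h]=[u]$. The only cosmetic difference is that the paper deduces ``at most one component meets $\Omega'\setminus\overline\Omega$'' from Remark \ref{rem: easy} (at most one $t_j$ vanishes) rather than by your merging argument, which is equally valid.
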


\BBB The idea in the proof is to apply Theorem \ref{thm: translations} on $u-h$. The property \EEE  that at most one component   intersects $\Omega' \setminus \overline{\Omega}$ can be seen as follows: for each $P_j$ intersecting   $\Omega' \setminus \overline{\Omega}$ we have $t_j = 0$ since $u=v = h$ on $\Omega' \setminus \overline{\Omega}$. Thus, if different components  intersected   $\Omega' \setminus \overline{\Omega}$, \BBB they could simply \EEE be combined to just one component. We again remark that truncations  \cite{BDV, Caterina}  can not be applied here since they in general do not preserve boundary conditions.

Corollary \ref{cor: translations-bdy} implies the following approximation result, which we will use in Section \ref{sec: gamma}. 

\begin{corollary}[Approximation by $L^p$ functions]\label{cor: truncations} 
Let $\Omega \subset \Omega' \subset \R^d$ be bounded Lipschitz domains. Let $h \in W^{1,p}(\Omega'; \R^m)$ and $E \in \mathcal{E}_{\Omega'}$. Then  for each $u \in GSBV^p(\Omega';\R^m)$ with $u = h$ on $\Omega' \setminus \overline{\Omega}$ we find a sequence $(u_k)_k \subset GSBV^p(\Omega';\R^m) \cap L^p(\Omega';\R^m)$ with $u_k = h$ on $\Omega' \setminus \overline{\Omega}$ such that $u_k \to u$ in measure on $\Omega'$ and   $\limsup_{k \to \infty} E(u_k)\le E(u)$.
\end{corollary}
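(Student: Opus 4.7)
The plan is to build the approximating sequence by truncating the difference $u-h$, since $u-h$ vanishes on $\Omega'\setminus\overline{\Omega}$ and inherits the jump structure of $u$. Because $h\in W^{1,p}(\Omega';\R^m)$ has empty jump set, $u-h$ belongs to $GSBV^p(\Omega';\R^m)$ with $\nabla(u-h)=\nabla u-\nabla h$, $J_{u-h}=J_u$, and $[u-h]=[u]$.

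I would define $u_k := h + \tau_k(u-h)$, where $\tau_k\colon\R^m\to\R^m$ is the radial $1$-Lipschitz truncation at height $k$ (the identity for $|\xi|\le k$ and $k\xi/|\xi|$ otherwise). The basic properties are then routine: by the $GSBV$ chain rule, $u_k\in GSBV^p(\Omega';\R^m)$; the bound $\Vert u_k-h\Vert_{L^\infty(\Omega')}\le k$ together with $h\in L^p$ places $u_k$ in $L^p(\Omega';\R^m)$; the identity $\tau_k(0)=0$ gives $u_k=h$ on $\Omega'\setminus\overline{\Omega}$; and $u_k=u$ on $\{|u-h|\le k\}$ yields $u_k\to u$ in measure, since $u-h$ is a.e.\ finite so $\mathcal L^d(\{|u-h|>k\})\to 0$.

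The substantive step is to verify $\limsup_{k}E(u_k)\le E(u)$, which I would do by separating bulk and surface contributions. For the bulk, the chain rule gives $\nabla u_k=\nabla u$ on $\{|u-h|<k\}$ and $|\nabla u_k|^p\le C(|\nabla u|^p+|\nabla h|^p)$ everywhere, so ($f2$) and dominated convergence yield $\int_{\Omega'}f(x,\nabla u_k)\,dx\to\int_{\Omega'}f(x,\nabla u)\,dx$. For the surface, $J_{u_k}\subset J_u$; on $A_k:=J_u\cap\{|u^+-h|\le k,\,|u^--h|\le k\}$ the truncation is trivial, so $[u_k]=[u]$ and $\nu_{u_k}=\nu_u$ and the integrands coincide, while on $B_k:=J_u\setminus A_k$ I would use the upper bound $g\le c_5$ from ($g3$) together with $\mathcal H^{d-1}(B_k)\to 0$, since the traces $u^\pm$ are $\mathcal H^{d-1}$-a.e.\ finite on $J_u$.

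The main (mild) obstacle is justifying the $GSBV$ chain-rule formulas $\nabla(\tau_k(u-h))=D\tau_k(u-h)(\nabla u-\nabla h)$ a.e.\ and $[\tau_k(u-h)]=\tau_k(u^+-h)-\tau_k(u^--h)$ on $J_u$; both are standard and follow from \cite[Theorem 3.96]{Ambrosio-Fusco-Pallara:2000}, and notably the monotonicity assumption ($g2$) plays no role. Alternatively, in the spirit of the preceding paragraph of the paper, one may apply Corollary \ref{cor: translations-bdy} with $\theta_k\to 0$ and then exploit the freedom in choosing modified translations $t'_{j,k}$ — setting them to zero on components where the admissible displacement $\theta_k^{-1}\Vert v_k-h\Vert_{L^\infty(\Omega')}$ allows — so that the remaining exceptional components are confined to a set of the form $\{|u-h|\ge(\theta_k^{-1}-1)\Vert v_k-h\Vert_{L^\infty(\Omega')}\}$, of vanishing $\mathcal L^d$-measure.
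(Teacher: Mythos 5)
Your proposal is correct, but it proves the corollary by a genuinely different and more elementary route than the paper. The paper's proof does not truncate at all: it applies Corollary \ref{cor: translations-bdy} with $\theta_k = 1/k$, obtains piecewise translated functions $u_k = h\chi_{R^k} + \sum_j (u-t_j^k)\chi_{P_j^k}$ (which are in $L^p$ by the $L^\infty$-bound \eqref{eq: small set main-boundary}(iii) and satisfy $\limsup_k E(u_k)\le E(u)$ by \eqref{eq: small set main-boundary}(i),(iv)), and then deduces measure convergence from the structural fact that exactly one component meets $\Omega'\setminus\overline{\Omega}$, carries the translation $t_1^k=0$, and — by Remark \ref{rem: easy} — all other components are contained in $\lbrace |u-h|\ge c_3 k\Vert\nabla u-\nabla h\Vert_{L^1}\rbrace$, a set whose measure vanishes since $u-h$ is a.e.\ finite. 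Your truncation of $u-h$ sidesteps the paper's stated objection that "truncations do not preserve boundary conditions" (that objection concerns truncating $u$ itself, and, more importantly, the non-vanishing error term $c_2\mathcal{L}^d(\lbrace|u_k|\ge\lambda\rbrace)$ along a \emph{sequence}; for a single fixed $u$ this error is $c_2\mathcal{L}^d(\lbrace|u-h|\ge k\rbrace)\to 0$ and is harmless). Your argument is shorter, bypasses the piecewise Poincar\'e machinery entirely, and, as you observe, never invokes ($g2$); what the paper's route buys is only the reuse of Corollary \ref{cor: translations-bdy}, which is needed anyway for the main theorem. Two small points to tighten: the radial truncation is Lipschitz but not $C^1$ at $|\xi|=k$, so to apply the chain rule cleanly you should replace it by a smooth map equal to the identity on $B_k(0)$ with image in $B_{2k}(0)$ and bounded differential, exactly as in \cite[Lemma 4.1]{Caterina} (this changes nothing in the estimates); and the step $\mathcal{H}^{d-1}(B_k)\to 0$ rests on the finiteness of the traces $u^\pm$ at $\mathcal{H}^{d-1}$-a.e.\ point of $J_u$, which holds for $u\in GSBV^p$ with finite energy and is implicitly assumed for the surface integral in \eqref{eq: main energy functional} to be defined, but deserves a word. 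Your closing alternative via Corollary \ref{cor: translations-bdy} with modified translations is essentially the paper's argument in disguise.
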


A key ingredient for the proof of Theorem \ref{thm: translations}--Corollary \ref{cor: truncations} will be the following result, which is a refinement of the piecewise Poincar\'e inequality stated in Theorem \ref{th: main poinc}.

\begin{lemma}[Piecewise Poincar\'e inequality with additional control on translations]\label{lemma: key}
Let $\Omega \subset \R^d$ be a bounded Lipschitz domain. Let $\alpha \ge 1$ and  $0 < \theta < 1$. Then there exist  constants $C_\Omega = C_\Omega(\Omega) \UUU \ge 1\EEE$ and $C_{\theta,\alpha} = C_{\theta,\alpha}(\theta,\alpha)>0$ such that the following holds: for each $u \in GSBV^p(\Omega;\R^m)$ we find
 a finite Caccioppoli partition $\Omega = \bigcup_{j=1}^J P_j \cup R_1 \cup R_2$ with 
\begin{align}\label{eq: R1R2-new} 
(i) & \ \ \mathcal{L}^d(R_1 \cup R_2) \le C_\Omega\theta  \, \mathcal{H}^{d-1}(J_u \cup \partial \Omega), \notag \\
(ii) & \ \ \mathcal{H}^{d-1}(\partial^* R_1) \le C_\Omega\theta \, \mathcal{H}^{d-1}(J_u \cup \partial \Omega), \notag \\
(iii) & \ \ \sum\nolimits_{j=1}^J \mathcal{H}^{d-1}(\partial^* P_j) + \mathcal{H}^{d-1}(\partial^* R_2) \le C_\Omega\mathcal{H}^{d-1}(J_u \cup \partial \Omega) 
\end{align}
as well as translations $(b_j)_{j=1}^J$ and $\lambda_{\theta,\alpha} \in [ \UUU 1, \EEE C_{\theta,\alpha}]$ such that
\begin{align}\label{eq:II-new}
(i) &  \ \ \Vert u - b_j \Vert_{L^\infty(P_j)} \le \lambda_{\theta,\alpha} \Vert \nabla u \Vert_{L^1(\Omega)} \ \ \ \ \text{for} \ 1 \le j \le J,\notag\\
(ii) & \ \ \min\nolimits_{1 \le j \le J} \  {\rm ess \, inf} \lbrace  |u(x) -  b_j|: \, x \in R_2 \rbrace  \ge  \alpha \lambda_{\theta,\alpha} \Vert \nabla u \Vert_{L^1(\Omega)} , \notag \\
(iii) & \ \ |b_i- b_j| \UUU  > \EEE   \alpha \lambda_{\theta,\alpha} \Vert \nabla u \Vert_{L^1(\Omega)} \ \ \ \ \text{for} \ 1 \le i < j \le J.
\end{align}
\end{lemma}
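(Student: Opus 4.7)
The natural starting point is Theorem \ref{th: main poinc}: applied to $u$, it yields a countable Caccioppoli partition $(Q_k)_{k\ge 1}$ of $\Omega$ together with translations $(c_k)_{k\ge 1}\subset \R^m$ satisfying $\Vert u - c_k\Vert_{L^\infty(Q_k)}\le C_0 \rho_0$, where $\rho_0:=\Vert\nabla u\Vert_{L^1(\Omega)}$, and $\sum_k \mathcal H^{d-1}(\partial^* Q_k)\le 2\mathcal H^{d-1}(J_u\cup\partial\Omega)+1$. The lemma then asks for three refinements: reducing to a \emph{finite} subfamily, \emph{clustering} the surviving translations so they are pairwise separated by more than $\alpha\lambda_{\theta,\alpha}\rho_0$, and collecting the leftover pieces into $R_1$ (small measure \emph{and} small perimeter) and $R_2$ (small measure, moderate perimeter, with $u$ at distance $\ge \alpha\lambda_{\theta,\alpha}\rho_0$ from every $b_j$).

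The first step is straightforward: reorder the $Q_k$'s by decreasing Lebesgue measure and choose $J_0=J_0(u,\theta)$ so large that both $\sum_{k>J_0}\mathcal L^d(Q_k)$ and $\sum_{k>J_0}\mathcal H^{d-1}(\partial^* Q_k)$ are at most $\tfrac12 C_\Omega\theta\,\mathcal H^{d-1}(J_u\cup\partial\Omega)$; the tail will be absorbed into $R_1$. The $u$-dependence of $J_0$ is harmless because the final $\lambda_{\theta,\alpha}$ will be secured not by controlling $J_0$ but by the multi-scale device below.

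For the clustering, I run a greedy procedure at each of $N := \lceil C/\theta\rceil$ geometrically spaced scales $\lambda^{(1)}<\cdots<\lambda^{(N)}$, chosen with ratios large in $\alpha$ (e.g.\ $\lambda^{(i+1)} = (\alpha+3)\lambda^{(i)}+C_0$) so that the critical annulus at one scale is swallowed by the cluster ball at the next. At scale $\lambda^{(i)}$, I process $c_1,\dots,c_{J_0}$ in a fixed order (say, decreasing $\mathcal L^d(Q_k)$), iteratively adding $c_k$ as a new centre $b^{(i)}_j$ whenever it lies at distance $>\alpha\lambda^{(i)}\rho_0$ from every existing centre; otherwise $Q_k$ is attached to the cluster whose centre satisfies $|c_k-b^{(i)}_j|\le(\lambda^{(i)}-C_0)\rho_0$, or, failing that, placed in the \emph{outlier set} $S^{(i)}$ if $\min_j|c_k-b^{(i)}_j|>(\alpha\lambda^{(i)}+C_0)\rho_0$, or in the \emph{transitional set} $T^{(i)}$ in the intermediate case. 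The centres are then pairwise separated by $>\alpha\lambda^{(i)}\rho_0$ and each cluster has radius $\le(\lambda^{(i)}-C_0)\rho_0$. The scale spacing ensures that each $Q_k$ can lie in $T^{(i)}$ for only a bounded number of $i$, yielding
\[\sum_{i=1}^N\bigl(\mathcal L^d(T^{(i)})+\mathcal H^{d-1}(\partial^* T^{(i)})\bigr)\le C\bigl(\mathcal L^d(\Omega)+\mathcal H^{d-1}(J_u\cup\partial\Omega)+1\bigr).\]
A pigeonhole over the $N$ scales now produces an $i^*$ for which $T^{(i^*)}$ has measure and perimeter at most $\tfrac12 C_\Omega\theta\,\mathcal H^{d-1}(J_u\cup\partial\Omega)$. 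Setting $\lambda_{\theta,\alpha}:=\lambda^{(i^*)}$, $b_j:=b^{(i^*)}_j$, $P_j$ the union of the $Q_k$'s assigned to cluster $j$ at scale $i^*$, $R_2:=S^{(i^*)}$, and $R_1:=\bigcup_{k>J_0}Q_k\cup T^{(i^*)}$, all the estimates in \eqref{eq: R1R2-new}--\eqref{eq:II-new} follow by direct verification.

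The main difficulty lies in this pigeonhole step: the greedy centres $b^{(i)}_j$ themselves depend on $i$, so the transitional annuli at different scales sit around different points of $\R^m$. One must run the procedure consistently across scales --- preferably so that the centres at scale $i+1$ form a subfamily of those at scale $i$ --- to ensure that a $c_k$ marked transitional at scale $i$ is absorbed into a cluster or moved to outliers at scale $i+1$. Making this bookkeeping rigorous while keeping $\lambda_{\theta,\alpha}$ uniform in $u$ is the technical heart of the proof.
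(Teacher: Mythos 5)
Your overall architecture --- truncate the partition from Theorem \ref{th: main poinc} to finitely many components, cluster the surviving translations, and pigeonhole over geometrically spaced scales to make the leftover set small --- points in the right direction, but the step you yourself defer as ``the technical heart'' (consistency of the greedy clustering across scales) is precisely where the argument breaks, and it is not mere bookkeeping. The greedy centre sets $\lbrace b^{(i)}_j\rbrace_j$ produced at different thresholds are in general not nested: for translations at $0,5,9$ on a line, processed in that order, thresholds $4$ and $6$ select $\lbrace 0,5\rbrace$ and $\lbrace 0,9\rbrace$ respectively. Hence a component that is transitional at scale $i$ need not be absorbed at scale $i+1$; it can re-enter $T^{(i)}$ for arbitrarily many $i$, so the bound $\sum_i\mathcal H^{d-1}(\partial^* T^{(i)})\le C\,\mathcal H^{d-1}(J_u\cup\partial\Omega)$ is unjustified and the pigeonhole yields no good scale $i^*$. (A smaller internal inconsistency: under your rules every $c_k$ farther than $\alpha\lambda^{(i)}\rho_0$ from all existing centres becomes a new centre, so the outlier condition $\min_j|c_k-b^{(i)}_j|>(\alpha\lambda^{(i)}+C_0)\rho_0$ can never fire and $S^{(i)}=R_2=\emptyset$.) The root cause is that you retain $J_0=J_0(u)$ components --- an unbounded, $u$-dependent number --- and then try to perform the separation and the pigeonhole simultaneously on a scale-dependent family of centres.

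The paper decouples the two tasks. It first keeps only the components with $\mathcal L^d(P'_j)\ge\theta^d\mathcal L^d(\Omega)$, of which there are at most $\theta^{-d}$ --- a bound independent of $u$ --- and classifies every remaining small component by the distance of its translation to this \emph{fixed} finite set of anchor translations, into geometric annuli $\mathcal J^k$. Since this classification does not vary with the scale index, each small component lies in exactly one annulus, and the pigeonhole over $k\le\theta^{-1}$ is immediate: one annulus has total perimeter at most $C_\Omega\theta\,\mathcal H^{d-1}(J_u\cup\partial\Omega)$ and becomes $R_1$, the farther annuli become $R_2$ (which is why $R_2$ is genuinely needed: those components cannot all be promoted to clusters without losing the uniform control on $\lambda_{\theta,\alpha}$), and the nearer ones are merged into the anchors. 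Only afterwards is the separation \eqref{eq:II-new}(iii) enforced, by applying the ball-merging Lemma \ref{lemma: balls} to the at most $\theta^{-d}$ anchor translations; because their number is $u$-independent, the radii grow by at most the factor $(4\alpha)^{\theta^{-d}}$, which is exactly what makes $\lambda_{\theta,\alpha}\le C_{\theta,\alpha}$ uniform in $u$. To repair your proof you would essentially have to import this reduction to a $u$-independently bounded anchor set; as written, the multi-scale greedy does not close.
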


\MMM We briefly comment on the statement of Lemma \ref{lemma: key}. \EEE Property \eqref{eq:II-new}(i) is an estimate of Poincar\'e-type on the components $P_j$. In contrast to Theorem \ref{th: main poinc}, the estimate has the additional property that the difference of the translations can be \BBB controlled \EEE from below in terms of the parameter $\alpha$, \UUU see \eqref{eq:II-new}(iii). \EEE The choice $\alpha \gg 1$ then implies that the values of $u$ on different components $(P_j)_j$ are `well separated', see \eqref{eq:II-new}(i),(iii). This will eventually allow us to exploit ($g2$) in the proof of Theorem \ref{thm: translations} \MMM and to show the energy estimate \eqref{eq: main estimates}(i). \EEE 

\MMM  The main idea to achieve \eqref{eq:II-new}(i),(iii) is as  follows: note that the components and translations given by Theorem \ref{th: main poinc} (or even just subsets of them) do possibly not satisfy \eqref{eq:II-new}(iii). The strategy is to sort the indices into different groups by means of Lemma \ref{lemma: balls}  below such that (a) the translations in each group are close to each other (in terms of a constant $\lambda_{\theta,\alpha}$), and (b) the translations in different groups differ very much (in terms of $\alpha \lambda_{\theta,\alpha}$). Then a new partition is defined by combining the components of each group and by defining new translations accordingly. We point out that the grouping of the indices and the explicit choice of $\lambda_{\theta,\alpha}$ depend on $u$, but $\lambda_{\theta,\alpha}$ always lies in the interval $[1,C_{\theta,\alpha}]$ independent of $u$. \EEE

Note that this refined Poincar\'e estimate comes at the expense of two \emph{rest sets} $R_1$ and $R_2$. For $R_2$ we have  \eqref{eq:II-new}(ii) which again  means that the values of $u$ on each component $P_j$ and $R_2$ are `well separated'. Finally, for $R_1$ we will exploit that \UUU the $\mathcal{H}^{d-1}$-measure of \EEE its boundary is small in terms of $\theta$, cf. \eqref{eq: R1R2-new}(ii). We remark that the necessity of rest sets is obvious if one considers functions with dense image in $\R^m$: \UUU in fact, the image of $u$ restricted to $\bigcup_{j=1}^J P_j$ is contained in  $\bigcup_{j=1}^J B_r(b_j)$ with $r = \lambda_{\theta,\alpha} \Vert \nabla u \Vert_{L^1(\Omega)}$ which does not cover $\R^m$ for $\alpha \ge 2$, cf.\ \eqref{eq:II-new}(i),(iii). \EEE

We defer the proof of Lemma \ref{lemma: key} to Section \ref{sec: piecewise Poin} and proceed with the proofs of Theorem \ref{thm: translations}--Corollary \ref{cor: truncations}.

\begin{proof}[Proof of Theorem \ref{thm: translations}]
We apply Lemma \ref{lemma: key} on $u$ for   $\alpha = \UUU 8\theta^{-1}c_3 \EEE +6$ \EEE to obtain a partition of $\Omega$, consisting of the sets $(P_j)_{j=1}^J$ and $R:= R_1 \cup R_2$, and to get translations $(b_j)_{j=1}^J$ such that \eqref{eq: R1R2-new}-\eqref{eq:II-new} hold. Then \eqref{eq: R1R2-new}  and the fact that $u \in GSBV^p_M(\Omega;\R^m)$ imply \eqref{eq: main estimates}(iv) and \eqref{eq: main estimates}(v). We define $t_j = b_j$ if $|b_j|>\lambda_{\theta,\alpha} \Vert \nabla u \Vert_{L^1(\Omega)}$ and $t_j = 0$ else. Note that  at most one $t_j$ is zero. Indeed, $t_{j_1} = t_{j_2} = 0$ for $j_1 \neq j_2$ would imply
$$ |b_{j_1} - b_{j_2}| \le 2\lambda_{\theta,\alpha} \Vert \nabla u \Vert_{L^1(\Omega)}.$$
 In view of  $\alpha \UUU \ge \EEE 2$,  however, this contradicts \eqref{eq:II-new}(iii).   Define $v:= \sum_{j=1}^J(u - t_j) \chi_{P_j}$. We show $\mathcal{L}^d(\lbrace u = 0 \rbrace \setminus  \lbrace v = 0 \rbrace) =0$. \UUU Since  $v = 0$ on $R$, it suffices to show $\mathcal{L}^d((\lbrace u = 0 \rbrace \setminus \lbrace v = 0 \rbrace)\cap P_j) = 0$   for each $j=1,\ldots,J$. Suppose that $\mathcal{L}^d(\lbrace u = 0 \rbrace \cap P_{j})>0$. Then $|b_{j}| \le  \lambda_{\theta,\alpha} \Vert \nabla u \Vert_{L^1(\Omega)}$ by \eqref{eq:II-new}(i), i.e., $t_{j} = 0$. This implies $v=u$ on $P_j$ and thus $\lbrace v = 0 \rbrace \cap P_j = \lbrace u = 0 \rbrace \cap P_j$. \EEE

  By \eqref{eq:II-new} and the fact that $|t_j - b_j| \le \lambda_{\theta,\alpha} \Vert \nabla u \Vert_{L^1(\Omega)}$ for $j=1,\ldots,J$ we obtain
\begin{align}\label{eq:II-new2}
(i) &  \ \ \Vert v \Vert_{L^\infty(\Omega)} \le 2\lambda_{\theta,\alpha} \Vert \nabla u \Vert_{L^1(\Omega)}\notag\\
(ii) & \ \ \min\nolimits_{1 \le j \le J} \  {\rm ess \, inf} \lbrace  |u(x) -  t_j|: \, x \in R_2 \rbrace  \ge  (\alpha-1) \lambda_{\theta,\alpha} \Vert \nabla u \Vert_{L^1(\Omega)} , \notag \\
(iii) & \ \ |t_i- t_j| \ge  (\alpha-2)\lambda_{\theta,\alpha} \Vert \nabla u \Vert_{L^1(\Omega)} \ \ \ \ \text{for} \ 1 \le i < j \le J.
\end{align}
Note that $\Vert \nabla u \Vert_{L^1(\Omega)} \le C\Vert \nabla u \Vert_{L^p(\Omega)} \le CM^{1/p}$ by H\"older's inequality for a constant $C$ depending on $\Omega$.  This along with \eqref{eq:II-new2}(i) and $\lambda_{\theta,\alpha} \le C_{\theta,\alpha}$ yields \eqref{eq: main estimates}(iii) \UUU for $C_{\theta,M}$ sufficiently large. \EEE The fact that $u \in GSBV^p(\Omega;\R^m)$, \eqref{eq: main estimates}(iii), and \eqref{eq: main estimates}(v) yield $v \in SBV^p(\Omega;\R^m) \cap L^\infty(\Omega;\R^m)$.

\UUU It remains to show \eqref{eq: main estimates}(i),(ii) and \eqref{eq: extra energy}. \EEE  Fix $E \in \mathcal{E}_\Omega$. For the bulk integral we obtain by ($f2$), \eqref{eq: main estimates}(iv), and the fact that $\nabla v = \nabla u$ on $\Omega \setminus R$
\begin{align}\label{eq: bulk}
\int_\Omega f(x,\nabla v) \, dx & =  \int_{\Omega \setminus R} f(x,\nabla v) \, dx + \int_{R} f(x,0)\, dx \le \int_{\Omega \setminus R} f(x,\nabla u) \, dx + c_2\mathcal{L}^d(R)\notag \\&\le \int_{\Omega} f(x,\nabla u) \, dx + C_M \theta. 
\end{align}
(As usual, the generic constant $C_M$ may vary from step to step.) For brevity we define  $\Gamma: =  \big(  \bigcup_{j=1}^{J} \partial^* P_j \cup \partial^* R\big) \cap \Omega$. We can split the surface integral into
\begin{align}\label{eq: surf1}
\int_{J_v} g(x,[v],\nu_v)\, d\mathcal{H}^{d-1} =  T_1 + T_2 := \int_{J_v \setminus \Gamma} g(x,[v],\nu_v)\, d\mathcal{H}^{d-1} +  \int_{J_v \cap  \Gamma} g(x,[v],\nu_v)\, d\mathcal{H}^{d-1}.
\end{align}
We start with $T_1$. Recall that the sets $(P_j)_{j=1}^{J}$ and $R$ form a Caccioppoli partition of $\Omega$. By the fact that $v = 0$ on $R$, $v = u - t_j$ on $(P_j)^1,$ and the structure theorem for Caccioppoli partitions (Theorem \ref{th: local structure}) we find
\begin{align}\label{eq: surf2}
T_1 & =  \int_{J_v \setminus \Gamma} g(x,[v],\nu_v)\, d\mathcal{H}^{d-1} = \sum\nolimits_{j=1}^{J} \int_{J_v \cap (P_j)^1}  g(x,[v],\nu_v)\, d\mathcal{H}^{d-1}\notag \\& = \sum\nolimits_{j=1}^{J} \int_{J_u \cap (P_j)^1}  g(x,[u],\nu_u)\, d\mathcal{H}^{d-1} \le  \int_{J_u \setminus \Gamma} g(x,[u],\nu_u)\, d\mathcal{H}^{d-1}.
\end{align} 
To estimate $T_2$, we split $\Gamma$ into the sets  (a) $\partial^* P_i \cap \partial^* P_j$, $1 \le i < j \le J$, (b) $\partial^* P_j \cap \partial^* R_2$, $1 \le j \le J,$ and (c) $\partial^* P_j \cap \partial^* R_1$, $1 \le j \le J$.

(a) First, \eqref{eq:II-new}(i),(iii) show that $J_u \supset \partial^* P_i \cap \partial^* P_j$ up to an $\mathcal{H}^{d-1}$-negligible set. \BBB We \EEE choose the orientation of $\nu_u(x)$ for $x \in \partial^* P_i \cap \partial^* P_j$ such that $u^+(x)$ coincides with the trace of $u\chi_{P_i}$ at $x$ and  $u^-(x)$ coincides with the trace of $u\chi_{P_j}$ at $x$. (The traces have to be understood in the sense of \cite[Theorem 3.77]{Ambrosio-Fusco-Pallara:2000}.) Moreover, we suppose that $\nu_v= \nu_u$ on $J_v \cap \partial^* P_i \cap \partial^* P_j$. Then we obtain by definition 
$$[v](x) = v^+(x) - v^-(x) =  (u^+(x) - t_i) - (u^-(x) - t_j) = [u](x) - (t_i - t_j) $$
for $\mathcal{H}^{d-1}$-a.e.\ $x \in J_v \cap \partial^* P_i \cap \partial^* P_j$. By \eqref{eq:II-new2}(i),(iii) we get
\begin{align*}
|[v](x)| &= | [u](x) -  (t_i - t_j)| \le 2\Vert v \Vert_{L^\infty(\Omega)}  \le  4\lambda_{\theta,\alpha} \Vert \nabla u \Vert_{L^1(\Omega)},\\
| [u](x)| &\ge  |t_i - t_j| - 2 \Vert v \Vert_{L^\infty(\Omega)} \ge   (\alpha-6) \lambda_{\theta,\alpha} \Vert \nabla u \Vert_{L^1(\Omega)}.
\end{align*} 
Using $\alpha = \UUU 8\theta^{-1}c_3 \EEE +6$ \UUU and again \eqref{eq:II-new2}(i), \EEE we derive for $\mathcal{H}^{d-1}$-a.e.\ $x \in \UUU J_v \cap \EEE \partial^* P_i \cap \partial^* P_j$ 
\begin{align}\label{eq: surf3}
|[v](x)|  \le |[v](x)| + \UUU 2 \theta^{-1} \EEE  \Vert v \Vert_{L^\infty(\Omega)}  \le \UUU 8 \theta^{-1} \EEE\lambda_{\theta,\alpha} \Vert \nabla u \Vert_{L^1(\Omega)} \le \frac{\UUU 8 \theta^{-1} \EEE}{\alpha-6}|[u](x)| \UUU = \EEE  \frac{1}{c_3} |[u](x)|.
\end{align}
(We include an additional addend $\UUU \frac{2}{\theta}  \EEE \Vert v \Vert_{L^\infty(\Omega)}$ since this will be convenient for the proof of \eqref{eq: extra energy}.)
 
(b) Similarly as before, \eqref{eq:II-new}(i),(ii) show that $J_u \supset \partial^* P_j \cap \partial^* R_2$ up to an $\mathcal{H}^{d-1}$-negligible set. We choose the orientation of $\nu_u(x)$ for $x \in \partial^* P_j \cap \partial^* R_2$ such that $u^+(x)$ coincides with the trace of $u\chi_{P_j}$ at $x$ and  $u^-(x)$ coincides with the trace of $u\chi_{R_2}$ at $x$. Moreover, we suppose that $\nu_v= \nu_u$ on $J_v \cap \partial^* P_j \cap \partial^* R_2$. \UUU Since $v=0$ on $R_2$, we then  obtain \EEE 
$$[v](x) = v^+(x) =  u^+(x) - t_j$$
for $\mathcal{H}^{d-1}$-a.e.\ $x \in  J_v \cap \partial^* P_j \cap \partial^* R_2$. By \eqref{eq:II-new2} we get
\begin{align*}
|[v](x)| &= | u^+(x) -  t_j| \le \Vert v \Vert_{L^\infty(\Omega)}  \le  2\lambda_{\theta,\alpha} \Vert \nabla u \Vert_{L^1(\Omega)},\\
| [u](x)| &\ge  |u^-(x) - t_j| -  |u^+(x) - t_j|\ge |u^-(x) - t_j| - \Vert v \Vert_{L^\infty(\Omega)}  \ge   (\alpha-3) \lambda_{\theta,\alpha} \Vert \nabla u \Vert_{L^1(\Omega)}.
\end{align*} 
Recalling $\alpha = \UUU 8\theta^{-1}c_3 \EEE +6$, we deduce for $\mathcal{H}^{d-1}$-a.e.\ $x \in J_v \cap \partial^* P_j \cap \partial^* R_2$ 
\begin{align}\label{eq: surf5} 
|[v](x)| \le  |[v](x)| + \UUU \theta^{-1} \EEE\Vert v \Vert_{L^\infty(\Omega)}   \le \UUU 4\theta^{-1} \EEE \lambda_{\theta,\alpha} \Vert \nabla u \Vert_{L^1(\Omega)} \le \frac{\UUU 4\theta^{-1} \EEE}{\alpha-3}|[u](x)| \le  \frac{1}{c_3} |[u](x)|.
\end{align}
 (As before, the additional addend $\UUU \theta^{-1} \EEE \Vert v \Vert_{L^\infty(\Omega)}$ will be needed for the proof of \eqref{eq: extra energy}.)

(c) Finally, for \RRR $\partial^* R_1$ we use \eqref{eq: R1R2-new}(ii) and $\mathcal{H}^{d-1}(J_u)\le M$ to find \EEE
\begin{align}\label{eq: surf6}
\mathcal{H}^{d-1}(\partial^* R_1) \le C_M \theta.
\end{align}

\UUU We are now in a position to show \eqref{eq: main estimates}(i). \EEE From \eqref{eq: surf3}-\eqref{eq: surf5} we get that $c_3 |[v](x)| \le  |[u](x)|$ for $\mathcal{H}^{d-1}$-a.e.\ $x \in (\Gamma \cap J_v) \setminus \partial^* R_1$. \BBB  Using this, $\nu_u = \nu_v$  $\mathcal{H}^{d-1}$-a.e.\  on $(\Gamma \cap J_v) \setminus \partial^* R_1$,  and \eqref{eq: surf6}, we derive  by ($g2$) and ($g3$) \EEE
\begin{align*}
T_2 & = \int_{ (J_v \cap  \Gamma) \setminus \partial^* R_1} g(x,[v],\nu_v)\, d\mathcal{H}^{d-1} +  \int_{ (J_v \cap  \Gamma) \cap \partial^* R_1} g(x,[v],\nu_v)\, d\mathcal{H}^{d-1} \\
& \le  \int_{(J_u \cap \Gamma) \setminus \partial^* R_1 } g(x,[u],\nu_u) \, d\mathcal{H}^{d-1}  +   \RRR c_5 C_M \theta \EEE \le  \int_{J_u \cap \Gamma} g(x,[u],\nu_u) \, d\mathcal{H}^{d-1} + C_M \theta.
\end{align*}
This along with   \eqref{eq: surf1}-\eqref{eq: surf2} yields
\begin{align}\label{eq: g for later}
\int_{J_v} g(x,[v],\nu_v)\, d\mathcal{H}^{d-1} \le \int_{J_u} g(x,[u],\nu_u) \, d\mathcal{H}^{d-1} + C_M \theta.
\end{align}
Now \eqref{eq: bulk} and  \eqref{eq: g for later} give \eqref{eq: main estimates}(i). Choosing specifically $g = c_4$,  \eqref{eq: g for later} also yields \eqref{eq: main estimates}(ii). Finally, the same calculation can be repeated for  $v' := \sum_{j=1}^J(u - t'_j) \chi_{P_j}$, where $(t'_j)_{j=1}^J$ satisfy $|t_j - t_j'| \le  \theta^{-1}  \Vert v\Vert_{L^\infty(\Omega)}$ for $j=1,\ldots,J$. Indeed, in this case we still have $c_3|[v'](x)| \le |[u](x)|$   for $\mathcal{H}^{d-1}$-a.e.\ $x \in (\Gamma \cap J_{v'}) \setminus \partial^* R_1$, see \eqref{eq: surf3} and \eqref{eq: surf5}.   \end{proof}

\begin{remark}\label{rem: easy}
{\normalfont
We recall from the proof that at most one translation $t_j$ is zero. Say, without restriction, $t_1 = 0$.   By \eqref{eq:II-new2}(i),(iii) we then find for all $j\ge 2$ and  almost all $x \in P_j$
 \begin{align*}
 |u(x)|= |u(x) - t_1| \ge |t_j - t_1| -  |u(x) - t_j| \ge (\alpha-4)\lambda_{\theta,\alpha} \Vert \nabla u\Vert_{L^1(\Omega)} \ge \UUU c_3\theta^{-1} \Vert \nabla u\Vert_{L^1(\Omega)}. \EEE 
 \end{align*}
where the last step follows from $\alpha \ge 4 + c_3/\theta$ and $\lambda_{\theta,\alpha} \ge 1$ (see Lemma \ref{lemma: key}).
}
\end{remark}

\begin{proof}[Proof of Corollary \ref{cor: translations-bdy}]
As  $u \in GSBV^p_M(\Omega';\R^m)$ and $\Vert \nabla h \Vert^p_{L^p(\Omega')} \le M$, we observe that $u - h \in GSBV^p_{2^pM}(\Omega';\R^m)$. We apply Theorem \ref{thm: translations} on $u-h$ and find $\bar{v} := \sum_{j=1}^J(u - h - t_j) \chi_{P_j}$ such that \UUU \eqref{eq: main estimates}(ii)-(v) hold  with $\bar{v}$ in place of $v$. \EEE We also note that \eqref{eq: g for later} is satisfied with $\bar{v}$ in place of $v$ since $J_{u-h} = J_u$ and $[u-h]=[u]$ on $J_u$. As $u-h = 0$ on $\Omega' \setminus \overline{\Omega}$ and $\lbrace \bar{v}  =0 \rbrace \supset \lbrace u - h =0 \rbrace$, we get $\bar{v} = 0$ on  $\Omega' \setminus \overline{\Omega}$. This implies that $t_j = 0$ for each $P_j$ intersecting $\Omega' \setminus \overline{\Omega}$. As at most one $t_j$ is zero, see Remark \ref{rem: easy}, at most one component $P_j$ intersects $\Omega' \setminus \overline{\Omega}$.

  We define $v = \bar{v} + h = h \chi_R + \sum_{j=1}^J(u - t_j) \chi_{P_j} \in SBV^p(\Omega';\R^m)$. Clearly, $v = h$ on $\Omega' \setminus \overline{\Omega}$ as $\bar{v} = 0$ on  $\Omega' \setminus \overline{\Omega}$. Properties \eqref{eq: small set main-boundary}(ii)-(v) follow directly from \eqref{eq: main estimates}(ii)-(v) \BBB (with $\bar{v}$ in place of $v$). \EEE To see \eqref{eq: small set main-boundary}(i), we compute by  ($f2$), \eqref{eq: small set main-boundary}(iv), and  \eqref{eq: g for later} (with $\bar{v}$ in place of $v$)
\begin{align*} 
E(v) &= \int_{\Omega'} f(x,\nabla v) \, dx + \int_{J_v} g(x,[v],\nu_v)\, d\mathcal{H}^{d-1} \\
&\le   \int_{\Omega' \setminus R} f(x,\nabla u) \, dx + \int_{R} f(x,\nabla h)\, dx + \int_{J_{\bar{v}}} g(x,[\bar{v}],\nu_{\bar{v}})\, d\mathcal{H}^{d-1}\\
&\le \int_{\Omega'} f(x,\nabla u) \, dx + c_2(\mathcal{L}^d(R) + \Vert \nabla h \Vert_{L^p(R)}^p)  + \int_{J_u} g(x,[u],\nu_u) \, d\mathcal{H}^{d-1} + C_M \theta  \\
& \le E(u)  + C_M \theta   + C_M \Vert \nabla h \Vert_{L^p(R)}^p
\end{align*}
for all $E \in \mathcal{E}_{\Omega'}$. Similarly, also \eqref{eq: extra energy2} follows as \eqref{eq: g for later} is still applicable in this case.   
\end{proof}

 \begin{proof}[Proof of Corollary \ref{cor: truncations}]
 \UUU Let $M$ large enough such that $\Vert \nabla h \Vert^p_{L^p(\Omega')} \le M$, $u \in GSBV^p_M(\Omega';\R^m)$. \EEE We apply Corollary \ref{cor: translations-bdy}  for $u$ and $\theta_k = 1/k$ for each $k \in \N$ to obtain functions $u_k:= h\chi_{R^k}  + \sum_{j=1}^{J^k}(u-t^k_j) \chi_{P^k_j}$.  \UUU They satisfy $u_k = h$ on $\Omega' \setminus \overline{\Omega}$ and $u_k \in SBV^p(\Omega';\R^m) \cap L^p(\Omega';\R^m)$ by \eqref{eq: small set main-boundary}(iii). Moreover, \EEE we have  $\limsup_{k \to \infty} E(u_k) \le E(u)$ by \eqref{eq: small set main-boundary}(i) \RRR and \EEE \eqref{eq: small set main-boundary}(iv).
 
 We need to check that $u_k \to u$ in measure on $\Omega'$. For $k$ sufficiently large such that $\mathcal{L}^d(R^k)< \mathcal{L}^d(\Omega' \setminus \overline{\Omega})$, exactly one component intersects $\Omega' \setminus \overline{\Omega}$, say without restriction $P^k_1$. As $u_k = h$ on $\Omega' \setminus \overline{\Omega}$, this implies $t^k_1 = 0$ and thus $u_k = u$  on $P^k_1$. By Remark \ref{rem: easy} (applied on $u-h$) and $\theta_k = 1/k$ we then get $ |u(x) - h(x)| \ge \BBB c_3 \EEE k \Vert \nabla u - \nabla h\Vert_{L^1(\Omega')}$ \UUU for a.e.\ $x \in \Omega' \setminus (R^k \cup P_1^k)$. As $u-h$ is finite almost everywhere, we find $\mathcal{L}^d(\Omega' \setminus (R^k \cup P_1^k)) \to 0$. (Note that, possibly slightly modifying $h$ inside $\Omega$, it is not restrictive to suppose $\Vert \nabla u - \nabla h\Vert_{L^1(\Omega')}>0$.) \EEE This along with $\mathcal{L}^d(R^k) \to 0$ by  \eqref{eq: small set main-boundary}(iv) yields $\mathcal{L}^d(\Omega' \setminus P^k_1) \to 0$. As $u_k = u$ on $ P^k_1$, we conclude $u_k \to u$ in measure on $\Omega'$. 
 \end{proof}

%

\subsection{Piecewise Poincar\'e inequality}\label{sec: piecewise Poin}

This section is devoted to the proof of Lemma \ref{lemma: key}. The reader may wish to skip this section on first reading and to proceed directly with the proof of Theorem \ref{th: comp} in Section \ref{sec: main proof}. As a preparation, we state the following elementary property.

\begin{lemma}[Covering with balls]\label{lemma: balls} 
Let $N \in \N$, $\gamma \ge 2$, and $R_0>0$. Then  each set of points $\lbrace x_1, \ldots, x_n \rbrace \subset \R^m$, $n \le N$, can be covered by finitely many pairwise disjoint balls $\lbrace B_{r_k}(y_k) \rbrace_{k=1}^M$, $M \le N$, $(y_k)_{k=1}^M \subset \R^m$, satisfying
\begin{align}\label{eq: balls}
r_k \in [R_0,(2\gamma)^N R_0] \ \ \    \text{for} \  k=1,\ldots,M, \ \ \  |y_i-y_j| \UUU > \EEE \gamma  \max_{k=1,\ldots,M} r_k \ \ \  \text{for} \  1 \le i < j \le M.
\end{align}
\end{lemma}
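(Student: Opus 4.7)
The plan is to prove the lemma by a greedy merging procedure: start with the trivial balls $B_{R_0}(x_i)$ around each individual point, and iteratively combine two balls that are too close into a single, larger ball until the separation condition in \eqref{eq: balls} is satisfied.

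More precisely, I will maintain a finite collection $\mathcal{C} = \{B_{r_k}(y_k)\}_{k=1}^{M'}$ whose union covers $\{x_1,\ldots,x_n\}$ and whose radii lie in $[R_0,(2\gamma)^N R_0]$. Initially $\mathcal{C} = \{B_{R_0}(x_i)\}_{i=1}^n$, so $M' = n \le N$. At each step let $r_* = \max_k r_k$; if $|y_i - y_j| > \gamma r_*$ holds for all $i \neq j$, the procedure stops. Otherwise, pick any pair $(i,j)$, $i \neq j$, with $|y_i - y_j| \le \gamma r_*$ and replace $B_{r_i}(y_i)$ and $B_{r_j}(y_j)$ in $\mathcal{C}$ by the single ball $B_{(1+\gamma) r_*}(y_i)$. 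The triangle inequality gives $B_{r_j}(y_j)\subset B_{r_j + |y_i-y_j|}(y_i) \subset B_{(1+\gamma) r_*}(y_i)$, so coverage is preserved, and the cardinality of $\mathcal{C}$ drops by one.

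Since we start with at most $N$ balls and this count strictly decreases, the procedure terminates after at most $N - 1$ steps, giving $M \le N$. The maximum radius is multiplied by at most $1+\gamma \le 2\gamma$ at each step, so all final radii lie in $[R_0, (1+\gamma)^{N} R_0] \subset [R_0, (2\gamma)^N R_0]$. Upon termination the first condition in \eqref{eq: balls} is exactly the stopping criterion, and the pairwise disjointness required by the statement is automatic because $\gamma \ge 2$ forces $|y_i - y_j| > \gamma r_* \ge 2 r_* \ge r_i + r_j$ for any two balls in $\mathcal{C}$.

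There is no serious obstacle here; the only point requiring minor care is to ensure that the radius bound $(2\gamma)^N R_0$ indeed dominates the worst-case multiplicative growth $(1+\gamma)^N$ produced by a chain of $N$ successive merges, which holds for every $\gamma \ge 1$.
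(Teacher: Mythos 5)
Your proposal is correct and follows essentially the same iterative merging scheme as the paper's proof: start from balls of radius $R_0$ around the points and repeatedly absorb a violating pair into one larger ball, terminating because the cardinality strictly decreases. The only (immaterial) difference is that you take the merged radius to be $(1+\gamma)r_*$ where the paper uses $2\gamma r_*$; both satisfy the required bound $(2\gamma)^N R_0$.
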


\begin{proof}
We prove the lemma by induction. Suppose that in step $l \in \N_0$ there exist  finitely many balls $\lbrace B_{r^l_k}(y^l_k) \rbrace_{k=1}^{M_l}$, $M_l \le N -l$, which cover $\lbrace x_1, \ldots, x_n \rbrace$ and satisfy $r^l_k \in [R_0,(2\gamma)^{l}R_0]$. For step $l=0$, we can take the balls centered at $\lbrace x_1, \ldots, x_n \rbrace$ with radius $R_0$.

If in some iteration step $l\le N-1$ we have  
\begin{align}\label{eq: balls2}
|y^l_i-y^l_j| \UUU > \EEE \gamma  \max_{k=1,\ldots,M_l} r^l_k \ \ \  \text{for} \ \ 1 \le i < j \le M_l,
\end{align}
 we have found a collection of balls covering $\lbrace x_1, \ldots, x_n \rbrace$ and satisfying \eqref{eq: balls}. We also observe that \eqref{eq: balls} and $\gamma \ge 2$ induce that the balls are pairwise disjoint. Otherwise, it is not restrictive to suppose that $|y^l_1-y^l_2| \UUU \le \EEE \gamma  \max_{k=1,\ldots,M_l} r^l_k$. Letting $r = 2 \gamma \, \max_{k=1,\ldots,M_l} r^l_k \le (2\gamma)^{l+1}R_0$, we observe that $B_r(y^l_1) \supset B_{r_1^l}(y_1^l) \cup B_{r_2^l}(y_2^l)$. We let $B_r(y^l_1)$ and $B_{r_k^l}(y_k^l)$, $3 \le k \le M_l$, be the collection of balls in iteration step $l+1$, \MMM whose number is $M_l - 1$ and thus at most $N-(l+1)$. \EEE

Now we observe that after at most $N-1$ iteration steps we have found a collection of balls such that \eqref{eq: balls2} holds. Indeed, in step $N-1$, the collection consists only of one ball. 
\end{proof}

We now proceed with the proof of Lemma \ref{lemma: key}.

\begin{proof}[Proof of Lemma \ref{lemma: key}]  Let $u \in GSBV^p(\Omega;\R^m)$,  $\alpha \ge 1$, and $0 < \theta < 1$ be given. Let $C_0 \ge 1$ be the constant from Theorem \ref{th: main poinc}.  Define \BBB $\beta = 6 \alpha (4\alpha)^{\theta^{-d}} $ for brevity. \EEE We first use Theorem \ref{th: main poinc} to define an auxiliary partition and corresponding translations such that \BBB estimates of type \EEE \eqref{eq:II-new}(i),(ii) are already satisfied (Step 1). Subsequently, \BBB we apply Lemma \ref{lemma: balls}  to \EEE pass to a coarser partition and we define the translations suitably to ensure also  \eqref{eq:II-new}(iii) (Step 2).

\noindent \textit{Step 1 (Auxiliary partition).}  The goal of this step is to find two disjoint \emph{rest sets} $R_1,R_2 \subset \Omega$ satisfying  
\begin{align}\label{eq: R1R2} 
(i) & \ \ \mathcal{L}^d(R_1 \cup R_2) \le C_\Omega {\theta} \, \mathcal{H}^{d-1}(J_u \cup \partial \Omega), \notag \\
(ii) & \ \ \mathcal{H}^{d-1}(\partial^* R_1) \le C_\Omega\theta \, \mathcal{H}^{d-1}(J_u \cup \partial \Omega), \ \ \ \ \ \mathcal{H}^{d-1}(\partial^* R_2) \le C_\Omega \mathcal{H}^{d-1}(J_u \cup \partial \Omega),
\end{align}
a finite Caccioppoli partition  $\Omega = \bigcup^{J_{\rm a}}_{i=1} P^{\rm a}_i \cup R_1 \cup R_2$  for an index $J_{\rm a} \in \N$ with $J_{\rm a} \le \theta^{-d}$,  and corresponding translations $(b^{\rm a}_i)_{i=1}^{J_{\rm a}}$ such that \BBB we have with \EEE $v_{\rm a}  := \sum_{i =1}^{J_{\rm a}} (u - b^{\rm a}_i) \chi_{P^{\rm a}_i} $ 
\begin{align}\label{eq:I}
(i) & \ \ \sum\nolimits_{i= 1}^{J_{\rm a} } \mathcal{H}^{d-1}(\partial^* P^{\rm a}_i) \le C_\Omega\mathcal{H}^{d-1}(J_u \cup \partial \Omega), \notag\\
(ii) &  \ \ \Vert v_{\rm a}  \Vert_{L^\infty(\Omega)} \le 4C_{0}\beta^{K_\theta}\Vert  \nabla u \Vert_{L^1(\Omega)}, \notag\\
(iii) & \ \ \min\nolimits_{1 \le i \le J_{\rm a}} \  {\rm ess \, inf} \lbrace  |u(x) -  b_i^{\rm a}|: \ x \in R_2 \rbrace  \ge 2C_0\beta^{K_\theta+1}\Vert  \nabla u \Vert_{L^{1}(\Omega)}
\end{align}
for some $K_\theta \in \N$, $K_\theta \le \theta^{-1}$. Here, $C_\Omega>0$ is a constant only depending on $\Omega$. 

\emph{Proof of Step 1.} We apply Theorem \ref{th: main poinc} on $u$ to find an ordered Caccioppoli partition $(P'_j)_j$ of $\Omega$ and corresponding translations $(b'_j)_j \subset \R^m$ such that
\begin{align}\label{eq: kornpoinsharp4}
(i) & \ \   \sum\nolimits_{j=1}^\infty \mathcal{H}^{d-1}( \partial^* P'_j)  \le  2 \mathcal{H}^{d-1}(J_u\cup \partial \Omega) +1 \le C_\Omega\mathcal{H}^{d-1}(J_u\cup \partial \Omega), \notag \\
(ii) &\ \ \Vert u - b'_j \Vert_{L^{\infty}(P_j')} \le C_0 \Vert  \nabla u \Vert_{L^1(\Omega)} \ \ \ \text{for all } j \in \N,
\end{align}
where $C_\Omega$ depends only on $\Omega$. Let $J_{\rm a} \in \N $ be the largest index such that $\mathcal{L}^d(P'_{J_{\rm a} }) \ge \theta^d \mathcal{L}^d(\Omega)$. Then $J_{\rm a}  \le \theta^{-d}$.  (Recall that the partition is assumed to be ordered.) By the isoperimetric inequality \UUU and \eqref{eq: kornpoinsharp4}(i) \EEE we have
\begin{align}\label{eq: volume}
\sum\nolimits_{j > J_{\rm a} } \mathcal{L}^d(P'_j)& \le  (\theta^d \mathcal{L}^d(\Omega))^{1/d} \sum\nolimits_{j > J_{\rm a} } \big(\mathcal{L}^d(P'_j)\big)^{1 - 1/d} \le C_\Omega \, \theta \,  \sum\nolimits_{j \ge 1} \mathcal{H}^{d-1}(\partial^* P'_j) \notag\\ 
& \le C_\Omega \theta \,  \mathcal{H}^{d-1}(J_u\cup \partial \Omega).
\end{align}
 We now introduce a decomposition of the components $(P_j')_{j > J_{\rm a} }$ according to the difference of the translations: for  $k \in \N$ we define the sets of indices
\begin{align}\label{eq: kornpoinsharp5}
\begin{split}
&\mathcal{J}^0 = \big\{  j > J_{\rm a} :  \min\nolimits_{1 \le i \le J_{\rm a} } |b'_j -b'_i| \le 3C_0\beta \Vert  \nabla u \Vert_{L^1(\Omega)}  \big\}, \\
& \mathcal{J}^k = \big\{ j > J_{\rm a} :  3C_0\beta^{k} \Vert  \nabla u \Vert_{L^1(\Omega)} < \min\nolimits_{1 \le i \le J_{\rm a} }|b'_j -b'_i| \le 3C_0\beta^{k+1}\Vert  \nabla u \Vert_{L^1(\Omega)}  \big\}.
\end{split}
\end{align}
\BBB Let \EEE $s_k = \sum_{j \in \mathcal{J}^k} \mathcal{H}^{d-1}(\partial^* P'_j)$ for $k \in \N_0$. In view of  \eqref{eq: kornpoinsharp4}(i), we find some $K_\theta \in \N$, $K_\theta \le \theta^{-1}$, such that $s_{K_\theta} \le C_\Omega\theta \mathcal{H}^{d-1}(J_u\cup \partial \Omega)$. Define
\begin{align}\label{eq: R1/R2-def}
R_1 := \bigcup\nolimits_{j \in \mathcal{J}^{K_\theta}} P'_j, \ \ \ \ \ \ \ R_2 = \bigcup\nolimits_{k> K_\theta}\bigcup\nolimits_{j \in \mathcal{J}^{k}} P'_j.
\end{align} 
The choice of $K_\theta$, \eqref{eq: kornpoinsharp4}(i), and \eqref{eq: volume} show \eqref{eq: R1R2}. We introduce a Caccioppoli partition $(P^{\rm a}_i)_{i=1}^{J_{\rm a} }$ of $\Omega \setminus (R_1 \cup R_2)$ by combining different components of $(P'_j)_{j \ge 1}$: we decompose the indices in $\bigcup_{k=0}^{K_\theta-1} \mathcal{J}^k$ into sets $\mathcal{I}_i$ with $\bigcup_{i=1}^{J_{\rm a} } \mathcal{I}_i = \bigcup_{k=0}^{K_\theta-1} \mathcal{J}^k$ according to the following rule: an index $j\in \mathcal{J}^k$ is assigned to $\mathcal{I}_i$ when $i$ is the smallest index \UUU in $\lbrace 1, \ldots, J_{\rm a} \rbrace$ \EEE such that the minimum in \eqref{eq: kornpoinsharp5} is attained. Let $P^{\rm a}_i = P'_i \cup \bigcup_{j \in \mathcal{I}_i} P'_j$ for $1 \le i \le J_{\rm a} $ \BBB and observe that the sets form a partition of $\Omega \setminus (R_1 \cup R_2)$. \EEE 

We now show  \eqref{eq:I}. First, \eqref{eq:I}(i) holds by \eqref{eq: kornpoinsharp4}(i). We  define $b^{\rm a}_i = b'_i$ for $1 \le i \le J_{\rm a} $. Let  $v' := u -  \sum_{j \ge 1}   b_j'\chi_{P_j'}$ and $v_{\rm a}  := \sum_{i=1}^{J_{\rm a} } (u -  b_i^{\rm a}) \chi_{P_i^{\rm a}}$. We find by the definition of $\mathcal{I}_i \UUU \subset \bigcup_{k=0}^{K_\theta-1} \mathcal{J}^k\EEE$ and  \eqref{eq: kornpoinsharp5} 
$$\Vert v_{\rm a}  - v' \Vert_{L^\infty(P^{\rm a}_i)} \le 3C_0\beta^{K_\theta}\Vert  \nabla u \Vert_{L^1(\Omega)} $$
for $i=1,\ldots, J_{\rm a} $. By \eqref{eq: kornpoinsharp4}(ii) we compute for each $i=1,\ldots, J_{\rm a} $
\begin{align*} 
\Vert v_{\rm a}  \Vert_{L^\infty(P_i^{\rm a})} &\le \Vert  v_{\rm a}  - v' \Vert_{L^\infty(P_i^{\rm a})} + \Vert v' \Vert_{L^\infty(P_i^{\rm a})}   \le 3C_0\beta^{K_\theta}\Vert  \nabla u \Vert_{L^1(\Omega)}  + C_0 \Vert  \nabla u \Vert_{L^1(\Omega)} \\
&  \le 4  C_0\beta^{K_\theta} \Vert  \nabla u \Vert_{L^1(\Omega)}.
\end{align*} 
This  yields \eqref{eq:I}(ii). Finally, we show \eqref{eq:I}(iii).  Fix $1 \le i \le J_{\rm a} $. For $\mathcal{L}^d$-a.e.\ $x \in R_2$, we choose $j > J_{\rm a}$ such that $x \in P_j' \subset R_2 $  (recall \eqref{eq: R1/R2-def}). Then we  compute  by \eqref{eq: kornpoinsharp4}(ii),   \eqref{eq: kornpoinsharp5}, and the fact $j \in \bigcup_{k > K_\theta}\mathcal{J}^k$
\begin{align*}
|u(x) -  b_i^{\rm a}| &\ge |b_i^{\rm a} - b_j'| -  |u(x) - b'_j|=  |b_i' - b_j'| -  |u(x) - b'_j|  \\& \ge  3C_0\beta^{K_\theta+1}\Vert  \nabla u \Vert_{L^1(\Omega)}  - C_0\Vert  \nabla u \Vert_{L^1(\Omega)} \ge  2C_0\beta^{K_\theta+1}\Vert  \nabla u \Vert_{L^1(\Omega)}. 
\end{align*}
This concludes the proof of Step 1.

\noindent \textit{Step 2 (Passage to coarser partition).} We now pass to a coarser Caccioppoli partition: there exists a partition $\Omega = \bigcup^{J}_{j=1} P_j  \cup  R_1 \cup R_2$ with $J \le J_{\rm a}  \le \theta^{-d}$ and $\bigcup_{j=1}^{J} \partial^* P_j \subset \bigcup_{i=1}^{J_{\rm a} } \partial^* P^{\rm a}_i$ up to an $\mathcal{H}^{d-1}$-negligible set, as well as corresponding translations $(b_j)_{j=1}^{J}$ such that  for some $\lambda_{\theta,\alpha}>0$
\begin{align}\label{eq:II}
(i) & \ \ \sum\nolimits_{j= 1}^J \mathcal{H}^{d-1}(\partial^* P_j) \le C_\Omega\mathcal{H}^{d-1}(J_u\cup \partial \Omega), \notag\\
(ii) &  \ \ \Vert u - b_j \Vert_{L^\infty(P_j)} \le \lambda_{\theta,\alpha} \Vert \nabla u \Vert_{L^1(\Omega)} \ \ \ \ \text{for} \ 1 \le j \le J, \notag\\
(iii) & \ \ \min\nolimits_{1 \le j \le J} \  {\rm ess \, inf} \lbrace  |u(x) -  b_j|: \ x \in R_2 \rbrace  \ge  \alpha \lambda_{\theta,\alpha}\Vert  \nabla u \Vert_{L^1(\Omega)} , \notag \\
(iv) & \ \ |b_i- b_j| \UUU > \EEE \alpha \lambda_{\theta,\alpha}\Vert  \nabla u \Vert_{L^1(\Omega)} \ \ \ \ \text{for} \ 1 \le i < j \le J.
\end{align}

\emph{Proof of Step 2.} We apply Lemma \ref{lemma: balls} on the points $(b_i^{\rm a})_{i=1}^{J_{\rm a} }$ for $R_0 = 4C_{0}\beta^{K_\theta} \Vert \nabla u \Vert_{L^1(\Omega)}$ and  $\gamma = 2\alpha$. We obtain finitely many pairwise disjoint balls $\lbrace B_{r_j}(y_j) \rbrace_{j=1}^{J}$, $J \le J_{\rm a}  \le \theta^{-d}$, which cover   $(b_i^{\rm a})_{i=1}^{J_{\rm a} }$ and satisfy
\begin{align}\label{eq: rk}
r_j \in [4C_{0}\beta^{K_\theta} \Vert \nabla u \Vert_{L^1(\Omega)}, 4C_{0}\beta^{K_\theta} \Vert \nabla u \Vert_{L^1(\Omega)} (4\alpha)^{J_{\rm a} }] \text{ for $j=1,\ldots,J$}
\end{align}
as well as
\begin{align}\label{eq: mainpointstep2}
|y_i-y_k| \UUU > \EEE 2\alpha  \max_{j=1,\ldots,J} r_j \ \ \  \text{for} \ \ 1 \le i < k \le J.
\end{align}
We set
\begin{align}\label{eq: lambdadef}
\lambda_{\theta, \alpha} = 2\Vert  \nabla u \Vert_{L^1(\Omega)}^{-1}\max_{j=1,\ldots,J} r_j
\end{align}
and note that  $   8C_{0}\beta^{K_\theta} \le \lambda_{\theta, \alpha}  \le 8\delta C_{0}\beta^{K_\theta}$ by \eqref{eq: rk}, where for brevity we set $\delta := (4\alpha)^{\theta^{-d}}$. As the balls are pairwise disjoint, each $b_i^{\rm a}$ is contained in exactly one ball. We define $b_j = y_j$ for $j=1,\ldots,J$ and introduce the sets
\begin{align}\label{eq: Lk}
\mathcal{L}_j = \lbrace i: \ b_i^{\rm a} \in B_{r_j}(b_j) \rbrace, \ \ \ \ \ \ P_j = \bigcup\nolimits_{i \in \mathcal{L}_j} P^{\rm a}_i.
\end{align}
Then the components $(P_j)_{j=1}^{J}$ form a Caccioppoli partition of $\Omega \setminus (R_1 \cup R_2)$ which is coarser than $(P^{\rm a}_i)_i$. Note that \eqref{eq:II}(i) holds by \eqref{eq:I}(i).  

We now show \eqref{eq:II}(ii)-(iv). First, \eqref{eq: mainpointstep2} and the definition of $\lambda_{\theta,\alpha}$  show \eqref{eq:II}(iv). Fix $P_j$ and $P^{\rm a}_i$ with $P^{\rm a}_i \subset P_j$. Then by \eqref{eq:I}(ii), \eqref{eq: lambdadef}-\eqref{eq: Lk},  and the fact that $4C_{0}\beta^{K_\theta} \le \frac{1}{2}\lambda_{\theta, \alpha}$
\begin{align*}
\Vert u - b_j \Vert_{L^\infty(P_j \cap P^{\rm a}_i)} \le  \Vert v_{\rm a}  \Vert_{L^\infty(P_j \cap P^{\rm a}_i)} + |b^{\rm a}_i - b_j| \le 
4C_{0}\beta^{K_\theta}\Vert  \nabla u \Vert_{L^1(\Omega)} + r_j   \le \lambda_{\theta,\alpha} \Vert \nabla u \Vert_{L^1(\Omega)}.
\end{align*}
As $P^{\rm a}_i \subset P_j$ was arbitrary,  we get \eqref{eq:II}(ii). \BBB Recalling the definition of $\beta$ and $\delta$ we have  $\beta = 6 \alpha (4\alpha)^{\theta^{-d}}  = 6 \alpha \delta$. \EEE This along with \eqref{eq:I}(iii), \eqref{eq: lambdadef}-\eqref{eq: Lk}, and  $\lambda_{\theta, \alpha}  \le 8\delta C_{0}\beta^{K_\theta}$ yields 
\begin{align*}
\min_{1 \le j \le J}   {\rm ess \, inf} \lbrace  |u(x) -  b_j|:  x \in R_2 \rbrace  &\ge  \min_{1 \le i \le J_{\rm a} }  {\rm ess \, inf} \lbrace  |u(x) -  b_i^{\rm a}|: \, x \in R_2 \rbrace - \max_{1 \le j \le J} \max_{i \in \mathcal{L}_j} |b_j - b_i^{\rm a}| \\&
 \ge  2C_0\beta^{K_\theta+1}\Vert  \nabla u \Vert_{L^1(\Omega)}  - \max_{1 \le j \le J} r_j \\& \ge \big( 2C_0\beta^{K_\theta+1}  -  \lambda_{\theta,\alpha}/2 \big) \Vert  \nabla u \Vert_{L^1(\Omega)} \\
 &  \ge  \delta C_{0}\beta^{K_\theta}(12\alpha - 4) \Vert  \nabla u \Vert_{L^1(\Omega)}\ge 8\alpha\delta C_0\beta^{K_\theta}\Vert  \nabla u \Vert_{L^1(\Omega)}\\
 & \ge \alpha \lambda_{\theta,\alpha} \Vert  \nabla u \Vert_{L^1(\Omega)}. 
\end{align*}
 This shows \eqref{eq:II}(iii) and concludes   Step 2. \BBB Recall that we have \EEE $\lambda_{\theta,\alpha}  \ge 8C_0\beta^{K_\theta}\ge 1$ and $\lambda_{\theta,\alpha} \le 8\delta C_{0}\beta^{K_\theta}$. Thus, $\lambda_{\theta,\alpha} \le C_{\theta,\alpha}: = 8(4\alpha)^{\theta^{-d}}C_0 (6 \alpha (4\alpha)^{\theta^{-d}} )^{1/\theta}$. 
 
 The statement of the lemma now follows from \eqref{eq: R1R2}  and \eqref{eq:II}.
\end{proof}

 \subsection{Proof of Theorem \ref{th: comp}}\label{sec: main proof}

The proof of Theorem \ref{th: comp}  essentially relies on the following result.

\begin{theorem}[Existence of function $\psi$]\label{th: comp-aux}
Let $\Omega \subset \Omega' \subset \R^d$ be bounded Lipschitz domains. Let $(E_k)_k \subset \mathcal{E}_{\Omega'}$ and let $(h_k)_k \subset W^{1,p}(\Omega';\R^m)$ converging in  $L^p(\Omega';\R^m)$ to some $h \in W^{1,p}(\Omega';\R^m)$ such that $(|\nabla h_k|^p)_k$ are equi-integrable. Consider $(u_k)_k  \subset GSBV^p(\Omega';\R^m)$ with  $u_k = h_k$ on $\Omega' \setminus \overline{\Omega}$ and  $\sup_{k \in \N} E_k(u_k)  <+\infty$.

\noindent Then we find a  subsequence (not relabeled), modifications $(y_k)_k \subset GSBV^p(\Omega';\R^m)$ with $y_k = h_k$ on $\Omega' \setminus \overline{\Omega}$, and a continuous function $\psi:\R_+ \to \R_+$ with $\lim_{t \to \infty} \psi(t) = + \infty$ such that 
\begin{align}\label{eq: compi1-aux}
(i) & \ \   E_k(y_k) \le E_k(u_k) + \tfrac{1}{k}, \ \ \  \ \ \ \ \ \  \mathcal{H}^{d-1}(J_{y_k}) \le \mathcal{H}^{d-1}(J_{u_k}) + \tfrac{1}{k}, \notag \\
(ii) &   \ \  \sup\nolimits_{k\in \N}\int_{\Omega'} \psi(|y_k|)\, dx < + \infty, \notag\\
(iii) & \ \   \mathcal{L}^d(\lbrace\nabla y_k \neq \nabla u_k \rbrace) \le \tfrac{1}{k}.
\end{align}
\end{theorem}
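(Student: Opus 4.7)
The plan is to apply Corollary \ref{cor: translations-bdy} at every scale $\theta_n = 1/n$ to produce families of bounded-in-$L^\infty$ modifications of the $u_k$, to extract a convergent subsequence for each $n$ via Ambrosio's Theorem \ref{th: GSBD comp}, and finally to select a diagonal $y_k = v_k^{n(k)}$ to which Lemma \ref{rig-lemma: concave function2} can be applied to produce the fidelity function $\psi$.

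\emph{Setup.} From the energy bound together with ($f2$) and ($g3$) I first fix $M$ so that $u_k \in GSBV^p_M(\Omega';\R^m)$ uniformly in $k$, and, using that equi-integrability of $(|\nabla h_k|^p)_k$ on a bounded domain yields a uniform $L^p$-bound, also $\Vert \nabla h_k \Vert_{L^p(\Omega')}^p \le M$ for all $k$. For each $n \ge 1$ I would apply Corollary \ref{cor: translations-bdy} with $\theta_n = 1/n$ to obtain $v_k^n = h_k \chi_{R_k^n} + \sum_j (u_k - t_{j,k}^n)\chi_{P_{j,k}^n}$, equal to $h_k$ on $\Omega' \setminus \overline{\Omega}$, satisfying
$$E_k(v_k^n) \le E_k(u_k) + C_M\bigl(\theta_n + \Vert \nabla h_k \Vert_{L^p(R_k^n)}^p\bigr),\ \ \mathcal{H}^{d-1}(J_{v_k^n}) \le \mathcal{H}^{d-1}(J_{u_k}) + C_M \theta_n,$$
$\mathcal{L}^d(R_k^n) \le C_M \theta_n$, $\nabla v_k^n = \nabla u_k$ on $\Omega' \setminus R_k^n$, and the crucial control $\Vert v_k^n - h_k \Vert_{L^\infty(\Omega')} \le C_{\theta_n, M}$ (for each \emph{fixed} $n$). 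Equi-integrability together with $\mathcal{L}^d(R_k^n) \le C_M/n \to 0$ also yields $\sup_k \Vert \nabla h_k \Vert_{L^p(R_k^n)}^p \to 0$ as $n \to \infty$.

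\emph{Compactness for fixed $n$ and diagonalization.} For each fixed $n$ the sequence $(v_k^n)_k$ is uniformly bounded in $GSBV^p(\Omega';\R^m)$ and, via the $L^\infty$-bound on $v_k^n - h_k$ together with the $L^p$-convergence of $h_k$, also bounded in $L^1(\Omega';\R^m)$. Theorem \ref{th: GSBD comp} (with $\psi(t) = t$) then yields a subsequence along which $v_k^n \to w^n$ in measure, for some $w^n \in GSBV^p$. A Cantor diagonal argument produces a single subsequence of $(u_k)_k$ (not relabeled) along which $v_k^n \to w^n$ in measure for \emph{every} $n$. I would then pick $n(k) \to \infty$ so slowly that $C_M \theta_{n(k)} \le \tfrac{1}{2k}$, $C_M \Vert \nabla h_k \Vert_{L^p(R_k^{n(k)})}^p \le \tfrac{1}{2k}$, and $\mathcal{L}^d(\lbrace |v_k^{n(k)} - w^{n(k)}| > 2^{-k} \rbrace) \le 2^{-k}$, and define $y_k := v_k^{n(k)}$. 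Properties \eqref{eq: compi1-aux}(i) and (iii) follow immediately from the bounds listed above.

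\emph{The main obstacle and its resolution.} What remains, and is the hard part, is \eqref{eq: compi1-aux}(ii): a \emph{single} $\psi$ with $\psi(t)\to\infty$ controlling $\int \psi(|y_k|)\,dx$ uniformly in $k$. The natural tool is Lemma \ref{rig-lemma: concave function2} applied to $(y_k)_k$, whose hypothesis amounts to $(y_k)_k$ being Cauchy in measure, i.e.\ admitting an a.e.\ convergent subsequence. The obstacle is that $C_{\theta,M}\to\infty$ as $\theta \to 0$, so the a priori $L^\infty$-bound on $v_k^{n(k)} - h_k$ degenerates along the diagonal and no direct compactness of $(y_k)_k$ is available. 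To overcome this I would refine the diagonal extraction: using Caccioppoli compactness of the partitions $\lbrace P^n_{j,k},R_k^n\rbrace$ (Theorem \ref{th: comp cacciop}, applicable by \eqref{eq: small set main-boundary}(v)) and the translational flexibility encoded in \eqref{eq: extra energy2}, one can arrange that the limit functions $w^n$ stabilize pointwise as $n\to\infty$, and consequently choose $n(k)$ inductively so that $\mathcal{L}^d(\lbrace |y_{k}-y_{k-1}| > 2^{-k}\rbrace) \le 2^{-k}$. This makes $(y_k)_k$ Cauchy in measure, hence a.e.\ convergent along a subsequence, so Lemma \ref{rig-lemma: concave function2} furnishes the required $\psi$ and establishes \eqref{eq: compi1-aux}(ii).
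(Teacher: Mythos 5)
Your architecture matches the paper's: piecewise translated configurations at scales $\theta_n$ via Corollary \ref{cor: translations-bdy}, Ambrosio compactness for each fixed $n$, a diagonal sequence for \eqref{eq: compi1-aux}(i),(iii), and Lemma \ref{rig-lemma: concave function2} for (ii). You also correctly identify the crux — the $L^\infty$-bound $C_{\theta,M}$ blows up along the diagonal — and you name the right ingredients for resolving it (Caccioppoli compactness of the partitions and the translational flexibility \eqref{eq: extra energy2}). But the sentence ``one can arrange that the limit functions $w^n$ stabilize pointwise as $n\to\infty$'' is precisely the content of Steps 3 and 4 of the paper's proof, and you assert it rather than prove it. As defined, the $w^n$ do \emph{not} stabilize: the translations $t^{n}_{j,k}$ and $t^{n+1}_{j,k}$ come from independent applications of the corollary and may differ arbitrarily on overlapping components. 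One must actively \emph{redefine} the translations inductively in $n$ (keeping $\hat t^{n+1}_{j,k}=\hat t^{n}_{j,k}$ whenever $\mathcal{L}^d(P^{n}_{j,k}\cap P^{n+1}_{j,k})>0$), and then verify two nontrivial things: (a) the accumulated drift $|\hat t^{n+1}_{j,k}-t^{n+1}_{j,k}|$ stays within the window $\theta_{n+1}^{-1}\Vert v^{n+1}_k-h_k\Vert_{L^\infty}$ required by \eqref{eq: extra energy2}, which the paper achieves only after arranging $\Vert v^{n}_k-h_k\Vert_{L^\infty}$ to be nondecreasing in $n$ and choosing $\theta_n$ geometrically (the drift is of order $(2n+1)\Vert v^{n+1}_k-h_k\Vert_{L^\infty}$, and $2n+1\le\theta_{n+1}^{-1}$ holds for $\theta_n=2^{-n}$ but \emph{fails} for your choice $\theta_n=1/n$); and (b) the redefined functions for consecutive $n$ actually agree outside a set of measure $O(2^{-n})$, which requires a quantitative rate in the convergence of the Caccioppoli partitions, obtained by further subsequence extractions in both indices.

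A secondary divergence: you propose to apply Lemma \ref{rig-lemma: concave function2} directly to the diagonal sequence $(y_k)_k$, whereas the paper applies it to the sequence of limits $(\hat v^l)_l$ and then transfers the bound to $y_k$ using $\Vert y_k-\hat v^l\Vert_{L^1}\le 1$ and the subadditivity of the concave $\psi$. Your variant is workable in principle, but making $(y_k)_k$ Cauchy in measure forces you to compare $v_k^{n(k)}$ with $v_{k-1}^{n(k-1)}$ across both indices, which again can only go through the stabilized limits as intermediaries. So the gap is not in the overall plan but in the missing construction: without the explicit redefinition of translations, the verification of the drift bound, and the measure estimate on the agreement sets, the claimed Cauchy property of $(y_k)_k$ — and hence \eqref{eq: compi1-aux}(ii) — is not established.
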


Indeed, once Theorem \ref{th: comp-aux} is proved, Theorem \ref{th: comp} follows directly from Theorem \ref{th: GSBD comp} and ($f2$), ($g3$), apart from the \UUU property \EEE that $\nabla u_k  \rightharpoonup \nabla u$   weakly in $L^p(\Omega'; \R^{m\times d})$. To see the latter, we note  by ($f2$) that   we find $Z \in L^p(\Omega'; \R^{m\times d})$ such that $\nabla u_k  \rightharpoonup  Z$ weakly in $L^p(\Omega'; \R^{m\times d})$ (possibly up to a further subsequence). It   suffices to check   $Z = \nabla u$. To this end, we show that $\nabla u_k  \rightharpoonup  \nabla u$ weakly in $L^1(\Omega'; \R^{m\times d})$. Indeed, we have $\nabla y_k  \rightharpoonup  \nabla u$ weakly in $L^1(\Omega'; \R^{m\times d})$ and for each $\varphi \in L^\infty(\Omega';\R^{m\times d})$ we compute by \eqref{eq: compi1-aux}(iii) and H\"older's inequality
\begin{align*}
\Big|\int_{\Omega'} (\nabla u_k - \nabla y_k):\varphi \, dx  \Big|& \le  \Vert \varphi \Vert_\infty \int_{\lbrace\nabla y_k \neq \nabla u_k \rbrace} |\nabla u_k - \nabla y_k| \, dx \\&\le \big(\mathcal{L}^d(\lbrace\nabla y_k \neq \nabla u_k \rbrace)\big)^{1-1/p} \Vert \nabla u_k - \nabla y_k   \Vert_{L^p(\Omega')} \to 0.
\end{align*}

  We now proceed with the proof of Theorem \ref{th: comp-aux}.  \MMM We point out that the result does not simply follow from Corollary \ref{cor: translations-bdy}: to construct modifications $(y_k)_k$ satisfying  \eqref{eq: compi1-aux}(i), Corollary \ref{cor: translations-bdy} has to be applied along a sequence $\theta \to 0$ to obtain  piecewise translated configurations $(v^\theta_k)_{k,\theta}$. As $\theta \to 0$, unfortunately the uniform bound \eqref{eq: small set main-boundary}(iii) blows up, and the definition of the function $\psi$ is not immediate. As a remedy, we first pass to a limit $v^\theta$ for each fixed $\theta$ as $k \to \infty$, and then we show that $(v^\theta)_\theta$ are close to each other in a certain sense on the bulk part of the domain. This allows us to apply Lemma \ref{rig-lemma: concave function2} and to obtain the function $\psi$. Then,  $(y_k)_k$ can be chosen as a suitable diagonal sequence in $(v^\theta_k)_{k,\theta}$.  In this strategy, we  follow closely \cite[Theorem 6.1]{Solombrino} and \cite[Theorem 2.2]{Friedrich:15-2}.  Note, however, that some delicate adaptions are necessary  due to the fact that the energies may depend on the crack opening. \EEE

\begin{proof}[Proof of Theorem \ref{th: comp-aux}]
\UUU Consider a sequence $(E_k)_k \subset \mathcal{E}_{\Omega'}$. Let  $(h_k)_k \subset W^{1,p}(\Omega';\R^m)$ converging in  $L^p(\Omega';\R^m)$ to some $h \in W^{1,p}(\Omega';\R^m)$  such that $(|\nabla h_k|^p)_k$ are equi-integrable. Let $(u_k)_k  \subset GSBV^p(\Omega';\R^m)$ with  $u_k = h_k$ on $\Omega' \setminus \overline{\Omega}$   and $\sup_{k \in \N} E_k(u_k) \le \BBB C_* \EEE <+\infty$. Setting
\begin{align}\label{eq: M def}
M:= \frac{C_*}{c_1} + \frac{C_*}{c_4} + \sup_{k \in \N} \Vert \nabla h_k \Vert^p_{L^p(\Omega')},
\end{align}
we find $\Vert \nabla h_k \Vert^p_{L^p(\Omega')} \le M$ and $u_k \in GSBV^p_M(\Omega';\R^m)$ by ($f2$) and ($g3$). Define the decreasing sequence $\theta_l =  2^{-l}$  for $l\in \N$.
As we will pass to subsequences (not relabeled) several times in the proof, we emphasize that we will eventually only have  the inequality \EEE
\begin{align}\label{eq: theta0}
\theta_l \le 2^{-l}.
\end{align}

\emph{Step 1 (Application of  Corollary \ref{cor: translations-bdy}).} We apply  Corollary \ref{cor: translations-bdy} for $\theta_l$ and  $M$ on the functions $u_k$ and the boundary data $h_k$.  We find (finite) Caccioppoli partitions  $\Omega' = \bigcup_{j \ge 1} P_j^{k,l} \cup R^l_k$, and piecewise translated functions $(v^l_k)_k \subset SBV^p(\Omega';\R^m)$ defined by
\begin{align}\label{eq: comp4}
v^l_k :=  h_k +  \sum\nolimits_{j \ge 1} (u_k - t^{k,l}_j - h_k) \chi_{P_j^{k,l}} = h_k\chi_{R^l_k} +  \sum\nolimits_{j \ge 1} (u_k - t^{k,l}_j) \chi_{P_j^{k,l}},
\end{align}
 where $(t_j^{k,l})_{j \ge 1} \subset \R^m$  are  suitable translations. For notational convenience, we will also use the notation $P_0^{k,l} = R^l_k$ such that $(P_j^{k,l})_{j \ge 0}$ is a partition of $\Omega'$. From Corollary \ref{cor: translations-bdy}  we have $v^l_k = h_k$ on $\Omega' \setminus \overline{\Omega}$ for all $l, k \in \N$ and from  \eqref{eq: small set main-boundary}, \eqref{eq: M def}, \eqref{eq: comp4} we get
 \begin{align}\label{eq: comp1}
(i) & \ \ \Vert v^l_k - h_k \Vert_{L^\infty(\Omega')}  \le C_{\theta_l, M}, \notag \\ 
(ii) & \  \  \Vert \nabla v^l_k \Vert^p_{L^p(\Omega')}  \le \Vert \nabla u_k \Vert^p_{L^p(\Omega')}  + \Vert \nabla h_k \Vert^p_{L^p(R^l_k)}  \le  \BBB 2M, \EEE \notag \\
(iii) & \ \ \mathcal{L}^d(R^l_k) \le C_M\theta_l, \notag \\ 
(iv) & \ \ \mathcal{H}^{d-1}(J_{v^l_k}) \le   \mathcal{H}^{d-1}(J_{u_k}) + C_M \theta_l \le M + C_M\theta_l, \notag \\
(v) & \ \  \mathcal{H}^{d-1} \Big(\bigcup\nolimits_{j \ge 0} \partial^* P_j^{k,l}  \Big)   \le C_M.
\end{align}
\BBB By the fact that  $(|\nabla h_k|^p)_k$ are equi-integrable and \eqref{eq: comp1}(iii) \EEE we find a \UUU decreasing \EEE sequence $\eta_l \to 0$ as $l \to \infty$  such that 
 $$\Vert \nabla h_k \Vert^p_{L^p(R^l_k)} \le \eta_l   \ \ \text{ for all $k,l \in \N$}. $$
From \eqref{eq: small set main-boundary}(i) we thus obtain
 \begin{align}\label{eq: comp1-new}
E_k(v^l_k) \le E_k(u_k) +  C_M(\theta_l + \eta_l).
\end{align}  
For later purposes, we remark that for each collection $(\hat{t}^{k,l}_j)_{j \ge 1}$ with $|\hat{t}^{k,l}_j - t^{k,l}_j| \le \UUU \theta_l^{-1} \EEE \Vert v^l_k-h_k\Vert_\infty$ for all $j$, the functions $\hat{v}^l_k =  h_k \chi_{R^l_k} +  \sum\nolimits_{j \ge 1} (u_k - \hat{t}^{k,l}_j) \chi_{P_j^{k,l}}$ also satisfy 
\begin{align}\label{eq: comp1-new-strange}
E_k(\hat{v}^l_k) \le E_k(u_k) +  C_M(\theta_l + \eta_l), \ \ \ \ \ \ \ \ \mathcal{H}^{d-1}(J_{\hat{v}^l_k}) \le   \mathcal{H}^{d-1}(J_{u_k}) + C_M \theta_l,
\end{align}
see \eqref{eq: extra energy2}. \UUU We also observe that it is not restrictive to assume that
\begin{align}\label{eq: max3}
\Vert v^{l+1}_k - h_k \Vert_{L^\infty(\Omega')} \ge  \Vert v^{l}_k - h_k \Vert_{L^\infty(\Omega')}  \ \ \ \text{for all } \  l,k \in \N.   
\end{align}
In fact, otherwise we may replace the function $v^l_k$ defined  in \eqref{eq: comp4} for index $l$ by the function $v^{l+1}_k$. Then \eqref{eq: comp1}-\eqref{eq: comp1-new-strange} still hold as the sequences $\eta_l$  and $\theta_l$ are  decreasing and \eqref{eq: max3} is trivially satisfied. \EEE

%
%
%
%

\emph{Step 2 (Limiting objects for each $l$).} \BBB In view of \eqref{eq: comp1}(i),(ii),(iv) and the fact that $h_k$ converges to $h$ in $L^p(\Omega';\R^m)$, Ambrosio's compactness result (Theorem \ref{th: GSBD comp}) is applicable for fixed $l \in \N$. Thus, \EEE using a diagonal argument we get a  subsequence of $(k)_{k \in \N}$ (not relabeled) such that for every $l \in \N$ we find a function $v^l \in GSBV^p(\Omega';\R^m)$  with $v^l_k \to v^l$ pointwise a.e.\ in $\Omega'$ for $k\to \infty$. \RRR Clearly, by \eqref{eq: comp1}(i) we then also have
\begin{align}\label{eq: comp2}
v^l_k \to v^l \text{  in $L^1(\Omega';\R^m)$}.
\end{align}
Likewise, we can establish a compactness result for the Caccioppoli partitions \RRR as follows: in view of \eqref{eq: comp1}(iii), for a suitable subsequence of $(l)_{l \in \N}$ (not relabeled) we may suppose that 
\begin{align}\label{eq: comp1-XXXX-new}
\mathcal{L}^d(R^l_k) <\mathcal{L}^d(\Omega' \setminus \overline{\Omega})
\end{align}
 for all $k,l\in\N$.  Recall from Corollary \ref{cor: translations-bdy} that for each partition  at most one component $(P^{k,l}_j)_{j \ge 1}$ intersects $\Omega' \setminus \overline{\Omega}$. (We emphasize that the rest set $R^l_k$ is \emph{not}   counted among the components   here.)  In view of \eqref{eq: comp1-XXXX-new}, exactly one of these components intersects $\Omega' \setminus \overline{\Omega}$. We may reorder the components of the partitions such that $P_0^{k,l} = R^l_k$, such that
 \begin{align}\label{eq: boundary component}
\text{exactly $P^{k,l}_{1}$ intersects $\Omega' \setminus \overline{\Omega}$,}
\end{align}
 and $(P_j^{k,l})_{j \ge 2}$ are ordered for all $k, l \in \N$.  By  \eqref{eq: comp1}(v), Theorem \ref{th: comp cacciop}, and the comment thereafter we find for each $l \in \N$ a  partition $(P_j^l)_{j \ge 0}$ with  $\sum_{j\ge 0} \mathcal{H}^{d-1}(\partial^* P^{l}_j) \le C_M$ such that for a suitable subsequence of $(k)_{k \in \N}$ one has  $\sum_{j \ge 0} \mathcal L^d(P_j^{k,l} \triangle P_j^l)\to 0$ for $k \to \infty$. (Here, we again use a diagonal argument.) \EEE

Since $\sum_{j \ge 0} \mathcal{H}^{d-1}(\partial^* P^{l}_j) \le  C_M$ for all $l \in \N$, we can repeat the arguments and get a partition $(P_j)_{j \ge 0}$ such that $\sum_{j \ge 0} \mathcal L^d\left(P_j^{l} \triangle P_j\right)\to 0$ for $l \to \infty$ after extracting a further suitable subsequence. Thus, using a diagonal argument, we can choose a (not relabeled)  subsequence  of $(l)_{l \in \N}$ and afterwards of $(k)_{k \in \N}$ such that
\begin{align}\label{eq: comp1-XXXX}
\sum\nolimits_{j \ge 0} \mathcal L^d\big(P_j^{l} \triangle P_j\big)\le 2^{-l}, \ \ \ \ \ \ \sum\nolimits_{j \ge 0} \mathcal L^d\big(P_j^{k,l} \triangle P_j^l\big)\le 2^{-l} \ \ \text{ for all } k \ge l.
\end{align}

 \RRR Our goal is to obtain the desired function $\psi$ by using Lemma  \ref{rig-lemma: concave function2} for the limiting sequence $(v^l)_l$. We will now  show that, by redefining the translations on the  components of the partitions appropriately (cf. \eqref{eq: comp4}), we can indeed construct this sequence (which we will denote by $(\hat{v}^l)_l$ for better distinction) in such a way that \EEE  
\begin{align}\label{eq: comp5}
\mathcal L^d\left(\bigcap\nolimits_{n \in \N}\bigcup\nolimits_{ m \ge n}\lbrace |\hat{v}^n - \hat{v}^{m}| > 1\rbrace\right)=0.
\end{align}
Then Lemma  \ref{rig-lemma: concave function2} is applicable.

\emph{Step 3 (Redefinition of translations).}  We now come to the details how to choose the translations. Fix $k \in \N$. We describe an iterative procedure to redefine $t^{k,l}_j$ for all $l,j \in \N$. Let $\hat{v}^1_k = v^1_k$ as defined in \eqref{eq: comp4}. Assume that  $(\hat{t}^{k,l}_j)_j$ (which may differ from $(t^{k,l}_j)_j$)  and the corresponding $\hat{v}^l_k$ (see \eqref{eq: comp4}) have been chosen  such that 
\begin{align}\label{eq: comp1-NNNN}
\Vert \hat{v}^l_k - h_k \Vert_{L^\infty(\Omega')}  \le 2\sum\nolimits_{\ell =1}^{l} \Vert v^{\ell}_k - h_k \Vert_{L^\infty(\Omega')}
\end{align}
and $\hat{v}_k^l = h_k$ on $\Omega' \setminus \overline{\Omega}$. Clearly, these assumptions hold for $l=1$. 

Consider some $P^{k,l+1}_{j}$.  If $\mathcal{L}^d(P^{k,l}_{j} \cap P^{k,l+1}_j) > 0$, we define $\hat{t}^{k,l+1}_{j} = \hat{t}^{k,l}_{j}$. Otherwise, we  set $\hat{t}^{k,l+1}_{j} = t^{k,l+1}_{j}$. In the first case, noting that $v^{l+1}_k = u_k  -  t^{k,l+1}_{j}$ and $\hat{v}^l_k = u_k - \hat{t}^{k,l+1}_{j} $ on $P^{k,l}_{j} \cap P^{k,l+1}_j$ by  \eqref{eq: comp4},   we obtain by the triangle inequality and \eqref{eq: comp1-NNNN}
\begin{align}\label{eq: longt}
|\hat{t}^{k,l+1}_{j} -  t^{k,l+1}_{j}| &  = \Vert  u_k  -  t^{k,l+1}_{j} - (u_k - \hat{t}^{k,l+1}_{j} )  \Vert_{L^\infty(P^{k,l}_{j} \cap P^{k,l+1}_j)} = \Vert  v^{l+1}_k - \hat{v}^{l}_k  \Vert_{L^\infty(P^{k,l}_{j} \cap P^{k,l+1}_j)}\notag  \\
& \le  \Vert v^{l+1}_k - h_k \Vert_{L^\infty(\Omega')} + \Vert \hat{v}^l_k - h_k \Vert_{L^\infty(\Omega')}\notag \\
&  \le 2\sum\nolimits_{\ell =1}^{l} \Vert v^{\ell}_k - h_k \Vert_{L^\infty(\Omega')} + \Vert v^{l+1}_k - h_k \Vert_{L^\infty(\Omega')}.
\end{align}
We define $\hat{v}^{l+1}_k$ as in \eqref{eq: comp4} replacing $t_j^{k,l+1}$ by $\hat{t}_j^{k,l+1}$ and derive by the previous calculation 
$$\Vert \hat{v}_k^{l+1} -h_k\Vert_{L^\infty(\Omega')} \le  2\sum\nolimits_{\ell =1}^{l+1} \Vert v^{\ell}_k - h_k \Vert_{L^\infty(\Omega')},$$
i.e., \eqref{eq: comp1-NNNN} holds for $l+1$. We also have $\hat{v}^{l+1}_k = h_k$ on $\Omega' \setminus \overline{\Omega}$. In fact, by \eqref{eq: boundary component} only $P^{k,l+1}_{1}$ intersects $\Omega' \setminus \overline{\Omega}$. But then \eqref{eq: comp4} and $\BBB v_k^{l+1} = \EEE v_k^l = \hat{v}_k^l =u_k =h_k$ on $\Omega' \setminus \overline{\Omega}$ imply $t^{k,l+1}_{1} = \hat{t}^{k,l+1}_{1} = t^{k,l}_{1} = \UUU \hat{t}^{k,l}_{1} =0\EEE$ and thus $\hat{v}^{l+1}_k = h_k$ on $\Omega' \setminus \overline{\Omega}$.

By \EEE \eqref{eq: longt} and \eqref{eq: max3} we observe 
\begin{align*}
|\hat{t}^{k,l+1}_{j} -  t^{k,l+1}_{j}| &\le \UUU (2l +1 ) \EEE \Vert v^{l+1}_k - h_k \Vert_{L^\infty(\Omega')} \le \theta_{l+1}^{-1}  \Vert v^{l+1}_k - h_k \Vert_{L^\infty(\Omega')},
\end{align*}
\UUU where the last step follows from \eqref{eq: theta0}. \EEE Thus, in view of the remark before \eqref{eq: comp1-new-strange}, also the newly constructed functions $\hat{v}^{l+1}_k$ satisfy the energy bound \eqref{eq: comp1-new-strange}. 

 \BBB By \eqref{eq: comp1}(i) and \eqref{eq: comp1-NNNN} we also have $\Vert \hat{v}^l_k - h_k \Vert_{L^\infty(\Omega')}  \le 2\sum\nolimits_{\ell =1}^{l} C_{\theta_\ell, M}$. Thus, repeating the argument in  \eqref{eq: comp2}, we find some $\hat{v}^l \in GSBV^p(\Omega';\R^m)$  such that 
\begin{align}\label{eq: L5}
\hat{v}^l_k \to \hat{v}^l \text{  in $L^1(\Omega';\R^m)$}.
\end{align}

\emph{Step 4 (Proof of \eqref{eq: comp5}).} \BBB Having redefined the piecewise translated functions, we are  now in a position to \EEE show that \eqref{eq: comp5} holds. To this end, we set $A^n_{k,l} = \bigcap_{n \le m \le l} \lbrace \hat{v}^m_k =  \hat{v}^n_k \rbrace$ for all $n \in \N$ and  \BBB $k \ge l \ge n$. \EEE If we show
\begin{align}\label{eq: comp8}
\mathcal L^d\left(\Omega' \setminus A^n_{k,l}\right) \le c2^{-n}
\end{align}
for  $c=c(C_M)$,  then \eqref{eq: comp5} follows. In fact, for each $l\ge n$ we can choose $K=K(l)\ge l$ so large that $\mathcal L^d\left(\lbrace |\hat{v}^m_K - \hat{v}^m| > \frac{1}{2} \rbrace\right) \le 2^{-m}$ for all $n \le m \le l$ as $\hat{v}^m_k \to \hat{v}^m$ in measure for $k \to \infty$ \BBB (see \eqref{eq: L5}). \EEE This implies 
\begin{align*}
&\mathcal L^d\left(\bigcup\nolimits_{n \le m \le l} \lbrace |\hat{v}^m -\hat{v}^n|> 1 \rbrace\right)\le \mathcal L^d\left(\Omega' \setminus A^n_{K,l}\right) + \sum\nolimits_{n \le m \le l} \mathcal L^d\left(\lbrace |\hat{v}^m_K - \hat{v}^m| >  \tfrac{1}{2} \rbrace\right) \le c2^{-n}.
\end{align*}
(Here, the constant $c$ may vary from step to step.) \RRR Passing to the limit $l \to \infty$ we find $\mathcal L^d(\bigcup\nolimits_{n \le m} \lbrace |\hat{v}^m -\hat{v}^n|> 1 \rbrace) \le c2^{-n}$ and taking the intersection over all $n \in \N$ we obtain \eqref{eq: comp5}, \EEE as desired.

We now show \eqref{eq: comp8}.  First, observe that by \eqref{eq: comp4}, \eqref{eq: comp1}(iii), and \BBB $\theta_m \le 2^{-m}$ (see \eqref{eq: theta0}) \EEE
\begin{align}\label{lll}
\mathcal{L}^2\Big( \bigcap\nolimits_{n \le m \le l} \lbrace T^n_k = T^m_k \rbrace \setminus A^n_{k,l}  \Big) \le \sum_{n \le m \le l} \mathcal{L}^d(P^{k,m}_0) = \sum_{n \le m \le l} \mathcal{L}^d(R^m_k)  \le C_M2^{-n},
\end{align}
where  $T^m_k := \sum_{j \ge 1} \hat{t}_j^{k,m}\chi_{P_j^{k,m}}$ and \BBB $P_0^{k,m} = R^m_k$. \EEE     Due to the above construction of the translations in Step 3, we get $\lbrace T^m_k = T^{m+1}_k \rbrace \supset \bigcup_{j \ge 1} (P^{k,m+1}_j \cap P^{k,m}_j)$ for $n \le m \le l-1$.  From \eqref{eq: comp1-XXXX} we deduce $\sum_{  j \ge 0  } \mathcal L^d(P_j^{k,m+1} \triangle P_j^{k,m}) \le 3 \cdot 2^{-m}$.  This along with \BBB \eqref{eq: theta0} and \EEE \eqref{eq: comp1}(iii) yields     
\begin{align*}
\mathcal L^d\left(\Omega' \setminus \lbrace T^m_k = T^{m+1}_k \rbrace\right) & \le \mathcal{L}^d(R^{m+1}_k) +  \sum\nolimits _{j \ge 1} \mathcal L^d\big(P_j^{k, m+1} \setminus P_j^{k, m}\big)   \le C_M 2^{-(m+1)} + 3 \cdot 2^{-m}.
\end{align*}
We now sum over $n \le m \le l-1$ and, in view of \eqref{lll}, we obtain \eqref{eq: comp8}. Thus, as already shown above,  also \eqref{eq: comp5} holds.

\emph{Step 5 (Conclusion).}  \BBB We observe that $(\hat{v}^l)_l \subset L^1(\Omega';\R^m)$ by \eqref{eq: L5}.  In view of \eqref{eq: comp5},   we can apply Lemma \ref{rig-lemma: concave function2} to obtain a nonnegative, increasing, concave function $\tilde{\psi}$ with 
$\lim_{t\to+\infty}\tilde{\psi}(t)=+\infty$ such that (up to a subsequence)
\begin{align}\label{eq: sup control}
\sup\nolimits_{l \ge 1} \int_{\Omega'} \tilde{\psi}(|\hat{v}^l|)\, dx < + \infty.
\end{align} 
Define $\psi(t) =  \min \lbrace \tilde{\psi}(t) ,t \rbrace$ and observe that $\psi$ has the properties stated in Theorem \ref{th: comp-aux}. \EEE  We are now in a position to define the modifications $(y_k)_k$ with the desired properties. Recalling $\hat{v}_k^l \to \hat{v}^l$ in $L^1(\Omega';\R^m)$ (see   \eqref{eq: L5}) and \eqref{eq: comp1-new-strange}, we can select a subsequence of  $(u_k)_k$ and a diagonal sequence $(y_k)_k \subset (\hat{v}^l_k)_{k,l}$ such that $\Vert y_k - \hat{v}^l \Vert_{L^1(\Omega')} \le 1$ for some $\hat{v}^l$, 
$$E_k(y_k) \le E_k(u_k) + \tfrac{1}{k}, \ \ \  \ \ \ \ \ \  \mathcal{H}^{d-1}(J_{y_k}) \le \mathcal{H}^{d-1}(J_{u_k}) + \tfrac{1}{k},$$ 
and $y_k = h_k$ on $\Omega' \setminus \overline{\Omega}$. This yields \eqref{eq: compi1-aux}. In fact, (i) follows from the previous equation and (iii) follows from  \eqref{eq: comp4} and \eqref{eq: comp1}(iii). Finally, to see (ii), we observe that $\psi$ is subadditive as concave function with $\psi(0) \ge 0$. Then  $\Vert y_k - \hat{v}^l \Vert_{L^1(\Omega')} \le 1$ implies
$$\sup\nolimits_{k \in \N} \int_{\Omega'} {\psi}(|y_k|)\, dx  \le \sup\nolimits_{l \ge 1} \Big( \int_{\Omega'} \tilde{\psi}(|\hat{v}^l|)\, dx + \Vert y_k - \hat{v}^l \Vert_{L^1(\Omega')} \Big) \le \sup\nolimits_{l \ge 1} \int_{\Omega'} \tilde{\psi}(|\hat{v}^l|)\, dx + 1. $$
By \eqref{eq: sup control}  this concludes the proof.
\end{proof}

\begin{remark}\label{rem: leichter}
{\normalfont

\RRR We close this section with the observation that Theorem \ref{th: comp-aux} is  much easier to prove if $(g3)$ is replaced by a condition of the form 
\begin{align}\label{eq: newcond}
c_4 \big(1+\varphi(|\zeta|) \big) \le  g(x,\zeta,\nu) \ \ \ \ \text{ for every $x \in \Omega'$, $\zeta \in \R^m_0$, and  $\nu \in \mathbb{S}^{d-1}$},
\end{align}
where $\varphi: \R_+ \to \R_+$ is an increasing function satisfying $\varphi(t) \le t$ for all $t \in \R_+$  and $\lim_{t \to \infty} \varphi(t) = +\infty$. Indeed, in this case no modifications have to be introduced, but \eqref{eq: compi1-aux}(ii) can be shown  for the original sequence $(u_k)_k$. The strategy is to apply the (standard) Poincar\'e inequality in $BV$ on a suitable composition of $u_k$ with some $\psi$, which allows to  control uniformly the $L^1$-norm of  the compositions and leads to  \eqref{eq: compi1-aux}(ii). \EEE

Let us come to the details.  Consider a sequence $(E_k)_k \subset \mathcal{E}_{\Omega'}$  with densities $f_k$ and $g_k$
and $(u_k)_k  \subset GSBV^p(\Omega';\R^m)$ with   $\sup_{k \in \N} E_k(u_k) \le \BBB C_* \EEE <+\infty$. As ${\varphi}$ is increasing and satisfies $\lim_{t \to \infty} {\varphi}(t) = + \infty$,  we can find a smooth,  increasing, concave function $\psi: \R_+ \to \R_+$ with \RRR $\psi \le {\varphi}+2$ \EEE and $\lim_{t \to \infty} \psi(t) = + \infty$.
(An elementary construction of such a function may be found in \cite[Lemma 4.1]{Friedrich:15-2} using an increasing sequence $(b_i)_i$ satisfying ${\varphi}(b_i) \ge 2^{i+1}$ for $i \in \N$.)  Observe that, as concave function with $\psi(0) \ge 0$, $\psi$ is subadditive.
Our goal is to show that for each   $i=1,\ldots,m$ we have
\begin{align}\label{eq: psi component}
\sup\nolimits_{k \in \N} \int_{\Omega'} \psi (|u^i_k| ) \,dx \le C < + \infty  \ \ \ \text{for all $k \in \N$}.
 \end{align}
Here and in the following, the superscript indicates the $i$-th component. Once \eqref{eq: psi component} is established, we can conclude  $\int_{\Omega'} \psi (|u_k| ) \,dx  \le \sum_{i=1}^m \int_{\Omega'} \psi (|u^i_k| ) \,dx \le Cm$ by the subadditivity of $\psi$.

Let us now confirm \eqref{eq: psi component}. We define the function $v^i_k = \psi(|u^i_k|) \in GSBV(\Omega')$ and note that \BBB  $|\nabla v^i_k | \le \Vert \psi' \Vert_\infty |\nabla u^i_k| \le \Vert \psi' \Vert_\infty |\nabla u_k|$ $\mathcal{L}^d$-a.e.\ in $\Omega'$. By ($f2$) and $E_k(u_k) \le \BBB C_* \EEE$ this implies  
\begin{align}\label{eq: casea1}
\Vert \nabla v^i_k \Vert_{L^p(\Omega')}^p \le \Vert \psi' \Vert_\infty^p\Vert \nabla u_k \Vert_{L^p(\Omega')}^p \le \Vert \psi' \Vert_\infty^p c_1^{-1} \int_{\Omega'}  f_k(x,\nabla u_k(x))\, dx \le \Vert \psi' \Vert_\infty^p c_1^{-1} \BBB C_*. \EEE
\end{align}
Moreover, for  $\mathcal{H}^{d-1}$-a.e.\ \UUU point of \EEE $J_{v^i_k}$ we find by the fact that $\psi$ is increasing and subadditive 
$$|[v^i_k]| = |(v^i_k)^+ - (v^i_k)^-| = | \psi(|(u^i_k)^+|) -\psi(|(u^i_k)^-|)| \le \psi(|(u^i_k)^+ - (u^i_k)^-|) = \psi(|[u^i_k]|)  \le \psi(|[u_k]|).  $$
Using $\psi \le \varphi +2$, \eqref{eq: newcond}, and $E_k(u_k) \le \BBB C_* \EEE$ we derive
\begin{align}\label{eq: casea2}
\int_{J_{v^i_k}}  |[v^i_k]| \, d\mathcal{H}^{d-1} \le \int_{J_{u_k}} \big( \varphi(|[u_k]|)+2\big) \, d\mathcal{H}^{d-1} \le \frac{2}{c_4} \int_{J_{u_k}}  g_k(x, [u_k], \nu_{u_k} ) \, d\mathcal{H}^{d-1} \le \frac{2}{c_4} \BBB C_*. \EEE 
\end{align}
Now H\"older's inequality and \eqref{eq: casea1}-\eqref{eq: casea2} imply the bound  $|D v^i_k|(\Omega') \le C$  on the total variation, where $C=C(C_*, \Omega', c_1, c_4,p, \Vert \psi' \Vert_\infty)$.  By the Poincar\'e inequality in $BV$ (see \cite[Remark 3.50]{Ambrosio-Fusco-Pallara:2000}) we therefore find $b^i_k \in \R$ such that
$$\Vert v^i_k  - b^i_k \Vert_{L^1(\Omega')} \le C|D v^i_k|(\Omega') \le C.$$
As $v^i_k = \psi(|h^i_k|) $ on $\Omega' \setminus \overline{\Omega}$, we also deduce $\Vert \psi(|h^i_k|)  - b^i_k \Vert_{L^1(\Omega' \setminus \Omega)} \le C$ and therefore
 $$\Vert v^i_k  \Vert_{L^1(\Omega')} \le   C + C\Vert \psi(|h_k|) \Vert_{L^1(\Omega')}.$$
Using $\psi(t) \le \varphi(t) + 2 \le t+2$, we note that $\Vert \psi(|h_k|) \Vert_{L^1(\Omega')}$ is uniformly bounded in $k$. Recalling $v^i_k = \psi(|u^i_k|)$, this shows \eqref{eq: psi component} and concludes the proof. \nopagebreak\hspace*{\fill}$\Box$
}
 \end{remark}

\section{Existence and $\Gamma$-convergence results for free discontinuity problems}\label{sec: appli}

In this section we provide some applications of the compactness result to boundary value problems. In the following, we suppose that there exist two bounded Lipschitz domains $\Omega' \supset \Omega$. We will impose Dirichlet boundary data on  $\partial_D \Omega := \Omega' \cap \partial \Omega$. As usual for the weak formulation in the frame of $SBV$ functions, this will be done by requiring that configurations $u$ satisfy $u = h$ on $\Omega' \setminus \overline{\Omega}$ for some $h \in W^{1,p}(\Omega';\R^m)$. We will first present an existence result and then address $\Gamma$-convergence for energies in the class $\mathcal{E}_\Omega$.

\subsection{Existence}
As a first application, we prove an existence result for energy functionals in the class $\mathcal{E}_{\Omega'}$ introduced in Section \ref{sec: formulation}.

\begin{theorem}[Existence result for free discontinuity problems in $GSBV^p$]\label{th: existence}
Let $\Omega \subset \Omega' \subset \R^d$ be bounded Lipschitz domains. Let $E \in \mathcal{E}_{\Omega'}$ be lower semicontinuous  in $L^0(\Omega';\R^m)$ and let $h \in  W^{1,p}(\Omega';\R^m)$. Then the minimization problem 
\begin{align*}
\inf_{{u \in \UUU L^0(\Omega';\R^m)\EEE}} \lbrace E(u): \  u = h \text{ on } \Omega' \setminus \overline{\Omega} \rbrace
\end{align*}
admits solutions. 
\end{theorem}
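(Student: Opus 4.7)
The plan is to apply the direct method, using Theorem \ref{th: comp} to extract a converging (modified) minimizing sequence and then the assumed $L^0$-lower semicontinuity to identify the minimum.

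First I would take a minimizing sequence $(u_k)_k \subset L^0(\Omega';\R^m)$ with $u_k = h$ on $\Omega' \setminus \overline{\Omega}$ and $E(u_k) \to \inf$. Since $\inf \le E(h) \le c_2(\mathcal{L}^d(\Omega') + \Vert \nabla h \Vert_{L^p(\Omega')}^p) < +\infty$ by ($f2$) and the fact that $J_h = \emptyset$, we may assume $\sup_k E(u_k) < +\infty$. In particular $u_k|_{\Omega'} \in GSBV^p(\Omega';\R^m)$, and by ($f2$) and ($g3$) we obtain uniform bounds
\[
\sup_k \Vert \nabla u_k \Vert_{L^p(\Omega')}^p \le c_1^{-1} \sup_k E(u_k), \qquad \sup_k \mathcal{H}^{d-1}(J_{u_k}) \le c_4^{-1} \sup_k E(u_k).
\]

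Next I would apply Theorem \ref{th: comp} with $E_k := E$ for every $k$ and with the constant sequence $h_k := h$ (which trivially converges to $h$ in $L^p$ and whose gradients are equi-integrable). This yields a subsequence (not relabeled), modifications $(y_k)_k \subset GSBV^p(\Omega';\R^m)$ with $y_k = h$ on $\Omega' \setminus \overline{\Omega}$ satisfying
\[
E(y_k) \le E(u_k) + \tfrac{1}{k},
\]
and a function $u \in GSBV^p(\Omega';\R^m)$ with $u = h$ on $\Omega' \setminus \overline{\Omega}$ such that $y_k \to u$ in measure on $\Omega'$.

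Finally, using the assumed lower semicontinuity of $E$ with respect to convergence in $L^0(\Omega';\R^m)$ (i.e., convergence in measure), I would conclude
\[
E(u) \le \liminf_{k \to \infty} E(y_k) \le \liminf_{k \to \infty} \bigl(E(u_k) + \tfrac{1}{k}\bigr) = \inf_{u'} \lbrace E(u'): u' = h \text{ on } \Omega' \setminus \overline{\Omega} \rbrace.
\]
Since $u$ itself is admissible (it agrees with $h$ on $\Omega' \setminus \overline{\Omega}$), the reverse inequality is trivial and $u$ is a minimizer.

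There is no real obstacle here: the whole argument is a routine application of the direct method, with Theorem \ref{th: comp} replacing the usual compactness step. The only subtlety is that the compactness theorem produces modifications $(y_k)_k$ rather than a subsequence of $(u_k)_k$; however, the inequality $E(y_k) \le E(u_k) + 1/k$ together with the fact that $(y_k)_k$ respects the boundary condition ensures that $(y_k)_k$ is itself a minimizing sequence, so the argument goes through unchanged.
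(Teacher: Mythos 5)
Your proposal is correct and follows exactly the paper's argument: a direct-method proof in which Theorem \ref{th: comp} (applied with the constant sequences $E_k = E$ and $h_k = h$) supplies the compact modified minimizing sequence $(y_k)_k$, and the assumed $L^0$-lower semicontinuity identifies the limit as a minimizer. The details you add (finiteness of the infimum via $E(h)$, the a priori bounds from ($f2$) and ($g3$), and the observation that $(y_k)_k$ is still minimizing and admissible) are exactly the ones the paper leaves implicit.
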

 
\begin{proof}
The result follows from Theorem \ref{th: comp} and the direct method. Indeed, choosing a  minimizing   sequence $(u_k)_k$, we find another minimizing sequence $(y_k)_k$ converging in measure to some $u \in GSBV^p(\Omega';\R^m)$ with $u = h$ on $\Omega' \setminus \overline{\Omega}$. The lower semicontinuity of $E$ with respect to convergence in measure then yields that $u$ is a minimizer.  
\end{proof}

Without going into details, let us just briefly mention that in  \cite{Ambrosio:90, Ambrosio:90-2}, lower semicontinuity for functionals $E \in \mathcal{E}_{\Omega'}$ with respect to measure convergence  is ensured (under the assumption that $g$ is continuous) by quasiconvexity for the bulk density $f$ and $BV$-ellipticity  \cite{AmbrosioBraides2} for the surface density $g$.

Clearly, the minimizer of the problem is independent of the definition of $f(x,\xi)$ for $x \in \Omega' \setminus {\Omega}$ and independent of $g(x,\zeta,\nu)$ for \BBB $x \in \Omega' \setminus \overline{\Omega}$. \EEE The value of  $g(x,\zeta,\nu)$ for $x \in \partial_D \Omega$, however, may affect the minimization problem. Indeed, it might be energetically favorable if the crack runs alongside $\partial_D \Omega$. In this case, the boundary datum is not attained in the sense of traces, at the expense of a crack energy. Below in Section \ref{sec: gamma}, we will present a variant where the minimizer is determined only by $g(x,\zeta,\nu)$ for $x \in \Omega$,   see Remark \ref{rem: bdy2}.

\subsection{$\Gamma$-convergence}\label{sec: gamma}

We now revisit the $\Gamma$-convergence result for free discontinuity problems established recently in \cite{Caterina}. There, for  minimization problems involving an $L^p$-perturbation of the  energy functionals \eqref{eq: main energy functional},   convergence of minimum values and minimizers is proved. In the present contribution, we treat boundary value problems without any $L^p$-perturbation instead. 
 
 For the application to $\Gamma$-convergence results,  we need some further assumptions on the bulk density $f: \Omega \times \R^{m \times d} \to \R_+$ and the surface density $g: \Omega \times \R^m_0 \times \mathbb{S}^{d-1} \to \R_+$, see \cite{Caterina}. Let $c_1,\ldots,c_5$ be the constants in the definition of $\mathcal{E}_\Omega$ in Section \ref{sec: compactness}. Moreover, we let 
$\sigma_1, \sigma_2: \R_+ \to \R_+$ be two nondecreasing continuous functions with $\sigma_1(0) = \sigma_2(0) =0$. By $\mathcal{E}'_\Omega \subset \mathcal{E}_\Omega$ we denote  the collection of   integral functionals \eqref{eq: main energy functional} where additionally the following holds:
\begin{itemize}
\item[($f3$)] (continuity in $\xi$) for every $x \in \Omega$   we have
$$|f(x,\xi_1) - f(x,\xi_2) | \le \sigma_1(|\xi_1 - \xi_2|)(1+f(x,\xi_1) + f(x,\xi_2)) $$
for every $\xi_1,\xi_2 \in \R^{m\times d}$,
\item[($g5$)] (estimate for $|\zeta_1| \le |\zeta_2|$) for every $x \in \Omega$ and every $\nu \in \mathbb{S}^{d-1}$ we have
$$g(x, \zeta_1 , \nu)   \le c_3 g(x, \zeta_2, \nu)$$
for every $\zeta_1, \zeta_2 \in \R^m_0$ with $|\zeta_1| \le |\zeta_2|$,
\item[($g6$)] (continuity in $\zeta$) for every $x \in \Omega$ and every $\nu \in \mathbb{S}^{d-1}$ we have
$$|g(x,\zeta_1,\nu) - g(x,\zeta_2,\nu) | \le \sigma_2(|\zeta_1 - \zeta_2|)(g(x,\zeta_1,\nu) + g(x,\zeta_2,\nu)) $$
for every $\zeta_1,\zeta_2 \in \R^m_0$.
\end{itemize}
Besides the two continuity conditions, in \cite{Caterina} additionally  ($g5$) is required which is a kind of `monotonicity condition' for the jump height $|\zeta|$. We refer to \cite[Remark 3.2, 3.3]{Caterina} for more details. We include ($g5$) here only for the reader's convenience to ease reference to the assumptions in \cite{Caterina}. Actually, the condition already follows (with different constants) from ($g3$). In the following we denote by $\mathcal{A}(\Omega)$ the open subsets of $\Omega$.


\begin{theorem}[Compactness of $\Gamma$-convergence, see \cite{Caterina}]\label{th: caterina}
Let $(E_k)_k$ be a sequence in $\mathcal{E}'_\Omega$ with densities $(f_k)_k$ and $(g_k)_k$. Then there exists a  subsequence (not relabeled) and  $f: \Omega \times \R^{m \times d} \to \R_+$,  $g: \Omega \times \R^m_0 \times \mathbb{S}^{d-1} \to \R_+$ such that for all $A \in \mathcal{A}(\Omega)$
$$E_k(\cdot,A) \text{ $\Gamma$-converges to } E(\cdot, A) \text{ in $L^0(\Omega;\R^m)$}, $$
where $E: L^0(\Omega;\R^m) \times \mathcal{A}(\Omega) \to [0,+\infty]$ is given by \eqref{eq: main energy functional} and lies in $\mathcal{E}_{\Omega}'$. Moreover, we have
$$E^p_k(\cdot,A) \text{ $\Gamma$-converges to } E^p(\cdot, A) \text{ in $L^p(\Omega;\R^m)$}, $$
where $E^p_k$ and $E^p$ denote the restriction of $E_k$ and $E$ to $L^p(\Omega;\R^m) \times \mathcal{A}(\Omega)$, respectively. 
\end{theorem}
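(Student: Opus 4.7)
The plan is to follow the classical \emph{localization method} of De Giorgi, combined with Dal Maso's abstract $\Gamma$-compactness theorem and an integral representation result adapted to $GSBV^p$. First I would view each $E_k$ as a set function $A \mapsto E_k(u,A)$ on $\mathcal{A}(\Omega)$. Fix a countable family $\mathcal{R} \subset \mathcal{A}(\Omega)$ rich enough to approximate every open set by inclusion (for instance, finite unions of open polyhedra with rational vertices). Applying abstract $\Gamma$-compactness in $L^0(\Omega;\R^m)$ (which is metrizable by convergence in measure) to each $A \in \mathcal{R}$ and diagonalizing yields a subsequence along which $E_k(\cdot,A)$ $\Gamma$-converges to some functional $E(\cdot,A)$ for every $A \in \mathcal{R}$.

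The crucial step is the \emph{fundamental estimate}: for every $A' \subset\subset A$, every $B \in \mathcal{A}(\Omega)$, every $\eta>0$, and every pair $u,v \in GSBV^p$ with equi-bounded energies, there exists $w \in GSBV^p$ with $w = u$ near $\overline{A'}$ and $w = v$ near $\Omega \setminus A$ such that
\[
E_k(w, A'\cup B) \le (1+\eta)\bigl(E_k(u,A) + E_k(v,B)\bigr) + \eta + r_k,
\]
with $\limsup_{k\to\infty} r_k = 0$. The construction would slice the annulus $A\setminus\overline{A'}$ into thin strips and choose, by averaging, an optimal one on which to interpolate between $u$ and $v$; the continuity assumptions ($f3$) and ($g6$) ensure that the bulk and surface errors introduced on the strip are small, while the monotonicity ($g5$) bounds the surface energy of the new jumps produced on the interface. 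Once this estimate is established, the De Giorgi--Letta criterion implies that $A \mapsto E(u,A)$ is the trace on $\mathcal{A}(\Omega)$ of a Borel measure, and the $\Gamma$-convergence extends from $\mathcal{R}$ to all of $\mathcal{A}(\Omega)$ by inner regularity. An integral representation theorem for free discontinuity functionals on $GSBV^p$ (in the spirit of Bouchitt\'e--Fonseca--Mascarenhas) then identifies $E(\cdot, A)$ with a functional of the form \eqref{eq: main energy functional}, the densities $f, g$ being recovered by a blow-up analysis at Lebesgue points of $\nabla u$ and at jump points of $u$, respectively. The bounds ($f1$)-($f2$), ($g1$)-($g4$) and the continuity/monotonicity properties ($f3$), ($g5$)-($g6$) are inherited from $f_k, g_k$, since each such condition can be recast as a functional comparison inequality stable under $\Gamma$-convergence.

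For the second assertion, the $\Gamma$-$\liminf$ inequality in $L^p(\Omega;\R^m)$ is inherited trivially from the $L^0$ statement, since convergence in $L^p$ on a bounded set is stronger than convergence in measure. For the $\Gamma$-$\limsup$ inequality on $u \in L^p(\Omega;\R^m)$, an $L^0$-recovery sequence $(u_k)$ can be modified via Corollary \ref{cor: truncations} into a sequence in $GSBV^p \cap L^p$ with the same $L^0$-limit and asymptotically no larger energy; together with the compactness provided by Theorem \ref{th: comp} (applied with trivial boundary data on a slightly larger Lipschitz domain $\Omega'$) this promotes measure convergence to $L^p$-convergence. The main obstacle is precisely the fundamental estimate: since $g_k$ depends explicitly on $x$ and on the crack opening $[u]$, the cut-off arguments available when $g$ is constant do not apply directly. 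Assumption ($g5$) --- monotonicity of $g$ in $|\zeta|$ --- is the ingredient that lets one dominate the jump $[w]$ of the interpolated competitor on the cut-off interface by the maximum of $|[u]|$ and $|[v]|$, and hence to absorb the surface contribution in $(1+\eta)(E_k(u,A)+E_k(v,B))$.
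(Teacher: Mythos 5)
This theorem is not proved in the paper at all: it is quoted verbatim from \cite{Caterina} (note the ``see \cite{Caterina}'' in its title) and used as a black box, so there is no internal argument to compare yours against. Your outline --- localization over a countable base, abstract $\Gamma$-compactness and diagonalization, a fundamental estimate feeding the De Giorgi--Letta criterion, and an integral representation with blow-up identification of the densities --- is indeed the strategy actually carried out in \cite{Caterina}, so at the level of a roadmap it is the right one.

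As a proof, however, it is only a sketch, and the steps you assert are precisely the ones that occupy most of \cite{Caterina}. Two concrete gaps: first, the fundamental estimate cannot be stated with a remainder $r_k$ that is independent of $u$ and $v$ and tends to zero; the standard cut-off (whether by convex combination or by inserting a jump on an optimal slice chosen via the coarea formula) produces a remainder controlled by $\Vert u-v\Vert_{L^p}^p$ on the transition region, and handling this term in the absence of a priori $L^\infty$ or $L^p$ bounds on competitors is one of the delicate points of \cite{Caterina} (it is also why the $L^p$-truncation lemma there carries the extra term $c_2\mathcal{L}^d(\lbrace |u|\ge\lambda\rbrace)$ that the present paper goes to some length to avoid). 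Second, the claim that conditions ($f1$)--($f3$), ($g1$)--($g6$) ``can be recast as functional comparison inequalities stable under $\Gamma$-convergence'' is exactly what needs proving and is nontrivial: establishing that the limit densities satisfy the monotonicity and continuity conditions, and that the bulk and surface problems decouple (so that $f$ depends only on $(f_k)_k$ and $g$ only on $(g_k)_k$, as used in \eqref{eq: separation}), requires the asymptotic cell formulas and a substantial separate analysis. Your treatment of the $L^p$ statement also leans on Corollary \ref{cor: truncations} and Theorem \ref{th: comp} of the present paper, which is anachronistic for a result the paper imports as a prerequisite, even if not formally circular.
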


For a general theory of $\Gamma$-convergence we refer the reader to \cite{DalMaso:93}. The limiting bulk density $f$ and surface density $g$ associated to $E$ can be expressed in terms of the densities $f_k$ and $g_k$ via specific asymptotic cell formulas, see \cite[Theorem 3.5, Theorem 5.2]{Caterina}.  The crucial point is that the
problems for the volume and surface integrals are decoupled, i.e., $f$ depends only on the sequence $(f_k)_k$
while $g$ depends only on the sequence $(g_k)_k$. In particular, for $A \in \mathcal{A}(\Omega)$ and a sequence $(u_k)_k$ with $\sup_{k \in \N} E_k(u_k,A) < + \infty$ converging to  $u$ in measure on $A$, we have
\begin{align}\label{eq: separation}
\int_A f(x, \nabla u(x)) \, dx &\le  \liminf_{k \to \infty} \int_A f_k(x, \nabla u_k(x)) \, dx, \notag \\ \int_{J_u \cap A} g(x, [u],\nu_u) \, d\mathcal{H}^{d-1} & \le  \liminf_{k \to \infty} \int_{J_{u_k} \cap A} g_k(x, [u_k],\nu_{u_k}) \, d\mathcal{H}^{d-1}.
\end{align} 
A proof of this fact may be found in \cite[Proposition 4.3]{Giacomini-Ponsiglione:2006}. Now suppose that $(u_k)_k$ is a recovery sequence for $u$ with respect to the $L^p(\Omega;\R^m)$-convergence. We will use  the following general fact several times: if \UUU $A \in \mathcal{A}(\Omega)$  \EEE with $E(u,\partial A) = 0$, then $(u_k)_k$ is  also a recovery sequence with respect to $E_k(\cdot,A)$, see \cite[Remark 3.6]{Giacomini-Ponsiglione:2006}. Thus, if $E(u,\partial A) = 0$, we find by \eqref{eq: separation}
\begin{align}\label{eq: separation-recovery}
\int_A f(x, \nabla u(x)) \, dx & =  \lim_{k \to \infty} \int_A f_k(x, \nabla u_k(x)) \, dx, \notag \\ \int_{J_u \cap A} g(x, [u],\nu_u) \, d\mathcal{H}^{d-1} & =  \lim_{k \to \infty} \int_{J_{u_k} \cap A} g_k(x, [u_k],\nu_{u_k}) \, d\mathcal{H}^{d-1}.
\end{align}
\UUU Consider again bounded Lipschitz domains $\Omega' \supset \Omega$ and suppose that also $\Omega' \setminus \overline{\Omega}$ has Lipschitz boundary. \EEE To treat non-attainment of the boundary data (in the sense of traces) as internal jumps, we introduce energy functionals defined on $\Omega'$. We set
\begin{align}\label{eq: f ext}
f'_k(x,\xi) := \begin{cases} f_k(x,\xi) & \text{if } x \in \Omega, \\ c_1 |\xi|^p&  \text{otherwise.} \end{cases}
\end{align}
and 
\begin{align}\label{eq: g ext}
g'_k(x,\zeta,\nu) := \begin{cases} g_k(x,\zeta,\nu) & \text{if } x \in \Omega, \\ c_5 + 1 &  \text{otherwise.} \end{cases}
\end{align}

According to Theorem \ref{th: caterina}, the functionals $E_k' \in \mathcal{E}_{\Omega'}'$, with densities $f_k'$ and $g_k'$, $\Gamma$-converge in \UUU $L^0(\Omega';\R^m)$ \EEE (up to a subsequence) to some $E' \in \mathcal{E}_{\Omega'}'$ with densities $f'$ and $g'$. (Strictly speaking, we consider here the class $\mathcal{E}_{\Omega'}'$ with $c_5+1$ instead of $c_5$.) Then we clearly have 
$$
 f'(x,\xi) = \begin{cases} f(x,\xi) & \text{if } x \in \Omega, \\ c_1 |\xi|^p&  \text{otherwise} \end{cases}
$$
and $g'(x,\zeta,\nu) = g (x,\zeta,\nu)$ for $ x \in \Omega$. Below in Remark \ref{rem: bdy},  we will see that $g'(x,\zeta,\nu)$ for $x \in \partial_D \Omega$ is completely determined by the sequence $(g_k)_k$.

We now prove the following version of the $\Gamma$-convergence result that takes  boundary data into account.

\begin{lemma}[$\Gamma$-convergence with boundary data]\label{lemma: gamma bdy}
Suppose that $\Omega' \setminus \overline{\Omega}$ \BBB has Lipschitz boundary. \EEE  Let the sequence of functionals $(E'_k)_k \subset \mathcal{E}'_{\Omega'}$ with  densities $(f'_k)_k$, $(g'_k)_k$ and the limiting functional $E' \in \mathcal{E}'_{\Omega'}$ with  densities $f'$, $g'$ be given as above. Suppose that $(h_k)_k \subset W^{1,p}(\Omega';\R^m)$ converges strongly to $h$ in $W^{1,p}(\Omega';\R^m)$. Then the sequence of functionals 
$$\tilde{E}'_k(u) = \begin{cases} E'_k(u) & \text{if } u = h_k \text{ on } \Omega' \setminus \overline{\Omega}, \\ +\infty &  \text{otherwise},  \end{cases} $$ 
$\Gamma$-converges in $L^0(\Omega';\R^m)$ to 
$$\tilde{E}'(u) = \begin{cases} E'(u) & \text{if } u = h \text{ on } \Omega' \setminus \overline{\Omega}, \\ +\infty &  \text{otherwise.}  \end{cases}  $$

\end{lemma}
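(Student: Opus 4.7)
The plan is to verify the two $\Gamma$-convergence inequalities, following the strategy of \cite[Lemma 7.1]{Giacomini-Ponsiglione:2006}.

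For the $\Gamma$-$\liminf$ inequality, let $u_k \to u$ in $L^0(\Omega';\R^m)$. We may assume $\liminf_k \tilde E'_k(u_k)<+\infty$, so along a subsequence $u_k = h_k$ on $\Omega' \setminus \overline\Omega$ and $\tilde E'_k(u_k) = E'_k(u_k)$. Since $h_k \to h$ in $L^p$, extracting a further subsequence making the convergence pointwise almost everywhere, we obtain $u = h$ on $\Omega' \setminus \overline\Omega$, whence $\tilde E'(u)=E'(u)$; the inequality then follows from the $\Gamma$-liminf inequality for $E'_k \to E'$ provided by Theorem~\ref{th: caterina}.

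For the $\Gamma$-$\limsup$ inequality, fix $u$ with $u = h$ on $\Omega' \setminus \overline\Omega$ and $E'(u) < +\infty$. First, apply Corollary~\ref{cor: truncations} to $E'$ on $\Omega'$ with datum $h$ to obtain $(u^{(n)})_n \subset GSBV^p(\Omega';\R^m) \cap L^p(\Omega';\R^m)$ with $u^{(n)} = h$ on $\Omega' \setminus \overline\Omega$, $u^{(n)} \to u$ in measure, and $\limsup_n E'(u^{(n)}) \le E'(u)$. By a standard diagonal argument it suffices to construct, for each fixed $n$, a sequence $(u_k^{(n)})_k$ with $u_k^{(n)} = h_k$ on $\Omega' \setminus \overline\Omega$, $u_k^{(n)} \to u^{(n)}$ in $L^p$, and $\limsup_k E'_k(u_k^{(n)}) \le E'(u^{(n)})$. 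The $L^p$-part of Theorem~\ref{th: caterina} provides $(v_k^{(n)})_k \subset L^p(\Omega';\R^m)$ with $v_k^{(n)} \to u^{(n)}$ in $L^p$ and $\limsup_k E'_k(v_k^{(n)}) \le E'(u^{(n)})$; what remains is to correct the boundary trace of $v_k^{(n)}$.

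The correction is performed by a De Giorgi-type slicing on a thin tubular neighborhood of $\overline\Omega$ inside $\Omega' \setminus \overline\Omega$. For small $\eta>0$ and $s \in (\eta/2,\eta)$, set $V_s := \Omega \cup \{x \in \Omega' : 0 < \operatorname{dist}(x,\overline\Omega)<s\}$ and define
\begin{equation*}
u_{k,s}^{(n)} := v_k^{(n)} \chi_{V_s} + h_k \chi_{\Omega' \setminus V_s},
\end{equation*}
so that $u_{k,s}^{(n)} = h_k$ on $\Omega' \setminus V_\eta$, which covers $\Omega' \setminus \overline\Omega$ up to a layer of thickness $\eta$. Using Fubini/coarea with the Lipschitz function $x \mapsto \operatorname{dist}(x,\overline\Omega)$, one can choose a slice $s^* = s^*(k,\eta) \in (\eta/2,\eta)$ so that the $\mathcal H^{d-1}$-measure of the new jump set on $\partial V_{s^*}$ is bounded by $\frac{2}{\eta}\mathcal L^d(\{v_k^{(n)} \ne h_k\} \cap (V_\eta \setminus \Omega))$; the equi-integrability of $(|\nabla h_k|^p)_k$ takes care of the bulk term on $\Omega' \setminus V_{s^*}$; and the continuity assumption~($g6$), combined with the bound ($g3$) and the $L^p$-closeness of $v_k^{(n)}$ and $h_k$ on $\Omega' \setminus \overline\Omega$ (which follows from $v_k^{(n)} \to u^{(n)}$, $h_k \to h$, and $u^{(n)} = h$ there), converts the interfacial contribution into a vanishing quantity. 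A final diagonal extraction letting $k\to\infty$, then $\eta \to 0$, then $n \to \infty$ produces the recovery sequence.

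The main obstacle is the last step: although $v_k^{(n)} \to u^{(n)}$ in $L^p$, the set $\{v_k^{(n)} \ne h_k\}$ need not have vanishing Lebesgue measure, so the naive estimate via ($g3$) on the gluing jump is insufficient. The Fubini/coarea argument is indispensable to convert $L^p$-closeness on the tubular neighborhood into smallness of the $\mathcal H^{d-1}$-measure of the new jump set at a judiciously chosen slice, and ($g6$) is required to turn the remaining trace discrepancy into a controllable surface-energy contribution.
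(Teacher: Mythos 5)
Your $\Gamma$-liminf argument and the reduction of the $\Gamma$-limsup to $u \in GSBV^p(\Omega';\R^m)\cap L^p(\Omega';\R^m)$ via Corollary \ref{cor: truncations} match the paper. The boundary correction, however, has a genuine gap that your own closing remark does not resolve. At a good slice $s^*$ the Fubini/coarea argument bounds $\mathcal{H}^{d-1}(\lbrace \mathrm{tr}\, v_k^{(n)} \neq \mathrm{tr}\, h_k\rbrace \cap \partial V_{s^*})$ by $\tfrac{2}{\eta}\mathcal{L}^d(\lbrace v_k^{(n)} \neq h_k\rbrace \cap (V_\eta\setminus\Omega))$, but this right-hand side need not vanish: $v_k^{(n)} - h_k \to 0$ in $L^p(\Omega'\setminus\overline\Omega)$ is perfectly compatible with $v_k^{(n)} \neq h_k$ on a set of full measure (take $v_k^{(n)} = h_k + \eps_k$ with $\eps_k \to 0$). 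The new interface then carries a jump of small amplitude on a set of $\mathcal{H}^{d-1}$-measure comparable to $\mathcal{H}^{d-1}(\partial V_{s^*})$, and by the uniform lower bound ($g3$), $g'_k \ge c_4 > 0$, its surface energy is at least $c_4\,\mathcal{H}^{d-1}(\lbrace \mathrm{tr}\, v_k^{(n)} \neq \mathrm{tr}\, h_k\rbrace\cap\partial V_{s^*})$, which does not tend to zero. Assumption ($g6$) cannot convert small jump amplitude into small surface energy precisely because of this lower bound, so the interfacial contribution is not a vanishing quantity.

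The paper circumvents this by first proving that the $L^p$-recovery sequence $(u_k)_k$ itself satisfies $\mathcal{H}^{d-1}(J_{u_k}\cap(\Omega'\setminus\Omega)) \to 0$ and $\nabla u_k - \nabla h_k \to 0$ in $L^p(\Omega'\setminus\overline\Omega;\R^{m\times d})$ (properties \eqref{eq: good approx}(ii),(iii)). The proof of (iii) is the crux: one argues by contradiction via a reflection of $u_k$ across the Lipschitz boundary $\partial_D\Omega$, which moves any putative concentration of jump set from $\Omega'\setminus\Omega$ into $\Omega$; since the extended density was deliberately set to $g'_k = c_5+1$ outside $\Omega$ while $g_k \le c_5$ inside (see \eqref{eq: g ext}), the reflected competitor strictly lowers the surface energy, contradicting the recovery-sequence property via \eqref{eq: separation}--\eqref{eq: separation-recovery}. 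With \eqref{eq: good approx} in hand, the boundary values are corrected not by a sharp-interface gluing but by subtracting $\psi\, y_k$, where $y_k$ is a $GSBV^p$ extension of $u_k - h_k$ whose $L^p$, gradient, and jump norms all vanish, so the correction perturbs the energy negligibly. Your proposal is missing exactly this step; without it neither your slicing nor a smooth interpolation $\phi v_k + (1-\phi)h_k$ controls the surface energy in the transition layer.
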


\begin{proof}
We follow the proof in \cite[Lemma 7.1]{Giacomini-Ponsiglione:2006}. In particular, we  highlight the necessary adaptions in our setting which are related to the fact that (a) the surface densities also depend on the crack opening and (b) we prove that   \UUU $g'$ \EEE is determined completely by $(g_k)_k$, see Remark \ref{rem: bdy}.

 First, the $\Gamma$-liminf is immediate from the $\Gamma$-convergence of $E'_k$ to $E'$ and the fact that the constraint is closed under the convergence in measure. We now address the  $\Gamma$-limsup.  Due to a general approximation argument in the theory of $\Gamma$-convergence together with Corollary \ref{cor: truncations}, it suffices to construct recovery sequences for $u \in GSBV^p(\Omega';\R^m) \cap L^p(\Omega';\R^m)$ with $u=h$ on $\Omega' \setminus \overline{\Omega}$.

By Theorem \ref{th: caterina} there exists a recovery sequence $(u_k)_k$ for $u$ with respect to $L^p$-convergence, i.e., $\Vert u_k - u \Vert_{L^p(\Omega')} \to 0$ and $\lim_{k\to \infty} E'_k(u_k) = E'(u)$. We note that \eqref{eq: separation}-\eqref{eq: separation-recovery} hold (for the densities defined in \eqref{eq: f ext}-\eqref{eq: g ext}). We claim that 
\begin{align}\label{eq: good approx}
(i) & \ \ u_k - h_k \to 0 \text{ strongly in } \UUU L^p(\Omega' \setminus \overline{\Omega};\R^{m}), \EEE \notag \\
(ii) & \ \ \nabla u_k - \nabla h_k \to 0 \text{ strongly in } L^p(\Omega' \setminus \overline{\Omega};\R^{m \times d}), \notag\\
(iii)& \ \  \mathcal{H}^{d-1}(J_{u_k} \cap (\Omega' \setminus {\Omega}) ) \to 0. 
\end{align}
We defer the proof of \BBB these properties \EEE to the end of the proof. 

\emph{Definition of the recovery sequence:} We can find a neighborhood $U \supset\supset \Omega' \setminus \overline{\Omega}$ and an extension $(y_k)_k\subset GSBV^p(U;\R^m)$ satisfying $y_k =u_k - h_k$ on $\Omega' \setminus \overline{\Omega}$ such that in view of \eqref{eq: good approx}
\begin{align}\label{eq: good approx2}
\Vert   y_k \Vert_{L^p(U)} + \Vert \nabla y_k \Vert_{L^p(U)} + \mathcal{H}^{d-1}(J_{y_k} \cap U) \to 0
\end{align}
as $k \to \infty$. This can be done, e.g., as in \cite[Theorem 3.1, Theorem 8.1]{Scardia} with $GSBV^p \cap L^p$ in place of $SBV^2 \cap L^2$. (In both cases, the problem can be reduced to more regular functions by approximation \cite{Cortesani:1997}.)

Let $\eps > 0$ and choose $V$ open with $V \supset \overline{\partial_D\Omega}$, $V \subset U$, $E'(u,\partial (V \cap \Omega')) = 0$, $\mathcal{L}^d(V)\le \eps$, and $\int_{V\cap \Omega'} f'(x,\nabla u(x))\, dx< \eps$. Then by \eqref{eq: separation-recovery} we also get 
\begin{align}\label{eq: limiting eps}
\limsup_{k \to \infty} \int_{V \cap \Omega'} f'_k(x,\nabla u_k(x)) \, dx < \eps.
\end{align}
Choose $W \subset \R^d$ open such that ${\Omega'} \setminus \overline{\Omega} \subset W$ and $\overline{W} \cap \overline{\Omega \setminus V} = \emptyset$.  Let $\psi \in C^\infty(\Omega')$ with \UUU $0 \le \psi \le 1$, \EEE $\psi = 0$ on $\Omega \setminus V$ and $\psi =1$ on $W \cap \Omega'$. Define $\varphi_k \in GSBV^p(\Omega';\R^m)$ by $\varphi_k =  \psi y_k$ on \UUU $U \cap \Omega'$ \EEE and $\varphi_k = 0$ else. Note   by \eqref{eq: good approx2} that
\begin{align}\label{eq: last g2}
\Vert   \varphi_k \Vert_{L^p(\Omega')} + \Vert \nabla \varphi_k \Vert_{L^p(\Omega')} + \mathcal{H}^{d-1}(J_{\varphi_k} \cap \Omega') \to 0.
\end{align}
Now we set $\tilde{u}_k  := u_k - \varphi_k$. Then $\tilde{u}_k = u_k - y_k$ on $W \cap \Omega'$ and thus $\tilde{u}_k = h_k$ on $\Omega' \setminus \overline{\Omega}$. Moreover, $\tilde{u}_k = u_k$ on $\Omega \setminus V$. We also observe that $\tilde{u}_k \to u$ in $L^p(\Omega';\R^m)$ by \eqref{eq: last g2}. We now estimate $\tilde{E}_k'(\tilde{u}_k)$. As $\mathcal{H}^{d-1}(J_{\varphi_k} \cap \Omega') \to 0$, we find by ($g3$)
\begin{align}\label{eq: last g}
\limsup_{k \to \infty} \int_{J_{\tilde{u}_k}} g'_k(x,[\tilde{u}_k],\nu_{\tilde{u}_k}) \, \mathcal{H}^{d-1} \le \limsup_{k \to \infty} \int_{J_{{u}_k}} g'_k(x,[{u}_k],\nu_{{u}_k}) \, \mathcal{H}^{d-1}.  
\end{align}
Moreover,  \eqref{eq: f ext} implies
\begin{align*}
\int_{\Omega'} |f'_k(x,\nabla u_k) - f'_k(x,\nabla \tilde{u}_k) |\, dx \le  \int_{V \cap \Omega} \big(f_k(x,\nabla u_k) + f_k(x,\nabla \tilde{u}_k) \big)  + c_1 \int_{\Omega' \setminus \Omega} ||\nabla u_k|^p -  |\nabla h_k|^p|.
\end{align*}
The rightmost term converges to zero for $k \to \infty$ by \eqref{eq: good approx}(ii). By using the growth conditions ($f2$), \eqref{eq: limiting eps}-\eqref{eq: last g2},  and $\mathcal{L}^d(V)\le \eps$   we find
 \begin{align*}
\limsup_{k \to \infty}  \int_{V \cap \Omega} \big(f_k(x,\nabla u_k) + f_k(x,\nabla \tilde{u}_k)\big)  \, dx  &\le c_2 \mathcal{L}^d( V) +  2^{p-1}c_2 \limsup_{k \to \infty}  \int_{V\cap \Omega} |\nabla \varphi_k|^p \, dx \\ &  \ \ \ + (1+2^{p-1}c_2 c_1^{-1}) \limsup_{k \to \infty}  \int_{V\cap \Omega} f_k(x,\nabla u_k) \, dx \\
& \le c_2\eps + (1+2^{p-1}c_2 c_1^{-1})\eps.
  \end{align*}
By   \eqref{eq: last g} \UUU and the fact that $E_k'(u_k) \to E'(u) = \tilde{E}'(u)$, \EEE we then derive
$$ \limsup_{k \to \infty}  \tilde{E}'_k(\tilde{u}_k) \le \limsup_{k \to \infty} E'_k( {u}_k)  +  c_2\eps + (1+2^{p-1}c_2 c_1^{-1})\eps \le \tilde{E}'(u) +   c_2\eps + (1+2^{p-1}c_2 c_1^{-1})\eps. $$  
 Since $\eps$ was arbitrary, using a diagonal argument we have proved the $\Gamma$-limsup inequality.

\emph{Proof of \eqref{eq: good approx}:}  To conclude, it remains to show \eqref{eq: good approx}. First, to see (i), we recall $u_k \to h$ in \UUU $L^p(\Omega' \setminus \overline{\Omega};\R^m)$ \EEE as $(u_k)_k$ is a recovery sequence in $L^p$. Then it suffices to use that $h_k \to h$ in $L^p(\Omega';\R^m)$. We now address (ii). Let $A \in \mathcal{A}(\Omega')$, $\overline{A} \subset \Omega' \setminus \overline{\Omega}$ with $E'(u,\partial A) = 0$.   Then \eqref{eq: separation-recovery} and \eqref{eq: f ext} imply 
\begin{align}\label{eq: first convergence}
\nabla u_k \to \nabla h \ \ \text{ in } \ L^p(A;\R^{m \times d}).
\end{align} For $\eps >0$  consider $V$ open with $V \supset \overline{\partial_D\Omega}$ such that  $E'(u,\partial (V\cap \Omega')) = 0$, $\mathcal{L}^d(V)< \eps$,  and 
$$\int_{V \cap \Omega'} f'(x,\nabla u(x)) \, dx < \eps, \ \ \ \ \int_{V \cap \Omega'} f'(x,\nabla h_k(x))\,dx <\eps \ \ \ \ \text{for all } k \in \N. $$
(The latter is possible by ($f2$) and the fact that $\nabla h_k \to \nabla h$ strongly in $L^p(\Omega';\R^{m\times d})$.)  
For $k$ large enough, we also have  $\int_{V \cap \Omega'} f'_k(x,\nabla u_k(x)) \, dx < \eps$ by \eqref{eq: separation-recovery}. Then we calculate by ($f2$)
\begin{align*}
\int_{\Omega' \setminus \overline{\Omega}} |\nabla u_k - \nabla h_k|^p\,dx    & = \int_{\Omega' \setminus (\Omega \cup V)}|\nabla u_k - \nabla h_k|^p\,dx    + \int_{V \cap (\Omega' \setminus \overline{\Omega})}|\nabla u_k - \nabla h_k|^p \,dx   \\
& \le  \int_{\Omega' \setminus (\Omega \cup V)}|\nabla u_k - \nabla h_k|^p\, dx   + \frac{2^{p-1}}{c_1}    \int_{V \cap \Omega'}\hspace{-0.2cm} (f'_k(x,\nabla u_k) +  f'(x,\nabla h_k))\,dx.
\end{align*}
Then \eqref{eq: first convergence} and the fact that $\Vert \nabla h_k - \nabla h \Vert_{L^p(\Omega')} \to 0$ yield 
$$\limsup_{k \to \infty}\int_{\Omega' \setminus \overline{\Omega}} |\nabla u_k - \nabla h_k|^p \, dx \le 2^p c_1^{-1} \eps.$$ Since $\eps$ was arbitrary, we obtain (ii). We finally prove (iii). Up to a subsequence we have
$$\mu_k := \mathcal{H}^{d-1}|_{J_{u_k} \cap (\Omega' \setminus {\Omega})} \stackrel{*}{\rightharpoonup} \mu  \ \ \ \text{weakly$^*$ in } \ \mathcal{M}_b(\Omega').$$
By \eqref{eq: separation-recovery} we observe $\mathcal{H}^{d-1}(J_{u_k} \cap U) \to 0$ for all $U \in \mathcal{A}(\Omega')$, $\overline{U} \subset \Omega' \setminus \overline{\Omega}$, and $E'(u,\partial U) = 0$. Consequently, to conclude the proof of (iii), it suffices to show $\mu(\partial_D\Omega) = 0$. We argue by contradiction. Let us assume that $\mu(\partial_D\Omega) > 0$. Then there exists a cube $Q_\rho$ with center $x \in \partial_D\Omega$ and sidelength $2\rho$ such that $Q_\rho \subset \Omega'$, \BBB $E'(u,\partial Q_\rho) = 0$, \EEE and   $\mu(Q_\rho) > \sigma > 0$. We may also suppose that $Q_{4\rho} \subset \Omega'$, where $Q_{4\rho}$  \UUU denotes the  cube with center $x$ and sidelength $8\rho$. \EEE For $k$ large enough we also have
\begin{align}\label{eq: sigma}
\mathcal{H}^{d-1}( J_{u_k} \cap (Q_\rho \setminus {\Omega}) )  = \mu_k(Q_\rho ) > \sigma >0. 
\end{align}

Following the proof of \cite[Lemma 7.1]{Giacomini-Ponsiglione:2006},  one can modify the sequence $(u_k)_k$ by a reflection method and move the jump set inside $\Omega$. This will lead to a contradiction as we assumed that $(u_k)_k$ is a recovery sequence, but inside $\Omega$ the surface energy is much less than in $\Omega' \setminus {\Omega}$. In contrast to \cite{Giacomini-Ponsiglione:2006}, the construction is a bit more delicate here since the surface densities also depend on the crack opening.   Possibly after passing to a smaller $\rho$ (not relabeled), we can assume that in a suitable coordinate system
$$\Omega \cap Q_{4\rho} = \lbrace (x',y): \ x' \in (-4\rho,4\rho)^{d-1}, \ y \in (-4\rho,\tau(x')) \rbrace$$
for a Lipschitz function $\tau$ with $\Vert \tau \Vert_\infty \le \rho$. \UUU We choose $\eta \in (2\rho,3\rho)$ such that 
$$V_\rho := \lbrace (x',y): \ x' \in (-\rho,\rho)^{d-1}, \ y \in (\tau(x')-\eta,\tau(x')+\eta) \rbrace$$ satisfies $E'(u,\partial V_{\rho}) = 0 $. Note that $Q_\rho \subset V_\rho$ since $\eta > 2\rho$. \EEE Let $\hat{u}$ be the function defined on $V_{\rho}$ by reflecting $u$ at $\tau(x')$, $x' \in (-\rho,\rho)^{d-1}$, i.e.,
\begin{align*}
\hat{u}(x',y) = \begin{cases} 
u(x',y)  & y > \tau(x'),   \\  u(x', 2\tau(x') - y) &  y < \tau(x'). 
\end{cases}
\end{align*}
Clearly $\hat{u}\in W^{1,p}(V_{\rho};\R^m)$ as $u \in W^{1,p}(\Omega' \setminus \overline{\Omega};\R^m)$. In a similar fashion, we define $\hat{u}_k$ on $V_{\rho}$ by
\begin{align*}
\hat{u}_k(x',y) = \begin{cases} 
u_k(x',y)  & y > \tau(x') -\lambda_k,  \\  u_k(x', 2(\tau(x')-\lambda_k) - y) &  y < \tau(x')-\lambda_k, 
\end{cases}
\end{align*}
where $0 < \lambda_k \le 1/k$ is chosen such that
\begin{align}\label{eq: etk}
\mathcal{H}^{d-1} \Big(\Big\{ (x',y) \in J_{u_k}: \ x' \in (-\rho,\rho)^{d-1}, \ y \in (\tau(x')-\lambda_k,\tau(x')) \Big\}\Big) \le \frac{1}{k}.
\end{align} 
\UUU We note that  the functions are well defined since $Q_{4\rho} \subset \Omega'$, $\Vert \tau \Vert_\infty \le \rho$,  and $\eta < 3\rho$. \EEE We now introduce the sequence 
$$w_k:= u_k + \hat{u} - \hat{u}_k  \in   GSBV^p(V_\rho;\R^m).$$
The definition and $\lambda_k \to 0$ implies that  $w_k \to u$ in measure on $V_{\rho}$. Moreover, we find 
\begin{align}\label{eq: 3 prop}
(i)& \ \  \mathcal{H}^{d-1}( J_{w_k} \cap (V_\rho \setminus {\Omega}) ) = 0,\notag  \\ 
(ii)&  \ \  \UUU \mathcal{H}^{d-1}(J_{w_k} \setminus \Gamma_k)  \le\mathcal{H}^{d-1} \big(\big\{ (x',y) \in V_\rho \cap J_{u_k}: \ y > \tau(x')-\lambda_k \rbrace\big).  \EEE
\end{align}
Here, \BBB with the choice $\nu_{w_k} = \nu_{u_k}$ $\mathcal{H}^{d-1}$-a.e.\ on $J_{w_k} \cap J_{u_k}$, $\Gamma_k$ is defined by \EEE 
$$\Gamma_k: = \big\{ x \in J_{w_k} \cap J_{u_k}: \ [u_k](x) = [w_k](x)\big\}.$$
In particular, the jump of $w_k$ lies inside ${\Omega}$.  By ($g3$) and \eqref{eq: 3 prop}(i) we now find
$$G(w_k):=  \int_{J_{w_k} \cap V_\rho} g'_k(x, [w_k],\nu_{w_k}) \, d\mathcal{H}^{d-1} \le \int_{J_{u_k} \cap \Gamma_k} g'_k(x, [u_k],\nu_{u_k}) \, d\mathcal{H}^{d-1} + c_5 \mathcal{H}^{d-1}(J_{w_k} \setminus \Gamma_k).  $$
Then by \eqref{eq: etk}  and  \eqref{eq: 3 prop}(ii) we derive
\begin{align*}
G(w_k) \le \int_{J_{u_k} \cap \Gamma_k} g'_k(x, [u_k],\nu_{u_k}) \, d\mathcal{H}^{d-1} +c_5/k + c_5\mathcal{H}^{d-1}( J_{u_k} \cap (V_\rho \setminus {\Omega}) ).
\end{align*}
Therefore, by \eqref{eq: g ext}, \eqref{eq: sigma},  $\Gamma_k \subset J_{w_k} \subset \Omega \BBB \cap V_\rho \EEE$, \UUU and $Q_\rho \subset V_\rho$ \EEE we get 
\begin{align}\label{eq: to contra}
G(w_k) &\le \int_{J_{u_k} \cap \Gamma_k} g'_k(x, [u_k],\nu_{u_k}) \, d\mathcal{H}^{d-1} +c_5/k + (c_5+1)\mathcal{H}^{d-1}( J_{u_k} \cap (V_\rho \setminus {\Omega}) ) - \sigma \notag\\
& \le  \int_{J_{u_k} \cap V_\rho} g'_k(x, [u_k],\nu_{u_k}) \, d\mathcal{H}^{d-1} +c_5/k - \sigma.
\end{align}
On the other hand, recalling that $w_k \to u$ in measure on $V_\rho$, we have  by \eqref{eq: separation}  
$$ \int_{J_u \cap V_\rho} g'(x, [u],\nu_u) \, d\mathcal{H}^{d-1}  \le  \liminf_{k \to \infty} \int_{J_{w_k} \cap V_\rho} g'_k(x, [w_k],\nu_{w_k}) \, d\mathcal{H}^{d-1} = \liminf_{k \to \infty}  G(w_k).$$
Moreover, since $(u_k)_k$ is a recovery sequence for $u$ and $E'(u,\partial V_\rho) = 0$, \eqref{eq: separation-recovery} yields
$$ \int_{J_u \cap V_\rho} g'(x, [u],\nu_u) \, d\mathcal{H}^{d-1}  =  \lim_{k \to \infty} \int_{J_{u_k} \cap V_\rho} g'_k(x, [u_k],\nu_{u_k}) \, d\mathcal{H}^{d-1}.
$$
 The previous two equations contradict \eqref{eq: to contra}. This concludes the proof of (iii). 
\end{proof}

\begin{remark}\label{rem: bdy}
{\normalfont
Recalling the definition of the recovery sequence $\tilde{u}_k = u_k - \varphi_k$ below equation \eqref{eq: last g2}, we find $\mathcal{H}^{d-1}(J_{\tilde{u}_k} \setminus \Omega) \to 0$ by \eqref{eq: good approx}(iii) and  \eqref{eq: last g2}, i.e., except for an asymptotically vanishing part, the jump set is contained in $\Omega$. This shows that the surface density  $g'(x,\zeta,\nu)$ for $x \in \partial_D \Omega$ is completely determined by $(g_k)_k$, where $g_k: \Omega \times \R^m_0 \times \mathbb{S}^{d-1} \to \R_+$. \BBB In particular,  it is independent of the choice of $\Omega'$ and of the constant value $c'$ of
$g'_k$ on $\Omega' \setminus \Omega$ as long as $c'>c_5$. \EEE
}
\end{remark}
 
 \begin{remark}\label{rem: bdy2}
 {
 Consider the situation of Theorem  \ref{th: existence} for  $E \in \mathcal{E}'_{\Omega'}$ with densities $f,g$ such that $E(\cdot,A)$ is lower semicontinuous  in $L^0(A;\R^m)$ for all $A \in \mathcal{A}(\Omega')$. Consider the corresponding constant sequence $\tilde{E}'_k$  defined in Lemma \ref{lemma: gamma bdy} with densities \UUU given in \eqref{eq: f ext}-\eqref{eq: g ext}. \EEE Let $f',g'$ be the densities of the $\Gamma$-limit $\tilde{E}'$. One can show that $f(x,\xi) = f'(x,\xi)$  and $g(x,\zeta,\nu) = g'(x,\zeta,\nu)$ for $x\in\Omega$. The surface densities, however, may differ on $\partial_D \Omega$ since $g'$ is completely determined by the restriction of $g$ on $\Omega$ in the first variable, cf.\ Remark \ref{rem: bdy}. Consider, e.g., the densities $f(x,\xi) = c_1|\xi|^p$ and
 $$
 g(x,\zeta,\nu) = \begin{cases} c_5 &  x\in \Omega  \\c_4 & \in \Omega' \setminus \Omega, \end{cases}
 $$ 
 where  $c_4 < c_5$.  Then $g(x,\zeta,\nu) = c_4$ and $g'(x,\zeta,\nu) = c_5$ for $x \in \partial_D \Omega$.
 }
 \end{remark}

We close with a result about convergence of minimizers. 

\begin{theorem}[Convergence of minimizers]\label{th: Gamma existence}
Consider a  sequence of functionals $(\tilde{E}'_k)_k$ and the limiting energy $\tilde{E}'$ given by Lemma \ref{lemma: gamma bdy}, for boundary data $(h_k)_k \subset W^{1,p}(\Omega';\R^m)$ which converge strongly in $W^{1,p}(\Omega';\R^m)$ to $h$. Then 
\begin{align}\label{eq: eps control2}
\inf_{v \in L^0(\Omega';\R^m)} \tilde{E}'_k(v) \  \to \  \min_{v \in L^0(\Omega';\R^m)} \tilde{E}'(v)
\end{align}
 for $k \to \infty$. Moreover, for each sequence $(u_k)_k$ with
\begin{align}\label{eq: eps control}
\tilde{E}'_k(u_k) \le \inf_{v \in L^0(\Omega';\R^m)} \tilde{E}'_k(v) + \eps_k 
\end{align}
for some $\eps_k \to 0$, there exist a subsequence (not relabeled), modifications $(y_k)_k$ satisfying  $\mathcal{L}^d(\lbrace \nabla y_k \neq \nabla u_k \rbrace) \to 0$ as $k \to \infty$, and  $u \in GSBV^p(\Omega';\R^m)$ with  $y_k \to u$ in measure on $\Omega'$ such that 
$$\lim_{k \to \infty} \tilde{E}'_k(y_k) = \lim_{k \to \infty} \tilde{E}'_k(u_k) =  \tilde{E}'(u) = \min_{v \in L^0(\Omega';\R^m)} \tilde{E}'(v).$$
\end{theorem}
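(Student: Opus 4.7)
The plan is to combine the $\Gamma$-convergence of Lemma \ref{lemma: gamma bdy} with the compactness of Theorem \ref{th: comp} in the standard way: a $\limsup$ bound on the infima via recovery sequences, compactness of quasi-minimizers, and the $\liminf$ inequality from $\Gamma$-convergence.

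For the $\limsup$ bound I would first note that $\tilde{E}'$ is lower semicontinuous on $L^0(\Omega';\R^m)$ as a $\Gamma$-limit, and that $\inf \tilde{E}' \le \tilde{E}'(h) < +\infty$ since $h \in W^{1,p}(\Omega';\R^m)$ is admissible. Given $\delta>0$, I would pick a competitor $v_\delta \in GSBV^p(\Omega';\R^m)$ with $v_\delta = h$ on $\Omega' \setminus \overline{\Omega}$ and $\tilde{E}'(v_\delta) \le \inf \tilde{E}' + \delta$, and apply Lemma \ref{lemma: gamma bdy} to obtain a recovery sequence $v_{\delta,k} \to v_\delta$ in measure with $\tilde{E}'_k(v_{\delta,k}) \to \tilde{E}'(v_\delta)$. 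This gives $\limsup_k \inf \tilde{E}'_k \le \inf \tilde{E}' + \delta$, and letting $\delta \to 0$ will yield $\limsup_k \inf \tilde{E}'_k \le \inf \tilde{E}'$.

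For a sequence $(u_k)_k$ satisfying \eqref{eq: eps control}, the previous step gives $\sup_k \tilde{E}'_k(u_k) < \infty$ for $k$ large, and in particular $u_k = h_k$ on $\Omega' \setminus \overline{\Omega}$ with $u_k \in GSBV^p(\Omega';\R^m)$. Since $h_k \to h$ strongly in $W^{1,p}(\Omega';\R^m)$ gives both $L^p$-convergence and equi-integrability of $(|\nabla h_k|^p)_k$, Theorem \ref{th: comp} delivers, along a subsequence, modifications $(y_k)_k$ with $y_k = h_k$ on $\Omega' \setminus \overline{\Omega}$, $\mathcal{L}^d(\{\nabla y_k \neq \nabla u_k\}) \le 1/k$, and $\tilde{E}'_k(y_k) \le \tilde{E}'_k(u_k) + 1/k$, which converge in measure to some $u \in GSBV^p(\Omega';\R^m)$ with $u = h$ on $\Omega' \setminus \overline{\Omega}$. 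Crucially, this $u$ is admissible for $\tilde{E}'$. Applying the $\Gamma$-liminf inequality to $y_k \to u$ in $L^0(\Omega';\R^m)$ then gives
$$\inf \tilde{E}' \le \tilde{E}'(u) \le \liminf_k \tilde{E}'_k(y_k) \le \liminf_k\bigl(\tilde{E}'_k(u_k) + \tfrac{1}{k}\bigr) \le \liminf_k\bigl(\inf \tilde{E}'_k + \eps_k + \tfrac{1}{k}\bigr),$$
which, combined with the $\limsup$ bound, forces all inequalities to be equalities. This simultaneously yields \eqref{eq: eps control2}, the identity $\tilde{E}'(u) = \min \tilde{E}'$, and by squeezing $\lim_k \tilde{E}'_k(y_k) = \lim_k \tilde{E}'_k(u_k) = \tilde{E}'(u)$.

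The main obstacle is the preservation of the Dirichlet datum by the limit $u$: a direct appeal to compactness in measure would not guarantee $u = h$ on $\Omega' \setminus \overline{\Omega}$, so $u$ would not be admissible for $\tilde{E}'$ a priori and the $\liminf$ inequality would produce a value of $\tilde{E}'(u)$ possibly strictly smaller than $\min \tilde{E}'$. This is precisely what is handled by the boundary version of Theorem \ref{th: comp}: the modifications $y_k$ retain the boundary values $h_k$ of $u_k$, so their limit inherits $h$. Without this feature, one could at best identify $u$ as a minimizer of a relaxed problem in which the boundary constraint is weakened to an additional surface contribution on $\partial_D\Omega$.
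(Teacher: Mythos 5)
Your proposal is correct and follows essentially the same route as the paper: apply Theorem \ref{th: comp} to the quasi-minimizing sequence to obtain boundary-preserving modifications $(y_k)_k$ converging in measure to an admissible $u$, use the $\Gamma$-liminf inequality of Lemma \ref{lemma: gamma bdy} on $(y_k)_k$, and close the argument with recovery sequences for arbitrary competitors, exactly as in the paper's version of the fundamental theorem of $\Gamma$-convergence. Your remark on why the boundary-preserving modifications are the crucial ingredient matches the paper's reasoning.
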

 
\begin{proof}
The statement follows in the spirit of  the fundamental theorem of $\Gamma$-convergence, see \cite[Theorem 1.21]{Braides:02}. 
Given $(u_k)_k \subset GSBV^p(\Omega';\R^m)$ satisfying \eqref{eq: eps control}, we apply Theorem \ref{th: comp} on the functionals $(E'_k)_k$ and find \BBB a subsequence (not relabeled), \EEE $(y_k)_k\subset GSBV^p(\Omega';\R^m)$ with $\mathcal{L}^d(\lbrace \nabla y_k \neq \nabla u_k \rbrace) \to 0$ and 
$$ 
 \liminf_{k \to \infty} \tilde{E}'_k(y_k) = \liminf_{k \to \infty} {E}'_k(y_k) \le \liminf_{k \to \infty} {E}'_k(u_k) = \liminf_{k \to \infty} \tilde{E}'_k(u_k) = \liminf_{k \to \infty}  \inf_{v \in L^0(\Omega';\R^m)} \tilde{E}'_k(v).$$
 Here, the first equality holds as $y_k = h_k$ on $\Omega' \setminus \overline{\Omega}$. By Theorem \ref{th: comp} we also get $u \in GSBV^p(\Omega';\R^m)$ satisfying $u = h$ on $\Omega' \setminus \overline{\Omega}$ with $y_k \to u$ in measure on $\Omega'$. Thus, by the $\Gamma$-liminf inequality in Lemma \ref{lemma: gamma bdy} we derive
 \begin{align}\label{eq: last1}
 \tilde{E}'(u) \le  \liminf_{k \to \infty} \tilde{E}'_k(y_k) \le  \liminf_{k \to \infty} \tilde{E}'_k(u_k) \le \liminf_{k \to \infty} \inf_{v \in L^0(\Omega';\R^m)} \tilde{E}'_k(v).\end{align}
 Again by Lemma \ref{lemma: gamma bdy}, for each $w \in L^0(\Omega';\R^m)$ we find a recovery sequence $(w_k)_k$ converging to $w$ in measure satisfying $\lim_{k\to \infty} \tilde{E}_k'(w_k) = \tilde{E}'(w)$.
 \BBB  This  implies
  \begin{align}\label{eq: last2}
\limsup_{k \to \infty}  \inf_{v \in L^0(\Omega';\R^m)} \tilde{E}'_k(v) \le \lim_{k \to \infty}  \tilde{E}'_k(w_k) =  \tilde{E}'(w).
  \end{align}
By combining \eqref{eq: last1}-\eqref{eq: last2} we find 
  \begin{align}\label{eq: last3}
 \tilde{E}'(u) \le \liminf_{k \to \infty} \inf_{v \in L^0(\Omega';\R^m)} \tilde{E}'_k(v) \le  \limsup_{k \to \infty} \inf_{v \in L^0(\Omega';\R^m)} \tilde{E}'_k(v) \le \tilde{E}'(w).
  \end{align}
  Since $w \in L^0(\Omega';\R^m)$ was arbitrary, we get that $u$ is a minimizer of $\tilde{E}'$. The statement follows from \eqref{eq: last1} and \eqref{eq: last3} with $w=u$. In particular, the limit in \eqref{eq: eps control2} does not depend on the specific choice of the subsequence and thus \eqref{eq: eps control2} holds for the whole sequence.  \EEE   
\end{proof}


\end{document}